\newtheorem{theorem}{Theorem}
\newtheorem{lemma}[theorem]{Lemma}
\newtheorem{corollary}[theorem]{Corollary}
\newtheorem{observation}[theorem]{Observation}
\newcommand{\inter}{\text{int}}
\newcommand{\ext}{\text{ext}}
\newcommand{\out}{\text{out}}
\newcommand{\supp}{\text{supp}}
\newcommand{\circul}{\text{circ}}
\newcommand{\dual}[1]{#1^\star}
\newcommand{\gain}[1]{\text{slack}_{#1}}
\newcommand{\PP}{\mathcal{P}}
\newcommand{\CC}{\mathcal{C}}
\newcommand{\XX}{\mathcal{X}}
\newcommand{\RR}{\mathcal{R}}
\title{Coloring near-quadrangulations of the cylinder and the torus\thanks{Research supported by project 17-04611S (Ramsey-like aspects of graph
coloring) of Czech Science Foundation.}}
\author{Zdeněk Dvořák\thanks{Charles University, Prague, Czech Republic.  E-mail: {\tt rakdver@iuuk.mff.cuni.cz}.}\and
Jakub Pek\'arek\thanks{Charles University, Prague, Czech Republic.  E-mail: {\tt pekarej@iuuk.mff.cuni.cz}.}}
\begin{document}
\maketitle

\begin{abstract}
Let $G$ be a simple connected plane graph and let $C_1$ and $C_2$ be cycles in $G$ bounding distinct
faces $f_1$ and $f_2$.  For a positive integer $\ell$, let
$r(\ell)$ denote the number of integers $n$ such that $-\ell\le n\le \ell$, $n$ is divisible by $3$, and $n$ has the same
parity as $\ell$; in particular, $r(4)=1$.  Let $r_{f_1,f_2}(G)=\prod_f r(|f|)$, where the product is over the faces $f$
of $G$ distinct from $f_1$ and $f_2$, and let $q(G)=1+\sum_{f:|f|\neq 4} |f|$, where the sum is over all faces $f$ of $G$.
We give an algorithm with time complexity $O\bigl(r_{f_1,f_2}(G)q(G)|G|\bigr)$ which, given
a $3$-coloring $\psi$ of $C_1\cup C_2$, either finds an extension of $\psi$ to a $3$-coloring of $G$,
or correctly decides no such extension exists.

The algorithm is based on a min-max theorem for a variant of integer 2-commodity flows, and
consequently in the negative case produces an obstruction to the existence of the extension.
As a corollary, we show that every triangle-free graph drawn in the torus with edge-width at least $21$ is $3$-colorable.
\end{abstract}

While it is NP-hard to decide whether a planar graph is $3$-colorable~\cite{garey1979computers},
every planar triangle-free graph is $3$-colorable~\cite{grotzsch1959}.  This fact motivated
the development of a rich theory of $3$-colorability of triangle-free embedded graphs.

Much of our knowledge about colorability of embedded graphs comes from the study of critical graphs that can
be drawn in a given surface.
A graph is \emph{$k$-critical} if its chromatic number is $k$, but every proper subgraph of $G$ is $(k-1)$-colorable.
Clearly, a graph is $(k-1)$-colorable if and only if it does not contain a $k$-critical subgraph.
Gallai~\cite{galfor} gave a lower bound on the density of $k$-critical graphs.  Together with the generalized Euler formula,
this bound implies that there exist only finitely many $4$-critical graphs of girth at least six that can be embedded in
any fixed surface $\Sigma$.  Consequently, it is possible to test $3$-colorability
of a graph of girth at least six embedded in $\Sigma$ by testing the presence of finitely many obstructions.
A deep result of Thomassen~\cite{thomassen-surf} shows this is the case for graphs of girth at least five
as well.  Furthermore, all graphs of girth at least five drawn in the projective plane, the torus~\cite{thom-torus},
or the Klein bottle~\cite{tw-klein} are $3$-colorable.

For triangle-free graphs, the situation is much more complicated.  Already in the projective plane,
there are infinitely many triangle-free $4$-critical graphs---by a result of Gimbel and Thomassen~\cite{gimbel},
these are exactly the non-bipartite quadrangulations of the projective plane not containing separating $4$-cycles.
Substantially generalizing this result, Dvořák, Král' and Thomas~\cite{trfree4} proved that $4$-critical triangle-free
graphs embedded in a fixed surface $\Sigma$ \emph{without non-contractible 4-cycles} are near-quadrangulations,
in the sense that there exists a constant $c_\Sigma$ such that all faces of such graph have length at most $c_\Sigma$
and all but $c_\Sigma$ faces have length~$4$.  A detailed treatment of $4$-critical triangle-free graphs with
non-contractible $4$-cycles was given by Dvořák and Lidický~\cite{cylgen-part3}.

A common theme in almost all mentioned results is the need to deal with the precoloring extension variant of
the problem, where vertices incident with a bounded number of faces are given a fixed coloring and we need to decide
whether this coloring extends to a $3$-coloring of the whole graph.  Indeed, the basis for the results mentioned
in the previous paragraph is the following theorem.
\begin{theorem}[Dvořák, Král' and Thomas~\cite{trfree2}]
Let $G$ be a simple plane graph of girth at least five, with the outer face bounded by a cycle $C$, and let $\psi$ be a $3$-coloring of $C$.
If $\psi$ does not extend to a $3$-coloring of $G$, but extends to a $3$-coloring of every proper subgraph of $G$ which contains $C$,
then $|G|\le 1715|C|$.
\end{theorem}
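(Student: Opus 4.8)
The plan is to argue by induction on $|G|$ about \emph{$C$-critical} plane graphs: those $G$ with outer face bounded by a cycle $C$ and a precoloring $\psi$ of $C$ such that $\psi$ does not extend to a $3$-coloring of $G$, but extends to a $3$-coloring of every proper subgraph of $G$ containing $C$. First I would record the cheap structural consequences of criticality. Every internal vertex has degree at least three, since a vertex $v$ of degree at most two in the interior could be removed (the graph $G-v$ still contains $C$, so $\psi$ extends to it, and at most two colors are forbidden at $v$, a contradiction). If $C$ has a chord, or $G$ contains a short cycle $C'$ separating the interior into two parts, one cuts $G$ along it into two plane graphs $G_1,G_2$ each bounded by a cycle of length at most $|C|$; at least one of them fails to extend the inherited precoloring, hence contains a critical subgraph to which induction applies, and a short computation bounds $|G|$ from the bounds on the pieces. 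So I may assume $C$ is an induced cycle and $G$ has no such separating short cycle; in particular $G$ has girth at least five with all internal faces of length at least five.

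The core of the argument is a list $\RR$ of \emph{reducible configurations} — small subgraphs, or small substructures attached to $C$ — none of which can occur in a critical $G$. Representative members of $\RR$: an internal degree-$3$ vertex all of whose neighbors lie on $C$; a short path through the interior joining two vertices of $C$ and bounding, together with a sub-path of $C$, a region that is ``almost a quadrangulation''; a $5$-face incident with several internal degree-$3$ vertices; and certain configurations of two nearby internal degree-$3$ vertices. Each is shown reducible by the usual repertoire: delete the offending vertices or edges, or identify two suitable vertices, to obtain a smaller graph $G'$ still containing $C$; apply criticality to extend $\psi$ to $G'$; then either the resulting coloring already extends to $G$, or it can be repaired on the deleted or identified part — if necessary by recoloring along a Kempe chain — to yield an extension to $G$, contradicting criticality. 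The delicate reductions are those in which deleting a structure does not leave $C$ as the only constrained part of the graph: there one must instead reason about the set of \emph{all} extensions of $\psi$ across a short cut, which is where girth five and the absence of separating short cycles are used.

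Assuming $G$ is critical with $C$ induced and none of the configurations in $\RR$ present, I would finish by discharging. Assign to each face $f$ the charge $|f|-4$ and to each vertex $v$ the charge $d(v)-4$; since $G$ is a connected plane graph, the total charge is $-8$. Every internal face has charge at least $1$ (girth five), vertices of degree at least four have nonnegative charge, and the only negative charge sits on degree-$3$ vertices (charge $-1$) and on the vertices and the outer face associated with $C$, whose total initial charge is $O(|C|)$. I would then design rules moving charge from the outer face and from $(\ge 5)$-faces onto incident or nearby internal degree-$3$ vertices, with the outer face giving away only $O(1)$ per boundary edge. Using the absence of the configurations of $\RR$, one shows that after discharging every internal vertex and every internal face keeps charge at least a fixed $\delta>0$. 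Since discharging preserves the total charge, $\delta$ times the number of internal vertices and faces is $O(|C|)$, so there are $O(|C|)$ of them, and as girth five forces $|E(G)|\le\frac{5}{3}|V(G)|$, this gives $|G|=O(|C|)$; tracking the constants carefully yields $|G|\le 1715|C|$.

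The main obstacle is exactly the calibration in the last two paragraphs: the list $\RR$ must be rich enough, and the weights in the discharging rules tuned finely enough, that the charge surviving on each internal face and vertex is bounded below by a constant large enough to force the stated coefficient — and establishing this requires a long (though essentially routine) case analysis over the possible local pictures around degree-$3$ vertices and short faces. A secondary difficulty is the bookkeeping in the inductive cutting step, and in those reductions where the precoloring does not behave monotonically under deletion, for which one needs the auxiliary fact that cutting a critical graph along a short cycle produces pieces one of which is again (essentially) critical.
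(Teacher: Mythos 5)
This theorem is not proved in the present paper; it is a black-box citation of Dvořák, Král', and Thomas \cite{trfree2}, so there is no in-paper argument to compare against. Measured against that source, your plan — criticality, a list of reducible configurations, and face/vertex discharging with charges $|f|-4$ and $\deg(v)-4$ summing to $-8$ — is the right framework, and in that sense the approach is aligned with the cited proof. But what you have written is a road map rather than a proof: the entire mathematical content of the statement lives in the specific list $\RR$ of reducible configurations, the exact discharging rules, and the global accounting that yields the coefficient $1715$, none of which is supplied. You explicitly defer these as ``a long (though essentially routine) case analysis''; it is not routine, and the constant cannot be reconstructed from the sketch.

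Two concrete gaps in the parts you do argue. First, the cutting step is stated incorrectly: if $C'$ is a short cycle separating $G$ into pieces $G_1$ (inside) and $G_2$ (outside, containing $C$), it is not true that ``at least one of $G_1,G_2$ fails to extend the inherited precoloring.'' The coloring $\psi$ may extend to $G_2$ in several ways; what you actually know is that no restriction to $C'$ of any such extension extends further into $G_1$. Converting this into an inductive size bound on $G_1$ requires reasoning about the whole set of proper $3$-colorings of $C'$ and which of them extend into each piece — which is where the length bound on $C'$ and a large share of the technical work in \cite{trfree2} come in — not a single inherited precoloring. Second, your concluding inference (total charge $-8$, hence $O(|C|)$ internal vertices) needs a uniform lower bound $\delta>0$ on the charge remaining at every internal vertex and face after discharging; you assert such a $\delta$ exists but give no rule guaranteeing it, and the degree-$3$ vertices you single out start at $-1$, so they must each net at least $1+\delta$, which is exactly where the excluded configurations in $\RR$ must be invoked quantitatively. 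Until the list $\RR$ and the rules are pinned down and that inequality is verified case by case, the argument does not close.
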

Let us remark that by $|G|$ and $\Vert G\Vert$ we denote the number of vertices and edges of $G$, respectively;
the graphs we consider are allowed to have parallel edges and loops, unless they are specified to be simple.
The constant $1715$ was later improved to $37/3$, even in the list coloring setting~\cite{dk}.
Based on this result, if the length of the precolored cycle $C$ is bounded by a constant, then it is possible
to decide whether a precoloring of $C$ extends by testing the presence of finitely many obstructions.
A more involved argument~\cite{dvokawalg} shows that even when the length of $C$ is not bounded, there is a polynomial-time
algorithm for the precoloring extension problem from a cycle in a planar graph of girth at least 5.

For triangle-free graphs, Dvořák, Král' and Thomas~\cite{trfree7} designed a polynomial-time algorithm
to test $3$-colorablity of a triangle-free graph embedded in a fixed surface, and more generally,
to solve the precoloring extension problem with precolored vertices incident with a bounded number of faces
of bounded length.  The algorithm is based on the aforementioned fact that $4$-critical triangle-free graphs without
non-contractible $4$-cycles are near-quadrangulations~\cite{trfree4},
and consequently its key part is an algorithm for the precoloring extension problem in near-quadrangulations~\cite{trfree6}.
However, this algorithm is not practical due to large multiplicative constants.

Dvořák and Lidický~\cite{col8cyc} found a practical algorithm for the precoloring extension problem from a single cycle
in planar near-quadrangulations.  
For a positive integer $\ell$, let $r(\ell)$ denote the number of integers $n$ such that $-\ell\le n\le \ell$, $n$ is divisible
by $3$, and $n$ has the same parity as $\ell$.  For a plane graph $G$ and some of its faces $f_1,\ldots, f_t$, let $r_{f_1,\ldots,f_t}(G)=\prod_f r(|f|)$, where the product is over the faces $f$
of $G$ distinct from $f_1$, \ldots, $f_t$.  Let $q(G)=1+\sum_{f:|f|\neq 4} |f|$, where the sum is over all faces $f$ of $G$.
\begin{theorem}[{Dvořák and Lidický~\cite[Lemma~4 and the discussion following it]{col8cyc}}]\label{thm-onec}
There exists an algorithm which, given a simple connected plane graph $G$, a cycle $C_1$ bounding a face $f_1$ of $G$,
and a $3$-coloring $\psi$ of $C_1$, finds in time $O\bigl(r_{f_1}(G)q(G)|G|\bigr)$ a $3$-coloring of $G$ extending $\psi$ or correctly decides
that no such $3$-coloring exists.
\end{theorem}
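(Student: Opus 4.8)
The plan is to translate the problem, via the classical correspondence between proper $3$-colourings of a plane graph and nowhere-zero $\mathbb{Z}_3$-flows in its dual, into a feasibility question about orientations of $G^\star$, and then to enumerate a bounded number of ``winding'' patterns and test each by a flow computation. We may assume $G$ is $2$-connected, so that $C_1$ is a simple cycle and the boundary cycles of the bounded faces of $G$ span its integral cycle space (the reduction to this case is routine and we suppress it). Colour with $\mathbb{Z}_3$. Every proper $3$-colouring $\varphi$ of $G$ induces an orientation of $G$ in which an edge $uv$ is directed from $u$ to $v$ when $\varphi(v)-\varphi(u)=1$ and from $v$ to $u$ when $\varphi(v)-\varphi(u)=-1$. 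If a bounded face $f$ is traversed counterclockwise and $p_f$ is the number of its boundary edges directed in agreement with the traversal, then, taking each colour increment $\varphi(\mathrm{head})-\varphi(\mathrm{tail})$ along $\partial f$ as its representative in $\{-1,+1\}$, the sum of these $|f|$ increments equals the integer $2p_f-|f|$ whose residue modulo $3$ is the telescoping sum $\sum(\varphi(\mathrm{head})-\varphi(\mathrm{tail}))=0$ in $\mathbb{Z}_3$; hence $3\mid 2p_f-|f|$, so $2p_f-|f|$ takes one of exactly $r(|f|)$ values, and $p_f=2$ is forced when $|f|=4$. Conversely, an orientation of $G$ with $2p_f-|f|\equiv0\pmod3$ at every bounded face $f$ and with the edges of $C_1$ directed as $\psi$ dictates determines, by summing colour increments along paths out of a fixed vertex of $C_1$ (this is well defined by the cycle-space property, and agrees with $\psi$ on $C_1$ since $\psi$ is proper there), a proper $3$-colouring of $G$ extending $\psi$; the two maps are mutually inverse once the colour of one vertex of $C_1$ is fixed. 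So it suffices to decide whether such an orientation exists.

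Identify orientations of $G$ with orientations of $G^\star$; then $p_f$ is the out-degree of the dual vertex $f^\star$. Putting $w(f)=2p_f-|f|$ and $\rho(f)=(|f|+w(f))/2=p_f$, we are asking whether $G^\star$ has an orientation in which each edge incident with $f_1^\star$ is directed as prescribed by $\psi$ and $\mathrm{outdeg}(f^\star)=\rho(f)$ for every other face $f$, where each $w(f)$ with $f\neq f_1$ ranges over its $r(|f|)$ admissible values (just $0$ at $4$-faces). The algorithm enumerates the at most $\prod_{f\neq f_1}r(|f|)=r_{f_1}(G)$ resulting tuples $(\rho(f))_{f\neq f_1}$. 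For each, it must decide an orientation-with-prescribed-out-degrees problem on $G^\star$ with the edges at $f_1^\star$ frozen; this is a standard maximum-flow feasibility question, and in the feasible case it yields the orientation and hence a $3$-colouring of $G$ extending $\psi$.

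To obtain the stated running time we do not run an independent flow for each tuple. We first compute, once, a \emph{reference orientation} $\vec G_0$ of $G$ directing the edges of $C_1$ as $\psi$ dictates and balanced on every $4$-face ($p_f=2$); if none exists, then no extension of $\psi$ exists and we stop. Let $\rho_0$ be its out-degree function on $G^\star$. A target tuple $(\rho(f))$ is realizable if and only if some set of edges of $\vec G_0$ can be reoriented so as to change its out-degree function to $\rho$, i.e.\ if and only if there is an integral flow of value $\tfrac12\sum_f|\rho(f)-\rho_0(f)|$ carrying divergence $\rho-\rho_0$ in the residual ``flip'' network of $\vec G_0$. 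Since $\rho$ and $\rho_0$ agree with common value $|f|/2$ at every $4$-face and agree at $f_1^\star$ (both realize the $\psi$-orientation of $C_1$), this divergence is supported on the faces of length other than $4$ and has total mass $O\bigl(\sum_{|f|\neq4}|f|\bigr)=O(q(G))$. It can therefore be routed, or shown infeasible, using $O(q(G))$ augmenting paths in a planar network with $O(|G|)$ edges, i.e.\ in $O(q(G)|G|)$ time per tuple; adding $O(|G|)$ to extract a colouring once a feasible tuple appears, and summing over the $r_{f_1}(G)$ tuples, gives total time $O\bigl(r_{f_1}(G)q(G)|G|\bigr)$.

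I expect the last step to be the main obstacle: one must verify that a reference orientation balanced on all $4$-faces and respecting $\psi$ on $C_1$ either exists and is computable within budget, or else its non-existence already certifies that $\psi$ has no extension; this entails checking that the edges forced along $C_1$ never over-constrain an incident $4$-face, and that the resulting degree-constrained orientation problem on a near-quadrangulation is always solvable whenever the evident counting conditions hold. One must also justify that the per-tuple flip problem really has $\ell_1$-mass $O(q(G))$ and that each augmentation runs in $O(|G|)$ time in the planar dual. The remaining ingredients — the reduction to the $2$-connected case, and the bijectivity of the colouring/orientation correspondence after pinning one colour — are routine. As a by-product, when every tuple's flow is infeasible, the corresponding minimum cuts (via max-flow/min-cut) furnish a combinatorial obstruction to extending $\psi$, consistent with the min-max viewpoint emphasised in the paper.
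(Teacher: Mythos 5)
Your core strategy---passing via the coloring--orientation correspondence to a $d$-flow feasibility problem in the plane dual $G^\star$, enumerating the $\prod_{f\neq f_1}r(|f|)=r_{f_1}(G)$ admissible tuples of face imbalances (with $p_f=2$ forced on every $4$-face since $r(4)=1$), and solving a unit-capacity flow for each---is sound and is essentially the mechanism this paper itself uses for the two-cycle analogue Theorem~\ref{thm-mainalg}, via Lemma~\ref{tutte-flow}, Observation~\ref{obs-contrflow}, and Corollary~\ref{alg-minmax}. The gap, which you flag yourself, lies in the reference-orientation shortcut. Your per-tuple ``flip flow'' analysis is fine: the two out-degree functions agree at every $4$-face and at $f_1^\star$, so the divergence to be routed has $\ell_1$-mass $O(q(G))$. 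But the cost of the one-time computation of $\vec G_0$ is never bounded, and it is not obviously within budget: a near-quadrangulation can have $\Theta(|G|)$ four-faces, so an arbitrary initial orientation of $G^\star$ may carry total imbalance $\Theta(|G|)$ at those dual vertices, and a flow-based repair would then need $\Theta(|G|)$ augmenting paths, hence $\Theta(|G|^2)$ time---strictly worse than the target $O\bigl(r_{f_1}(G)q(G)|G|\bigr)$, which is $O(|G|)$ precisely in the near-quadrangulation regime the theorem is designed for.

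The fix, used both in the cited Dvořák--Lidický argument and in this paper's Lemma~\ref{lemma-circmax-alg}, is to dispense with the reference orientation and compute the $d$-flow for each tuple from scratch. For a fixed tuple $d$, the flow value is at most $\tfrac12\sum_v|d(v)|$, which is $O(q(G))$ because $d$ vanishes at every $4$-face dual vertex and satisfies $|d(v)|\le\deg_{G^\star}(v)$ elsewhere; hence $O(q(G))$ Ford--Fulkerson augmentations suffice, each taking $O(|G|)$ time. The resulting flow need not have full support, but since $d(v)$ matches the parity of $\deg_{G^\star}(v)$ at every $v$, the complement of its support is Eulerian; decompose it into cycles in linear time and place an arbitrary $\pm1$ circulation on each cycle to obtain a flow with support $E(G^\star)$, i.e.\ a genuine orientation. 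This removes the unanalyzed set-up step while keeping your overall structure, your $O(q(G)|G|)$ per-tuple bound, and your $O\bigl(r_{f_1}(G)q(G)|G|\bigr)$ total. The remaining ingredients of your proposal---the reduction to the $2$-connected case, the bijectivity of the coloring/orientation map after pinning one color on $C_1$, the extraction of a coloring by integrating increments along paths, and the min-cut certificate in the infeasible case---are correct.
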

Note that $r(4)=1$ and $r(\ell)\le \Big\lceil \tfrac{2\ell + 1}{6}\Big\rceil$.  Hence, if $G$ is a near-quadrangulation, 
then the algorithm from Theorem~\ref{thm-onec} has time complexity $O(|G|)$.
The algorithm is practical, only requiring one run of the max-flow algorithm in the plane dual of $G$.
As our main result, we generalize Theorem~\ref{thm-onec} to graphs with two precolored cycles.

\begin{theorem}\label{thm-mainalg}
There exists an algorithm which, given a simple connected plane graph $G$, cycles $C_1$ and $C_2$ bounding distinct faces $f_1$ and $f_2$ of $G$,
and a $3$-coloring $\psi$ of $C_1\cup C_2$, finds in time $O\bigl(r_{f_1,f_2}(G)q(G)|G|\bigr)$ a $3$-coloring of $G$ extending $\psi$ or correctly decides
that no such $3$-coloring exists.
\end{theorem}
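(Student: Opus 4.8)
The plan is to extend the approach behind Theorem~\ref{thm-onec}: reformulate the precoloring extension problem as a flow problem in the plane dual $\dual G$, and replace the single-commodity flow that handles one precolored cycle by a two-commodity flow that simultaneously respects the constraints imposed by $C_1$ and $C_2$.

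First I would set up the (classical) translation between colorings and flows. Fixing a reference orientation of $G$, a proper $3$-coloring $\psi$ assigns to each edge $uv$ the value $\psi(v)-\psi(u)\bmod 3\in\{+1,-1\}$; this is a tension, and under planar duality it becomes a nowhere-zero $\mathbb{Z}_3$-flow on $\dual G$, that is, an integer function $y$ on $E(\dual G)$ with $y(e)\in\{+1,-1\}$ whose boundary $\partial y$ at each vertex of $\dual G$ --- equivalently, around each face of $G$ --- is divisible by $3$. The precoloring of $C_1\cup C_2$ fixes $y$ on the edges incident with $\dual{f_1}$ and $\dual{f_2}$, hence prescribes $\partial y(\dual{f_1})=d_1$ and $\partial y(\dual{f_2})=d_2$ (automatically divisible by $3$, as $\psi$ is proper on $C_1,C_2$). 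For a face $f$ of length $4$ the condition $3\mid\partial y(\dual f)$ forces $\partial y(\dual f)=0$, whereas a face of length $|f|$ admits exactly $r(|f|)$ values; so I would branch over the $\prod_{f\neq f_1,f_2} r(|f|)=r_{f_1,f_2}(G)$ choices of an admissible excess $b(v)$ at each dual vertex other than $\dual{f_1},\dual{f_2}$. Within a branch the task reduces to deciding whether there is $y\colon E(\dual G)\to\{+1,-1\}$ with $\partial y=b$ extending the values prescribed at $\dual{f_1}$ and $\dual{f_2}$; recovering a $3$-coloring of $G$ extending $\psi$ from such a $y$, and conversely, is routine using connectedness of $G$.

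The core of the argument is to solve this constrained flow problem, and this is where the two-commodity structure --- and the main obstacle --- appears. The demand $|y(e)|=1$ on \emph{every} edge (the trace of ``nowhere-zero'') rules out a direct single-commodity treatment once two terminals with independently prescribed boundary data are present: writing $y$ as a fixed reference solution plus a circulation $\gamma$, one must choose $\gamma$ to repair every edge on which the reference value is $0$ without spoiling the others, and the repairs demanded from the $\dual{f_1}$-side and from the $\dual{f_2}$-side interact. I would formalize this as a variant of integer two-commodity flow in the planar graph $\dual G$ --- one commodity per precolored cycle --- for which, exploiting planarity together with the parity constraints inherited from the coloring problem after the excesses $b$ have been fixed, a min--max theorem holds: a feasible $y$ exists unless a certain cut/parity obstruction in $\dual G$ is present, and its proof is algorithmic, reducing to a constant number of single-commodity max-flow computations in $\dual G$ that either output $y$ or exhibit the obstruction. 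Establishing this min--max theorem, and in particular checking that the instances arising here lie in a tractable (planar, Eulerian-after-branching) class rather than falling under the NP-hardness of general integer two-commodity flow, is the step I expect to be hardest.

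Finally I would assemble the algorithm and bound its running time. Degenerate configurations --- $C_1$ and $C_2$ sharing vertices or edges, or a branch forcing $\dual{f_1}$ and $\dual{f_2}$ into the same commodity --- can be reduced to one precolored cycle and disposed of via Theorem~\ref{thm-onec}. Otherwise, $\dual G$ has $O(|G|)$ vertices and edges; in each of the $r_{f_1,f_2}(G)$ branches we run a constant number of single-commodity max-flow computations, each performing $O(q(G))$ augmentations of cost $O(|G|)$, since the total discrepancy to route is $|d_1|+|d_2|+\sum_{v\neq \dual{f_1},\dual{f_2}}|b(v)|=O\bigl(\sum_{f:|f|\neq 4}|f|\bigr)=O(q(G))$ --- note a precolored $4$-cycle contributes discrepancy $0$, matching its exclusion from $q(G)$. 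This gives total time $O\bigl(r_{f_1,f_2}(G)q(G)|G|\bigr)$. If every branch is infeasible, the branch-wise obstructions combine into a single obstruction certifying that $\psi$ does not extend to a $3$-coloring of $G$.
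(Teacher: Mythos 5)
Your high-level roadmap matches the paper's: Tutte's tension--flow correspondence in $\dual G$, branching over the $r_{f_1,f_2}(G)$ admissible assignments of (divisibility-by-$3$) excesses at dual vertices, a min--max theorem for a two-commodity-flavored flow problem solved by a bounded number of single-commodity max-flows, reduction of the non-disjoint case to Theorem~\ref{thm-onec}, and the $O(q(G))$ bound on augmentation count. However, there is a genuine gap at the heart of the argument. You assert that once $y\colon E(\dual G)\to\{\pm1\}$ is found with $\partial y=b$ and the prescribed values on edges incident with $\dual{f_1}$, $\dual{f_2}$, ``recovering a $3$-coloring of $G$ extending $\psi$ \ldots is routine.'' It is not. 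Propagating colors from a vertex $v_1\in V(C_1)$ reproduces $\psi$ on $C_1$, but on $C_2$ it produces $\psi$ shifted by a constant $c\in\{0,1,2\}$, where $c$ is governed by $\int_Q \dual{y}\bmod 3$ along a path $Q$ from $\dual{f_1}$ to $\dual{f_2}$ in $\dual G$. This quantity is \emph{not} determined by the fixed boundary values and excesses --- different feasible $y$ can realize different residues --- and enforcing $c=0$ is precisely the extra global constraint that makes the problem hard (see the paper's Lemma~\ref{tutte-flow}, which carries the hypothesis $\psi(v_2)\equiv\psi(v_1)+\int_Q\dual h\pmod 3$).

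Relatedly, you misattribute the two-commodity structure to the nowhere-zero requirement $|y(e)|=1$. In the paper this is cheap: once the excesses are fixed, the function $d$ on dual vertices is \emph{even} (same parity as degree), and Lemma~\ref{lemma-circmax} shows that any circulation-maximum or -minimum $d$-flow with maximal support already covers all edges, so full support comes for free. The genuinely two-commodity ingredient is the need to pack, with disjoint supports, (i) a $d$-linkage meeting the demands and (ii) an $(s,t)$-circulation winding between the two precolored faces; the size of (ii) controls how much $\int_Q\dual h$ can be varied (in steps of $2$), and hence which residues modulo $3$ are realizable (Theorem~\ref{thm-flowchar} and Corollary~\ref{alg-minmax}). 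The paper computes the achievable interval by two max-flow computations (circulation-max and circulation-min flows) and tests whether it contains the residue $m=(\psi(v_2)-\psi(v_1))\bmod 3$. Your proposal needs this mod-$3$ reachability analysis stated explicitly; as written, the claimed reduction to a plain feasibility problem for $y$ would accept flows whose induced coloring agrees with $\psi$ on $C_1$ but is a nontrivial rotation of $\psi$ on $C_2$.
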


The algorithm is again practical, without large multiplicative constants in the time complexity.
The case of planar graphs with two precolored cycles is relevant to the problem of $3$-coloring graphs drawn in
the torus. Let $G$ be a graph drawn in the torus and let $C$ be a non-contractible cycle in $G$.  Cut the
torus along $C$ and patch the resulting holes by disks, to obtain 
a plane graph $G'$ with distinct faces bounded by cycles $C_1$ and $C_2$ of length $|C|$.
Then, $G$ is $3$-colorable if and only if there exists a $3$-coloring $\psi$ of $C$ such that the corresponding
$3$-coloring of $C_1\cup C_2$ extends to a $3$-coloring of $G'$.  Since there are less than $2^{|C|-2}$
distinct $3$-colorings of $C$ (up to permutation of colors), we obtain the following algorithm for coloring
near-quadrangulations of torus with bounded edge-width.  A drawing of a graph in a surface is \emph{$2$-cell}
if each face is homeomorphic to the open disk, and the \emph{edge-width} of an embedded graph is the
length of a shortest non-contractible cycle in the drawing.

\begin{corollary}\label{cor-torus1}
There exists an algorithm which, given a simple graph $G$ with a $2$-cell drawing in the torus of edge-width at most $k$,
finds in time $O(2^kr(G)(k+q(G))|G|)$ a $3$-coloring of $G$ or correctly decides that no such $3$-coloring exists.
\end{corollary}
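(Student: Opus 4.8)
The plan is to reduce the problem to Theorem~\ref{thm-mainalg} exactly along the lines sketched in the paragraph preceding the statement. Given the simple graph $G$ together with its $2$-cell drawing in the torus, I would first compute a shortest non-contractible cycle $C$ of $G$; since the edge-width of the drawing is at most $k$, we have $|C|\le k$. Cutting the torus along $C$ and patching the two resulting holes by disks produces a plane graph $G'$ (take the outer face to be one of the two patched faces) together with two distinct faces $f_1$ and $f_2$, bounded by disjoint cycles $C_1$ and $C_2$, each obtained from $C$ by relabelling its vertices with one of their two copies. By the discussion preceding the statement, $G$ admits a $3$-coloring if and only if some $3$-coloring $\psi$ of $C$ induces, through the two copies, a $3$-coloring of $C_1\cup C_2$ that extends to a $3$-coloring of $G'$, and any such extension pulls back through the cut to a $3$-coloring of $G$.

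Before invoking Theorem~\ref{thm-mainalg} on $G'$ I would verify its hypotheses. The only non-immediate point is that $G'$ is simple: cutting along $C$ only duplicates the vertices and edges lying on $C$ and identifies no two vertices, so $G'$ has no loop, and a pair of parallel edges of $G'$ would project (by identifying the two copies of each vertex of $C$) to a pair of distinct parallel edges of $G$, which is impossible. Connectivity of $G'$ is standard (for instance, because every face of $G'$ is an open disk: the faces inherited from $G$ are disks with interiors disjoint from $C$, and $f_1,f_2$ are disks by construction). The algorithm then runs the following loop: for each of the fewer than $2^{|C|-2}\le 2^{k-2}$ $3$-colorings $\psi$ of $C$ considered up to permutation of colors, it calls the algorithm of Theorem~\ref{thm-mainalg} on $G'$, $C_1$, $C_2$, and the $3$-coloring of $C_1\cup C_2$ induced by $\psi$. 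If one of these calls returns a coloring of $G'$, the algorithm translates it back to a $3$-coloring of $G$ and outputs it; if all calls fail, it reports that $G$ is not $3$-colorable. Correctness is exactly the equivalence recalled above.

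For the running time I would relate the parameters of $G'$ to those of $G$. By Euler's formula applied to both embeddings, $G'$ has exactly two more faces than $G$, and the cut splits or merges no face of $G$, so the faces of $G'$ are precisely the faces of $G$ together with $f_1$ and $f_2$, with all lengths preserved; hence $r_{f_1,f_2}(G')=\prod_f r(|f|)=r(G)$, where the product ranges over all faces of $G$, and $q(G')\le q(G)+|f_1|+|f_2|\le q(G)+2k$. Also $|G'|=|G|+|C|\le 2|G|$. Thus each call of the algorithm of Theorem~\ref{thm-mainalg} takes time $O\bigl(r_{f_1,f_2}(G')\,q(G')\,|G'|\bigr)=O\bigl(r(G)(q(G)+k)|G|\bigr)$, and there are $O(2^k)$ of them; adding $O(|G|)$ for computing $C$ and performing the cut, together with $O(2^kk)$ for enumerating the colorings of $C$, the total is $O\bigl(2^kr(G)(k+q(G))|G|\bigr)$, as claimed.

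There is no deep obstacle here: the argument is essentially a careful formalization of the reduction already indicated before the statement, and the work lies in the bookkeeping. The two points requiring a little care are the verification that $G'$ is simple and connected, which is what permits Theorem~\ref{thm-mainalg} to be applied, and the identification of the face set of $G'$ with that of $G$ together with $\{f_1,f_2\}$, which is what yields $r_{f_1,f_2}(G')=r(G)$ and the bound on $q(G')$; the only ingredient external to this paper is a sufficiently fast (linear-time, for graphs drawn in the torus) algorithm for computing a shortest non-contractible cycle, which must not dominate the stated bound.
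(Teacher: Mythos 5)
Your proof is correct and follows the same reduction that the paper sketches in the paragraph preceding the corollary: cut along a shortest non-contractible cycle, enumerate the $O(2^k)$ colorings of that cycle up to color permutation, and invoke Theorem~\ref{thm-mainalg} on the resulting plane graph, then account for the fact that $r_{f_1,f_2}(G')=r(G)$, $q(G')\le q(G)+2k$, and $|G'|\le 2|G|$. The only thing you add beyond the paper's sketch is the (worthwhile) verification that the cut graph remains simple and connected, and your remark that one needs a sufficiently fast routine for locating the short non-contractible cycle, which the paper leaves implicit.
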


As we mentioned before, $4$-critical triangle-free graphs without non-contractible $4$-cycles embedded in a fixed
surface are near-quadrangulations~\cite{trfree4}.  Actually, for the torus, the assumption of absence of non-contractible $4$-cycles
can be dropped.  Dvořák and Pekárek~\cite{dvopek} proved the following stronger result regarding toroidal graphs.
For a graph $G$ with a $2$-cell embedding in a surface, let $S(G)$ denote the multiset of the lengths of the faces
of $G$ of length other than $4$.  We call the multisets $\emptyset$, $\{5,5\}$, $\{5,7\}$, $\{5,5,6\}$, and $\{5,5,5,5\}$ \emph{torus-realizable}.
\begin{theorem}[Dvořák and Pekárek~\cite{dvopek}]\label{thm-dvopek}
Let $G$ be a triangle-free graph embedded in the torus.  If $G$ is $4$-critical, then $S(G)$ is torus-realizable, and in particular
$r(G)\le 16$ and $q(G)\le 21$.
\end{theorem}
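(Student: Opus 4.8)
\medskip
\noindent\textbf{Proof proposal.}
The plan is to bootstrap the planar precoloring-extension theory --- the density bound of Dvořák, Král' and Thomas~\cite{trfree2}, later sharpened in~\cite{dk} (cf.\ also Theorem~\ref{thm-onec}) --- up to the torus, using the generalized Euler formula to control the total face-length excess. Recall that a $4$-critical graph is $2$-connected with minimum degree at least~$3$; that the embedding of $G$ may be assumed $2$-cell, since a non-$2$-cell embedding in the torus would make $G$ planar and hence $3$-colorable by Grötzsch's theorem~\cite{grotzsch1959}; and that $G$ is in any case non-planar, so its $2$-cell embedding has a non-contractible cycle, whence $\operatorname{ew}(G)\ge 4$ as $G$ is simple and triangle-free. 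From $|V|-|E|+|F|=0$ and $\sum_f|f|=2|E|$ one obtains the charge identity
\[
\sum_{f}\bigl(|f|-4\bigr)=\sum_{v}\bigl(4-\deg v\bigr),
\]
whose right-hand side is at most the number $n_3$ of degree-$3$ vertices. So the proof splits into (i) showing $G$ is a near-quadrangulation with \emph{explicit} constants, that is, bounding $n_3$ and hence the total excess and the number of non-quadrilateral faces; and (ii) a finite case analysis discarding every resulting face-length multiset that is not torus-realizable.

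\medskip
The base case is immediate: if every face of $G$ has length~$4$ then $G$ is a quadrangulation of the torus, necessarily non-bipartite (a bipartite graph is $2$-colorable, hence not $4$-critical), and $S(G)=\emptyset$ is torus-realizable --- the toroidal analogue of the Gimbel--Thomassen description of projective-planar $4$-critical triangle-free graphs~\cite{gimbel}. So from now on $G$ has a face $f^{\ast}$ with $|f^{\ast}|\ge 5$; note also that $G$ has girth exactly~$4$, since by~\cite{thom-torus} a toroidal graph of girth at least~$5$ is $3$-colorable.

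\medskip
For step~(i) I would split on the edge-width. If $\operatorname{ew}(G)\ge 5$ then $G$ has no non-contractible $4$-cycle, so the near-quadrangulation structure theorem of~\cite{trfree4} applies directly: all faces have bounded length and all but a bounded number are quadrilaterals. If $\operatorname{ew}(G)=4$, cut the torus along a non-contractible $4$-cycle $C$ to obtain a plane graph $G'$ with two new faces bounded by disjoint $4$-cycles $C_1,C_2$; since $G$ is not $3$-colorable, no $3$-coloring of $C$ extends to $G$, so for every $3$-coloring of $C$ the induced precoloring of $C_1\cup C_2$ fails to extend to $G'$, and criticality of $G$ makes $G'$ behave like a critical graph in the cylinder. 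The cylinder structure theory of Dvořák and Lidický~\cite{cylgen-part3}, together with the planar density bound applied after deleting or identifying vertices whose color is forced, then bounds $\Vert G'\Vert$, the total excess, and the number of non-quadrilateral faces. Either way one gets explicit constants, after which a discharging argument --- routing the excess of each large face along short paths to nearby degree-$3$ vertices, using girth~$4$ and $4$-criticality --- cuts the possibilities for $S(G)$ down to finitely many candidates of small total excess.

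\medskip
Step~(ii), the exact classification, is where I expect the real difficulty to lie, and where the specifically \emph{toroidal} hypothesis --- as opposed to an arbitrary fixed surface --- is essential: one must show that among the surviving candidates only $\emptyset$, $\{5,5\}$, $\{5,7\}$, $\{5,5,6\}$ and $\{5,5,5,5\}$ actually occur. The mechanism should be a parity/winding obstruction to $3$-colorability carried by the pentagonal faces: multisets such as $\{6\}$, $\{6,6\}$ or $\{8\}$ place their ``even'' excess in a way that forces either $3$-colorability of $G$ or a strictly smaller $4$-critical subgraph, contradicting criticality, whereas the five listed multisets are exactly those for which the pentagons can be arranged into a genuine non-$3$-colorable toroidal graph. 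In practice this probably requires a torus-specific refinement of the discharging, plus a finite computer check of the small critical configurations that the discharging does not already eliminate. Once $S(G)$ is known to be one of these five multisets, the remaining claims follow by inspection: $q(G)=1+\sum_{|f|\ne 4}|f|$ is maximized at~$21$ (by $\{5,5,5,5\}$), and $r(G)=\prod_{|f|\ne 4}r(|f|)$ is maximized at $r(5)^4=2^{4}=16$ (also by $\{5,5,5,5\}$), using $r(5)=r(7)=2$ and $r(6)=3$.
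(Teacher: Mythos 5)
Theorem~\ref{thm-dvopek} is not proved in this paper; it is quoted from Dvořák and Pekárek~\cite{dvopek}, so there is no internal proof to compare your sketch against. Taking the sketch on its own merits, the closing arithmetic is correct --- from $r(5)=r(7)=2$ and $r(6)=3$, the five torus-realizable multisets give $r(G)\le r(5)^4=16$ and $q(G)\le 1+4\cdot 5=21$, both attained by $\{5,5,5,5\}$ --- but everything leading up to the classification is a plan with real holes rather than a proof.

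Three concrete gaps. First, the Euler identity $\sum_f(|f|-4)=\sum_v(4-\deg v)$ only reduces bounding the total face-length excess to bounding $n_3$, and you nowhere offer a bound on $n_3$ for a $4$-critical triangle-free toroidal graph; that bound is the crux of your step~(i) and is not supplied. Second, the structure theorem of~\cite{trfree4} does not come with the explicit, small constants you invoke: it proves existence of some $c_\Sigma$, and extracting a usable numerical bound from that argument is itself a substantial task, not a citation. Third, and decisively, the actual content of the theorem --- that the only multisets which occur are $\emptyset$, $\{5,5\}$, $\{5,7\}$, $\{5,5,6\}$, $\{5,5,5,5\}$, and in particular that $\{6\}$, $\{8\}$, $\{6,6\}$, $\{5,5,5,7\}$ and the rest are all impossible --- is deferred in your step~(ii) to an unspecified ``torus-specific refinement of the discharging'' plus a computer check. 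That step \emph{is} the theorem; packaging it as a finishing subroutine is exactly the kind of gap that would need the full argument of~\cite{dvopek} to close.
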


The algorithm from Theorem~\ref{thm-mainalg} is based on a min-max theorem for a variant of integer 2-commodity flows, and
consequently in the negative case produces an obstruction to the existence of the extension.  Using Theorem~\ref{thm-dvopek}
and an examination of this obstruction, we prove the following.
\begin{lemma}\label{lemma-ew}
If $G$ is a $4$-critical triangle-free graph drawn in the torus, then the edge-width of the drawing is at most $20$.
\end{lemma}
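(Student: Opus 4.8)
The plan is to argue by contradiction: suppose $G$ is a $4$-critical triangle-free graph drawn in the torus whose drawing has edge-width at least $21$. By Theorem~\ref{thm-dvopek} the multiset $S(G)$ of face lengths different from $4$ is torus-realizable, and each torus-realizable multiset has sum at most $20$ (the maximum, attained by $\{5,5,5,5\}$); hence $\sum_{f:|f|\neq 4}|f|\le 20$, the sum being over the faces of the drawing. Fix a shortest non-contractible cycle $C$, so $|C|$ equals the edge-width and $|C|\ge 21$. A shortest non-contractible cycle is chordless, and in the torus it is non-separating, so cutting the torus along $C$ and capping the two resulting boundary circles with disks produces a simple connected plane graph $G'$ with two distinct faces $f_1,f_2$ bounded by cycles $C_1,C_2$ of length $|C|$, while every other face of $G'$ is a face of the original drawing. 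In particular the faces of $G'$ other than $f_1,f_2$ of length different from $4$ still have total length at most $20$, whereas $|C_1|=|C_2|=|C|\ge 21$.

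Next I would invoke $4$-criticality only through the statement that $G$ is not $3$-colorable. By the cut-and-cap correspondence recalled before Corollary~\ref{cor-torus1}, this means that for \emph{every} $3$-coloring $\psi$ of $C$, the induced $3$-coloring of $C_1\cup C_2$ fails to extend to a $3$-coloring of $G'$. Feeding each pair $(G',\psi)$ into the min-max theorem for the integer $2$-commodity flow variant underlying Theorem~\ref{thm-mainalg} therefore yields, for each $\psi$, a combinatorial obstruction $Q_\psi$ in the plane dual of $G'$. The heart of the proof is to examine these obstructions. The colorings of $C$ are classified, up to permutation of colors, by a winding number $n$ with $|n|\le|C|$, $3\mid n$ and $n\equiv|C|\pmod 2$, so there are $r(|C|)$ of them (a number of order $|C|$), and the value of $Q_\psi$ decomposes into a term that is identical for all quadrilateral faces of $G'$ together with an extra slack contributed by $f_1$, $f_2$ and the faces of length different from $4$. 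For $Q_\psi$ to genuinely block the extension, this slack has to absorb the gap between the geometry of $G'$---where every separator of $f_1$ from $f_2$ corresponds to a non-contractible curve of the original torus and hence has size at least $|C|\ge 21$---and the demand imposed by the winding number $n$.

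I expect that choosing $\psi$ with $|n|$ as small as its parity permits forces $Q_\psi$ to be ``localized'' at the faces of length different from $4$: it can then be realized as a closed curve in the drawing of $G$ that crosses only edges incident with those faces, hence a non-contractible curve of length at most $\sum_{f:|f|\neq 4}|f|\le 20$, contradicting the assumption that the edge-width is at least $21$. (An equivalent formulation: the winding numbers that an obstruction of so little slack can block form a set whose size is controlled by $\sum_{f:|f|\neq 4}|f|$, so once $|C|$ is large enough some admissible $n$ has no obstruction, the matching precoloring of $C_1\cup C_2$ extends to $G'$, and $G$ is $3$-colorable.) It is reassuring that the threshold $20$ is exactly $\max_S\sum_{\ell\in S}\ell$ over torus-realizable multisets $S$, which is what the bookkeeping must hit.

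The main obstacle will be precisely this ``examination of the obstruction'': turning the abstract min-max certificate for the failed $2$-commodity flow into a statement localized at the non-quadrilateral faces, so that its size is bounded by $\sum_{f:|f|\neq 4}|f|$ rather than by $|C|$ or $|G|$. Concretely, one must rule out obstructions that wind around the torus through quadrangulated regions; here the rigidity of $3$-colorings of plane quadrangulations---a coloring propagates across a quadrangulated region leaving only a winding-number degree of freedom---should carry the argument, after which the explicit short list of torus-realizable multisets from Theorem~\ref{thm-dvopek} reduces the remaining near-quadrangulation configurations to a finite check.
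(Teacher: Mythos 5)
Your setup is the right one and matches the paper's: cut $G$ along a shortest non-contractible cycle $C$ with $|C|\ge 21$, apply Theorem~\ref{thm-dvopek} to see that the non-quadrilateral faces have total length at most $20$, and use the min-max obstruction produced by the negative branch of the flow theorem. But the pivotal step --- ``the obstruction can be realized as a non-contractible curve crossing only edges incident with non-quadrilateral faces, hence of length at most $20$'' --- is not how the contradiction is reached, and I don't see how to make it work as stated. The obstruction is a $(C_1,C_2)$-connecting set $X$ of constraints with $\gain{\dual{d},\psi}(X)\le 2$. The constraints in $X$ need not be short and need not avoid quadrangulated regions: a $(C_1,C_2)$-connector by itself has $|E(Q)|\ge|C|\ge 21$, and generalized chords of $C_i$ can traverse long stretches of $4$-faces. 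It is the \emph{slack}, not the length, that is small, and a set of long constraints can have small slack precisely when each constraint separates odd faces with $\dual{d}$-values that align with the coloring's winding contributions. So the claim that small slack forces a short closed curve does not follow; indeed most of the paper's Section~4 is devoted to preventing exactly this confusion.

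What is actually missing is all the machinery that turns ``small slack'' into a contradiction: (i) a careful choice of the assignment $\dual{d}$ to the odd faces (the paper's ``standard assignment'', which assigns opposite signs to pairs of odd faces that are strongly tied to $C_1$ or $C_2$, and whose existence and parity-matching require Lemmas~\ref{lemma-ties} and \ref{lemma-exd}); (ii) a mechanism for choosing $\psi$ so that the sign of $\int_Q\delta_{\vec C,\psi}$ on certain short subpaths $Q$ of $C$ compensates the charge of the tied odd faces --- this is the ``request'' $\RR$ and the notion of a tame coloring, together with Lemma~\ref{lemma-satisfy} which checks slack $\ge 0$ constraint by constraint (separately for cycles, connectors and generalized chords of each possible length) and shows a $(C_1,C_2)$-connecting set of slack $\le 2$ would force a $C_1$--$C_2$ path of length $\le 10<|C|/2$; and (iii) the final rotation argument, in which an explicit tame coloring pattern is rotated around $C$ and one shows that the at most two or three ``kill intervals'' contributed by each odd-face pair cannot cover all $|C|\ge 21$ rotation indices, so some rotated coloring satisfies $\RR$ and hence extends. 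Choosing $\psi$ with $|n|$ minimal, as you propose, is not enough: the proof must additionally control the signs of $\int_Q\delta$ on the bases of short generalized chords, and this sign control is what the rotation argument delivers. A side remark: the $3$-colorings of $C$ up to permutation of colors are not classified by their winding number alone --- the paper bounds their count by $2^{|C|-2}$, not by $r(|C|)$ --- although only the winding number enters the flow condition.
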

Consequently, we have the following.
\begin{corollary}\label{cor-ew}
Every triangle-free graph embedded in the torus with edge-width at least $21$ is $3$-colorable.
\end{corollary}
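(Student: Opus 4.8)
We prove Lemma~\ref{lemma-ew}; Corollary~\ref{cor-ew} then follows at once, since a triangle-free graph $G$ drawn in the torus of edge-width at least $21$ can contain no $4$-critical subgraph (such a subgraph would, by the lemma, contain a non-contractible cycle of length at most $20$, which is then a non-contractible cycle of length at most $20$ in the drawing of $G$), and therefore $G$ is $3$-colorable. The plan for Lemma~\ref{lemma-ew} is to argue by contradiction. Suppose $G$ is a $4$-critical triangle-free graph with a drawing in the torus of edge-width $w\ge 21$, and let $C$ be a shortest non-contractible cycle of the drawing, so $|C|=w$. The drawing is necessarily $2$-cell (otherwise $G$ would be planar and hence $3$-colorable by~\cite{grotzsch1959}, contradicting $4$-criticality), so Theorem~\ref{thm-dvopek} applies: $S(G)$ is torus-realizable, and in particular $q(G)=1+\sum_{f:|f|\neq 4}|f|\le 21$, so the faces of $G$ of length other than $4$ have total length at most $20$, and $r(G)\le 16$. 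Cutting the torus along $C$ and capping the two boundary circles with disks yields a simple connected plane graph $G'$ with distinct faces $f_1$ and $f_2$ bounded by cycles $C_1$ and $C_2$ of length $w$; an Euler characteristic count shows that the remaining faces of $G'$ are exactly the faces of $G$, so $q(G')=q(G)$ and $r_{f_1,f_2}(G')=r(G)$. As in the discussion preceding Corollary~\ref{cor-torus1}, a $3$-coloring of $G$ is the same as a $3$-coloring $\psi$ of $C$ whose induced $3$-coloring of $C_1\cup C_2$ extends to $G'$; since $G$ is not $3$-colorable, no $3$-coloring of $C$ admits such an extension.

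Next I would invoke the min-max theorem underlying Theorem~\ref{thm-mainalg}: for every $3$-coloring $\psi$ of $C$, the failure of the induced coloring of $C_1\cup C_2$ to extend to $G'$ is certified by an obstruction to feasibility of the corresponding integer $2$-commodity flow instance in the plane dual of $G'$. I would then make this obstruction explicit in our setting. Under the standard correspondence sending a $3$-coloring to a $\{+1,-1\}$-valued tension, so that around a face of length $\ell$ the tension values sum to one of $r(\ell)$ admissible quantities, with this sum equal to $0$ whenever $\ell=4$, extending $\psi$ across $G'$ amounts to producing such a tension on $G'$ whose restrictions to $C_1$ and $C_2$ agree with $\psi$; the obstruction is then a dual separation between $f_1$ and $f_2$ whose capacity is the slack available from the non-quadrilateral faces it meets, together with the parity and $\mathbb{Z}_3$-divisibility constraints carried by those faces. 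Consequently the obstruction's capacity is bounded by a function of $\sum_{f:|f|\neq 4}|f|\le 20$ and $r(G)\le 16$, independently of $w$.

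The core of the argument is then to show that a non-contractible cycle of length $w\ge 21$ leaves too much room for an obstruction to exist for every $\psi$. The key point is that $G'$ is a near-quadrangulation, so away from its at most $q(G)$ non-quadrilateral faces the plane dual transmits flow losslessly and without altering winding numbers, while the winding number of $\psi$ along $C$ may be chosen from a set of size $r(w)\ge r(21)=8$. Since the obstruction's capacity is governed by the total defect of the non-quadrilateral faces, which is at most $20<w$, I would argue that there is a choice of this winding number---hence a $3$-coloring $\psi$ of $C$---for which every dual separation between $f_1$ and $f_2$ has slack strictly exceeding the demand imposed by $\psi$, so that no obstruction exists; by the min-max theorem this $\psi$ extends to a $3$-coloring of $G'$, whence $G$ is $3$-colorable, contradicting $4$-criticality. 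Therefore the edge-width is at most $20$.

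I expect the main obstacle to be the preceding step: extracting the precise combinatorial form of the integer $2$-commodity flow obstruction from the proof of Theorem~\ref{thm-mainalg} and showing that it is incompatible with a non-contractible cycle of length exceeding $20$ once the near-quadrangulation bounds of Theorem~\ref{thm-dvopek} are in force. The subtleties are that this is only a \emph{variant} of $2$-commodity flow---the face constraints ask for membership in arithmetic-progression sets rather than plain conservation, so the classical cut condition does not transfer verbatim---and that one must track carefully how winding numbers accumulate across quadrilateral versus non-quadrilateral faces, doing the bookkeeping with $r(\cdot)$ and with the parities of face lengths. That bookkeeping is what will yield the explicit constant $20$ and will also single out which of the at most $16$ relevant boundary colorings to use.
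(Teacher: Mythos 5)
Your deduction of Corollary~\ref{cor-ew} from Lemma~\ref{lemma-ew} is exactly the paper's (one-line) argument, and that part is fine. But the bulk of your proposal is a sketch of Lemma~\ref{lemma-ew}, and there the gap is serious. You frame the problem as ``choose a good winding number for $\psi$ along $C$ out of $r(21)=8$ possibilities so that every dual separation has slack exceeding the demand.'' That is not the right knob, and it glosses over the actual difficulty. First, the obstruction furnished by the min-max theorem (Theorem~\ref{thm-flowchar}, instantiated in Lemma~\ref{lemma-sufficient}) is not a single dual separation with a capacity number: it is a laminar $(C_1,C_2)$-connecting family $X$ of constraints --- generalized chords of $C_1$ or $C_2$, $(C_1,C_2)$-connectors, separating cycles, and non-chord edges --- with $\gain{\dual{d},\psi}(X)\le 2$, and the quantities $\gain{\dual{d},\psi}(R)$ depend not just on the total defect of non-quadrilateral faces but on which side of each constraint those faces sit and on the local values of $\delta_{\vec{C},\psi}$ along the base of each chord. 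Second, the free parameter one actually has to optimize is not just the global winding number of $\psi$ but the entire function $\dual{d}$ (the placement of $\pm 3$ sources and sinks on odd faces) together with the local pattern of $\psi$ along $C$. Getting one of the eight winding-number classes to work for a fixed $\dual{d}$ simply does not follow from $\sum_{f:|f|\ne 4}|f|\le 20 < |C|$; that inequality controls nothing about which constraints become tight.

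What the paper actually does, and what is missing from your sketch, is (i) the notion of a \emph{standard assignment of sources and sinks} together with its existence (Lemma~\ref{lemma-exd}), which carefully pairs off odd faces that are strongly tied to a boundary cycle so short generalized chords have zero net $\dual{d}$ inside them; (ii) the structural facts about ties (Lemmas~\ref{lemma-ties} and~\ref{lemma-bities}), derived from Lemma~\ref{lemma-long} and the minimality of $C$, which pin down exactly which generalized chords can have small slack; (iii) the notion of a \emph{tame} coloring and the reduction (Lemma~\ref{lemma-satisfy}) showing that a tame $\psi$ satisfying the finite list of ``requests'' coming from (ii) must extend; and (iv) an explicit construction of a one-parameter family of tame colorings $\psi_k$ (rotations of a fixed pattern, eqs.~(\ref{col-odd}) and~(\ref{col-even})) and a counting argument over ``killed indices'' to show some $\psi_k$ satisfies all requests. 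Your proposal stops precisely where this machinery begins, and you acknowledge as much in your last paragraph. Without (i)--(iv) the claim that ``there is a choice of winding number for which every obstruction has strictly positive slack'' is unsupported; in particular it is false that the obstruction capacity is ``independent of $w$'' in a usable sense, since a generalized chord can have length comparable to $|C|$ and its base contributes a $\delta$-integral that grows with its length (cf.\ Observation~\ref{lemma-tamevalues}), which is exactly why tameness and the rotation trick are needed.
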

Note that Corollary~\ref{cor-ew} is a special case of a more general result proved (with a much larger bound on the edge-width) in~\cite{trfree4}.
Furthermore, let us remark that the bound on the edge-width can be further
improved. Indeed, in~\cite{dpfuture}, we give a computer-assisted argument showing that a triangle-free graph embedded in the torus with edge-width at least $5$ is $3$-colorable,
unless it contains a specific $13$-vertex $4$-critical quadrangulation discovered in~\cite{thomas2008coloring}
as a subgraph.

The rest of the paper is organized as follows.  In Section~\ref{sec-flow}, we give a min-max result on a variant of integer 2-commodity flows.
In Section~\ref{sec-cyl}, we apply this result to prove Theorem~\ref{thm-mainalg}.  Finally, in Section~\ref{sec-tor}, we prove Lemma~\ref{lemma-ew}.

\section{Flows and disjoint non-contractible cycles}\label{sec-flow}

For a plane graph $H$, let $V(H)$, $E(H)$, and $F(H)$ denote the sets of its vertices, edges, and faces, respectively.
For a function $d:X\to \mathbf{Z}$, let $d(X)=\sum_{x\in X} d(x)$.
Let $H$ be a connected plane graph and let $d:V(H)\to \mathbf{Z}$ be a function such that $d(V(H))=0$.
Throughout the paper, we view paths in graphs as directed, i.e., having distinguished starting and ending vertices.
A \emph{$d$-linkage} in $H$ is a system $\PP$ of pairwise edge-disjoint paths in $H$
such that
\begin{itemize}
\item if a path $P\in\PP$ starts in a vertex $u$ and ends in a vertex $v$, then $d(u)>0$ and $d(v)<0$, and
\item every vertex $v\in V(H)$ is the starting or ending vertex of exactly $|d(v)|$ paths from $\PP$.
\end{itemize}
Let $s$ and $t$ be distinct faces of $H$.  A cycle $C$ in $H$ is \emph{$(s,t)$-non-contractible} if the open disk in the plane bounded by $C$ contains
exactly one of the faces $s$ and $t$, and \emph{$(s,t)$-contractible} otherwise.  An \emph{$(s,t)$-circulation} is a set of pairwise edge-disjoint $(s,t)$-non-contractible cycles.
The \emph{support} of a $d$-linkage or an $(s,t)$-circulation $\XX$ is the set $\supp(\XX)$ of edges of $\bigcup \XX$.
The goal of this section is to answer the following question: Given the graph $H$, its faces $s$ and $t$, and the function $d$ as described,
what is the maximum size of an $(s,t)$-circulation whose support is disjoint from the support of some $d$-linkage in $H$?  We will not be able to give a full answer, however we will solve the special case arising in the context
of Theorem~\ref{thm-mainalg}, where the function $d$ satisfies the condition that $\deg_H(v)$ and $d(v)$ have the same parity for all $v\in V(H)$.

It is more convenient to work in the setting of integral flows.  Let us start by giving the necessary notation and describing
their relationship to linkages and circulations.
Consider an arbitrary orientation $\vec{H}$ of $H$.  For a vertex $v\in V(H)$, let $N_{\vec{H}}^-(v)$ and $N_{\vec{H}}^+(v)$ denote the sets
of edges entering and leaving $v$, respectively. We drop the $\vec{H}$ subscripts when the directed graph $\vec{H}$ is
clear from the context. An (integral) \emph{$d$-flow} in $H$ is a function $h:E\bigl(\vec{H}\bigr)\to\{-1,0,1\}$ such that
$\sum_{e\in N^+(v)} h(e)-\sum_{e\in N^-(v)} h(e)=d(v)$ for all $v\in V(H)$; i.e., $d(v)$ gives the amount of the flow originating in $v$.
The \emph{support} $\supp(h)$ of the flow is the set of edges of $H$ such that $h$ is non-zero on the corresponding directed edge of $\vec{H}$.

Let $\dual{H}$ denote the plane dual of $H$.  For a vertex $v\in V(H)$, a face $f\in F(H)$, or an edge $e\in E(H)$, let
$\dual{v}$, $\dual{f}$, or $\dual{e}$ denote the corresponding face, vertex, or edge of $\dual{H}$, respectively.
For an orientation $\vec{H}$ of $H$ and an edge $e\in E\bigl(\vec{H}\bigr)$, the edge $\dual{e}$ of $\dual{\vec{H}}$
is directed so that, looking along the edge $e$ in its direction, the edge $\dual{e}$ crosses $e$ from left to right.
For a function $d:V(H)\to \mathbf{Z}$ or $h:E\bigl(\vec{H}\bigr)\to\mathbf{Z}$, let $\dual{h}:F(\dual{H})\to \mathbf{Z}$
or $\dual{h}:E\bigl(\dual{\vec{H}}\bigr)\to\mathbf{Z}$ be the corresponding function
such that $\dual{d}(\dual{v})=d(v)$ for every $v\in V(H)$ and $\dual{h}(\dual{e})=h(e)$ for every $e\in E\bigl(\vec{H}\bigr)$.  

Let $G$ be a connected plane graph and let $\vec{G}$ be its orientation.  For a (possibly closed) walk $Q$ in $G$ and an edge
$a\in E\bigl(\vec{G}\bigr)$, let $\sigma(Q,a)$ denote the number of times $Q$ traverses $a$ in the direction of $a$
minus the number of times $Q$ traverses $a$ in the direction opposite to $a$.  In particular, if $G=\dual{H}$,
$\vec{G}=\dual{\vec{H}}$, and $Q$ is a path or a cycle, then for every $e\in E\bigl(\vec{H}\bigr)$, we have
$\sigma(Q,\dual{e})=1$ if $e$ crosses $Q$ from right to left (as seen along the direction of $Q$), $\sigma(Q,\dual{e})=-1$
if $e$ crosses $Q$ from left to right, and $\sigma(Q,\dual{e})=0$ if $e$ does not cross $Q$.
For a function $p:E\bigl(\vec{G}\bigr)\to\mathbf{Z}$, we define $\int_Q p=\sum_{e\in E\bigl(\vec{G}\bigr)} p(e)\sigma(Q,e)$.
Let $\varnothing:V(H)\to \mathbf{Z}$ denote the function whose value is $0$ everywhere.

It is well-known that a flow can be expressed as a disjoint union of cycles and source-sink paths.
Conversely, we can send a unit of flow along each of the paths and in any direction along each of the cycles to obtain
a flow.  Hence, the following claim holds.

\begin{observation}\label{obs-flicir}
Let $H$ be a connected plane graph and let $d:V(H)\to \mathbf{Z}$ be a function such that $d(V(H))=0$.
Let $Q$ be a path in $\dual{H}$ from a vertex $\dual{s}$ to a vertex~$\dual{t}$.
\begin{itemize}
\item If $H$ contains a $d$-flow $h$, then $H$ contains a $d$-linkage $\PP$ with $\supp(\PP)\subseteq \supp(h)$.
If $H$ contains a $d$-linkage $\PP$, then $H$ contains a $d$-flow $h$ with $\supp(h)=\supp(\PP)$.
\item For a non-negative integer $k$, if $H$ contains a $\varnothing$-flow $h$ with $\int_Q \dual{h}=k$, then $H$ contains an $(s,t)$-circulation $\CC$
of size $k$ with $\supp(\CC)\subseteq \supp(h)$.
If $H$ contains an $(s,t)$-circulation $\CC$ of size $k$, then for every integer $k'\in\{-k,-k+2,\ldots,k\}$,
$H$ contains a $\varnothing$-flow $h$ with $\int_Q \dual{h}=k'$ and $\supp(h)=\supp(\CC)$.
\end{itemize}
\end{observation}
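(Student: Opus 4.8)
My plan is to deduce Observation~\ref{obs-flicir} from the classical integral flow-decomposition theorem together with one small topological computation. Given a $d$-flow $h$ in $H$, I would first reverse in $\vec H$ every edge $e$ with $h(e)=-1$; then $e\mapsto|h(e)|$ is a $d$-flow with values in $\{0,1\}$ for the new orientation whose support, viewed as a set of undirected edges, is exactly $\supp(h)$. The flow-decomposition theorem (proved by the standard greedy extraction: trace a maximal edge-simple walk from a vertex $v$ with $d(v)>0$; it either revisits a vertex, exposing a directed cycle to remove, or is a path, and then it can only stop at a vertex with no outgoing edge, which therefore has negative $d$-value; when $d=\varnothing$ one simply extracts directed cycles) writes this $\{0,1\}$-flow as a collection of pairwise edge-disjoint directed paths---each from a vertex $u$ with $d(u)>0$ to a vertex $v$ with $d(v)<0$, and with every vertex $v$ an endpoint of exactly $|d(v)|$ of them---together with a collection of pairwise edge-disjoint directed cycles. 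Undirecting and reading these back in $H$, the paths form a $d$-linkage $\PP$ with $\supp(\PP)\subseteq\supp(h)$, which gives the first implication of the first item; and when $d=\varnothing$ it exhibits $\supp(h)$ as a disjoint union of edge sets of pairwise edge-disjoint cycles of $H$. Conversely, from a $d$-linkage $\PP$ I would put $h(a)=\sigma(P,a)$ when the edge $a\in E(\vec H)$ lies on the (then unique) path $P\in\PP$ through it, and $h(a)=0$ otherwise; this is well defined by edge-disjointness, satisfies $\supp(h)=\supp(\PP)$, and a one-line check shows each path contributes $0$ to the net outflow at each of its internal vertices and $\pm1$ at its two ends, so the net outflow at $v$ is (number of paths of $\PP$ starting at $v$) minus (number ending at $v$), which equals $d(v)$ by the definition of a $d$-linkage.

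The remaining two implications hinge on the following claim: for a cycle $C$ of $H$ with a fixed orientation and the corresponding $\varnothing$-flow $h_C$ (equal to $\pm1$ on $E(C)$ and $0$ elsewhere), $\int_Q\dual{h_C}\in\{-1,0,1\}$, and it is $0$ when $C$ is $(s,t)$-contractible and $\pm1$ when $C$ is $(s,t)$-non-contractible. Since $h_C$ is a $\varnothing$-flow in the connected plane graph $H$, a standard planar-duality argument provides a potential $\pi\colon F(H)\to\mathbf Z$ with $\dual{h_C}=\delta\pi$, i.e.\ $\dual{h_C}(\dual e)=\pi(f_2)-\pi(f_1)$ whenever $\dual e$ runs from $f_1$ to $f_2$. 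Because $\dual{h_C}$ vanishes off $C$, the potential $\pi$ is constant on the faces of $H$ inside the open disk bounded by $C$ and constant on those outside it, and the two values differ by exactly $1$ (the jump of $\pi$ across any edge $e\in E(C)$ equals $h_C(e)=\pm1$). As $\int_Q\dual{h_C}$ telescopes to $\pi(\dual t)-\pi(\dual s)$, and $\dual s,\dual t$ lie in the interiors of the faces $s,t$ and hence off $C$, this integral is $0$ when $s$ and $t$ are on the same side of $C$ and $\pm1$ otherwise; being on opposite sides means that exactly one of $s,t$ is inside the disk bounded by $C$, which is precisely the definition of $C$ being $(s,t)$-non-contractible. (Geometrically, $\int_Q\dual{h_C}$ is just the signed number of crossings of the closed curve $C$ by the path $Q$.)

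Granting the claim, the second item follows. For its first implication, write $h=\sum_i h_{C_i}$, where $C_1,\dots,C_m$ are the suitably oriented pairwise edge-disjoint cycles of $H$ from the decomposition above, so $\supp(h)=\bigcup_i E(C_i)$ and $\int_Q\dual h=\sum_i\int_Q\dual{h_{C_i}}$; by the claim each summand is at most $1$ and vanishes unless $C_i$ is $(s,t)$-non-contractible, so at least $k=\int_Q\dual h$ of the $C_i$ are $(s,t)$-non-contractible, and any $k$ of them form an $(s,t)$-circulation of size $k$ with support contained in $\supp(h)$. For the converse, given an $(s,t)$-circulation $\CC=\{C_1,\dots,C_k\}$, orient each $C_i$ and set $h=\sum_i h_{C_i}$; edge-disjointness makes this a $\varnothing$-flow with values in $\{-1,0,1\}$ and $\supp(h)=\supp(\CC)$, and $\int_Q\dual h$ is a sum of $k$ terms each equal to $\pm1$ by the claim. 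Since reversing a cycle's orientation flips the sign of its term, by choosing the orientations we realize exactly the values $\{-k,-k+2,\dots,k\}$, as required.

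\textbf{Main obstacle.} Apart from the displayed claim about $\int_Q\dual{h_C}$, everything is bookkeeping around the textbook flow-decomposition theorem. That claim is the only genuinely topological input; the cleanest routes to it are the potential computation above or, equivalently, a count of signed crossings, both resting on the standard fact that a function on the edges of a connected plane graph that sums to zero around every face lies in the cut space of that graph, equivalently is the coboundary of a function on its vertices.
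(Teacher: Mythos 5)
Your proposal is correct and takes the same route the paper gestures at: the paper's entire justification is the remark that flows decompose into source-sink paths and cycles and that one can reverse this construction, and your argument is a careful expansion of exactly that, with the topological ingredient (that a cycle $C$ contributes $0$ or $\pm 1$ to $\int_Q \dual{h_C}$ according to whether $C$ is $(s,t)$-contractible or not) made explicit via the dual potential. Nothing is missing; the level of detail is just higher than the paper's, which presents the claim as a bare Observation.
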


Let us now relate the values $\int_{Q_1} \dual{h}$ and $\int_{Q_2} \dual{h}$ for a $d$-flow $h$ in $H$ and distinct paths
$Q_1$ and $Q_2$ in its dual with the same starting and ending vertices.  For a closed walk $R$ in a plane graph $G$ and
a face $f$ of $G$, let $\omega_R(f)$ denote the winding number of $R$ around $f$.
Recall the winding number is defined as follows: Let $p$ be any half-line with the starting point inside $f$ which intersects $G$ only in
edges.  Then $\omega_R(f)$ is equal to the number of times $R$ intersects $p$ from left to right, minus the number of times $R$ intersects $p$ from right to left.
Let us remark that the value of $\omega_R(f)$ is independent of the exact choice of the half-line $p$.
Indeed, the winding number (for $G=\dual{H}$) can also be defined by the following properties.
\begin{observation}\label{obs-wind}
Let $\vec{H}$ be an orientation of a connected plane graph $H$ and let $R$ be a closed walk in $\dual{H}$.
\begin{itemize}
\item[(a)] The outer face $f_0$ of $\dual{H}$ satisfies $\omega_R(f_0)=0$.
\item[(b)] For any edge $e=(u,v)\in E(\vec{H})$, $\omega_R(\dual{u})=\omega_R(\dual{v})+\sigma(R,\dual{e})$.
\end{itemize}
\end{observation}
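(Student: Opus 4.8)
The plan is to check (a) and (b) directly from the half-line definition of $\omega_R$, using only two elementary facts: a generic half-line meets the finite plane graph $\dual{H}$ transversally in finitely many edge-interiors, and a walk in a plane graph never enters the interior of a face. Part (a) is then immediate: take $p$ to start at a point of the outer face $f_0$ of $\dual{H}$ and point away from the (bounded) drawing of $\dual{H}$, so that $p$ is disjoint from $\dual{H}$, hence from $R$; the signed count of crossings is $0$, so $\omega_R(f_0)=0$.

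For (b), fix $e=(u,v)\in E(\vec{H})$ and let $z$ be the interior point of $e$ at which $\dual{e}$ crosses it. The two faces of $\dual{H}$ incident with $\dual{e}$ are exactly $\dual{u}$ and $\dual{v}$, with the sub-arc of $e$ from $z$ to $u$ on the boundary of $\dual{u}$ and the sub-arc from $z$ to $v$ on the boundary of $\dual{v}$. I would choose a point $x\in\inter(\dual{u})$ close to $z$ and a half-line $p$ from $x$ which first crosses $\dual{e}$ transversally near $z$ and thereafter meets $\dual{H}$ only transversally in finitely many edge-interiors --- possible for a generic choice of $x$ and of the direction of $p$. Letting $x'$ be a point of $p$ just past this first crossing, we have $x'\in\inter(\dual{v})$, and the sub-half-line $p'$ of $p$ beginning at $x'$ legitimately computes $\omega_R(\dual{v})$ while $p$ computes $\omega_R(\dual{u})$. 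Since $R$ avoids the interiors of the faces $\dual{u}$ and $\dual{v}$, the only crossings of $R$ with $p$ that are not crossings with $p'$ occur at $z$, one for each traversal of $\dual{e}$ by $R$. Hence $\omega_R(\dual{u})-\omega_R(\dual{v})$ equals the signed number of crossings of $R$ with $p$ at $z$, which is $\pm\sigma(R,\dual{e})$.

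What remains, and the only delicate point, is to fix this sign. I would do so by inspecting the local picture at $z$: with $e$ oriented ``north'', the orientation convention makes $\dual{e}$ point ``east'' at $z$, so that $\dual{u}$ is the face immediately south of $\dual{e}$ and $\dual{v}$ the face immediately north, and $p$ leaves $\dual{u}$ heading north across $\dual{e}$. A traversal of $\dual{e}$ along its own direction then crosses $p$ from left to right as seen along $p$ and contributes $+1$, a traversal against it contributes $-1$; summing over all traversals gives $\omega_R(\dual{u})-\omega_R(\dual{v})=\sigma(R,\dual{e})$, which is (b). The main obstacle is precisely this orientation bookkeeping: the underlying point-set topology is routine, and no homotopy-invariance argument is needed since the two half-lines $p$ and $p'$ are nested.
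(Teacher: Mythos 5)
The paper does not actually prove Observation~\ref{obs-wind}; it is stated as a remark about the half-line definition of the winding number, with verification left to the reader. Your proof fills this gap correctly and in a natural way, working directly from the half-line definition.

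Your part (a) is fine: the definition is already remarked to be independent of the choice of half-line, and a half-line based far in the outer face pointing away from the bounded drawing of $\dual{H}$ misses $R$ entirely.

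Your part (b) is also correct, and the sign-chase is consistent with the paper's conventions. Concretely, take $e=(u,v)$ pointing north, so that $\dual{e}$ points east; $\dual{u}$ is the face of $\dual{H}$ containing the vertex $u$ of $H$ (south of the crossing), $\dual{v}$ the face containing $v$ (north), and your half-line $p$ crosses $\dual{e}$ heading north. A traversal of $\dual{e}$ in its own direction (east) then crosses $p$ from left to right as seen along $p$, contributing $+1$ to the crossing count; a traversal against it contributes $-1$. Summing over traversals gives exactly $\sigma(R,\dual{e})$, and since $R$ lies in the $1$-skeleton of $\dual{H}$ and the sub-segment of $p$ between the base points $x$ and $x'$ meets $\dual{H}$ only in $\dual{e}$, no other crossings are gained or lost. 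Hence $\omega_R(\dual{u})-\omega_R(\dual{v})=\sigma(R,\dual{e})$, matching the paper's sanity-check remark that $\sigma(Q,\dual{e})=1$ precisely when $e$ crosses $Q$ from right to left. The one point I would make explicit in a write-up: the genericity of $p$ (transversal crossings in edge-interiors only, and no crossing of $p$ with two edges at the same point) is what lets you speak of a finite signed crossing count and split it cleanly between $p'$ and the short segment; this is implicit in the paper's definition requiring the half-line to meet $G$ only in edges.
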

For a closed walk $R$ in a plane graph $G$ and a function $b:F(G)\to\mathbf{Z}$,
let us define $$\nabla(R,b)=\sum_{f\in F(G)} \omega_R(f)b(f).$$

\begin{lemma}\label{lemma-flowover}
Let $H$ be a connected plane graph, let $d:V(H)\to \mathbf{Z}$ be a function such that $d(V(H))=0$,
and let $h$ be a $d$-flow in $H$.  For any closed walk $R$ in $\dual{H}$, we have $\int_R \dual{h}=\nabla(R,\dual{d})$,
and in particular $\int_R \dual{h}$ is the same for all $d$-flows.  Furthermore,
if $\dual{s}$ and $\dual{t}$ are vertices of $\dual{H}$, $Q_1$ and $Q_2$ are paths from $\dual{s}$ to $\dual{t}$ in $\dual{H}$, and
$R$ is the closed walk obtained as the concatenation of $Q_1$ with the reversal of $Q_2$,
then $\int_{Q_1} \dual{h}=\int_{Q_2} \dual{h}+\nabla(R,\dual{d})$.
\end{lemma}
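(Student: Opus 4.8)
The plan is to expand both sides of the claimed identity $\int_R \dual{h}=\nabla(R,\dual{d})$ in terms of the edges and vertices of $H$ and then match them edge by edge, using the winding-number recurrence of Observation~\ref{obs-wind}(b) as the only geometric input. Unfolding the definitions, $\int_R \dual{h}=\sum_{e\in E(\vec{H})} h(e)\,\sigma(R,\dual{e})$, and since the faces of $\dual{H}$ are exactly the duals $\dual{v}$ of the vertices $v\in V(H)$, we have $\nabla(R,\dual{d})=\sum_{v\in V(H)}\omega_R(\dual{v})\,d(v)$.

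First I would substitute the flow-conservation constraint $d(v)=\sum_{e\in N^+(v)} h(e)-\sum_{e\in N^-(v)} h(e)$ into $\nabla(R,\dual{d})$ and reorganize the double sum so that it is indexed by edges of $\vec{H}$ rather than by vertices. Each edge $e=(u,v)$ of $\vec{H}$ occurs exactly once as an out-edge at $u$ and once as an in-edge at $v$, so its total contribution is $h(e)\bigl(\omega_R(\dual{u})-\omega_R(\dual{v})\bigr)$. The key step—and the only place geometry enters—is that Observation~\ref{obs-wind}(b) gives precisely $\omega_R(\dual{u})-\omega_R(\dual{v})=\sigma(R,\dual{e})$ for $e=(u,v)$. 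Substituting this turns $\nabla(R,\dual{d})$ into $\sum_{e\in E(\vec{H})} h(e)\,\sigma(R,\dual{e})=\int_R \dual{h}$, which proves the identity; and since the right-hand side $\nabla(R,\dual{d})$ depends only on $R$ and $d$, the value $\int_R \dual{h}$ is the same for every $d$-flow $h$. I do not expect a real obstacle here: the only thing requiring care is bookkeeping the orientation conventions (the side on which $\dual{e}$ crosses $e$, and the sign conventions built into $\sigma(\cdot,\cdot)$ and $\omega_R(\cdot)$) so that Observation~\ref{obs-wind}(b) is applied with the correct sign throughout.

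For the ``furthermore'' part I would first note that when $R$ is the concatenation of $Q_1$ with the reversal of $Q_2$, it is indeed a closed walk since $Q_1$ and $Q_2$ share both their starting vertex $\dual{s}$ and their ending vertex $\dual{t}$. Then, for every edge $a\in E(\dual{\vec{H}})$, the traversals of $R$ along the $Q_1$ part contribute to $\sigma(R,a)$ with their usual sign and those along the reversed $Q_2$ part with the opposite sign, so $\sigma(R,a)=\sigma(Q_1,a)-\sigma(Q_2,a)$; summing against $\dual{h}$ gives $\int_R \dual{h}=\int_{Q_1}\dual{h}-\int_{Q_2}\dual{h}$ by linearity. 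Combining this with the identity $\int_R\dual{h}=\nabla(R,\dual{d})$ already established yields $\int_{Q_1}\dual{h}=\int_{Q_2}\dual{h}+\nabla(R,\dual{d})$, completing the proof.
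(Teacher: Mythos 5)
Your proof is correct and follows essentially the same approach as the paper: both arguments reduce the identity to the winding-number recurrence of Observation~\ref{obs-wind}(b) together with flow conservation, with the only cosmetic difference being that you expand $\nabla(R,\dual{d})$ into an edge-indexed sum while the paper starts from $\int_R \dual{h}$ and regroups into a vertex-indexed sum. The ``furthermore'' part is likewise handled identically, via the observation that $\sigma(R,\cdot)=\sigma(Q_1,\cdot)-\sigma(Q_2,\cdot)$.
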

\begin{proof}
Let $\vec{H}$ be the orientation of $H$ with respect to which the $d$-flow $h$ is defined.
Using Observation~\ref{obs-wind}(b), we have
\begin{align*}
\int_R \dual{h}&=\sum_{e\in E(\vec{H})} \sigma(R,\dual{e})h(e)
=\sum_{e=(v_1,v_2)\in E(\vec{H})} \bigl(\omega_R(\dual{v_1})-\omega_R(\dual{v_2})\bigr)h(e)\\
&=\sum_{v\in V(H)} \omega_R(\dual{v})\Biggl[\sum_{e\in N_{\vec{H}}^+(v)} h(e)-\sum_{e\in N_{\vec{H}}^-(v)} h(e)\Biggr]
=\sum_{v\in V(H)} \omega_R(\dual{v})d(v)\\
&=\sum_{f\in F(\dual{H})} \omega_R(f)\dual{d}(f)=\nabla(R,\dual{d}),
\end{align*}
as required.  Furthermore, if $R$ is the concatenation of $Q_1$ and the reversal of $Q_2$,
then $\int_{Q_1} \dual{h}-\int_{Q_2} \dual{h}=\int_R \dual{h}=\nabla(R,\dual{d})$.
\end{proof}

Let $H$ be a connected plane graph and let $d:V(H)\to \mathbf{Z}$ be a function such that $d(V(H))=0$.
Let $s$ and $t$ be faces of $H$, and let $Q$ be a path in $\dual{H}$ from the vertex $\dual{s}$ to the vertex $\dual{t}$.
A $d$-flow $h$ in $H$ is \emph{$(s,t)$-circulation-maximum} or \emph{$(s,t)$-circulation-minimum} if
$\int_Q \dual{h}$ has the maximum or the minimum possible value, respectively,
among all $d$-flows.  In view of Lemma~\ref{lemma-flowover}, these notions are independent of the choice of the path $Q$.
We now give a min-max condition for circulation-maximum and minimum flows.
A function $d:V(H)\to \mathbf{Z}$ is \emph{even} if $d(v)$ has the same parity as the degree of $v$ for every vertex $v$ of $H$.

\begin{lemma}\label{lemma-circmax}
Let $H$ be a connected plane graph, let $d:V(H)\to \mathbf{Z}$ be a function such that $d(V(H))=0$, and let
$s$ and $t$ be faces of $H$.
Let $h_1$ and $h_2$ be $(s,t)$-circulation-maximum and minimum $d$-flows in $H$, respectively, with maximal supports.
Then there exist paths $Q_1$ and $Q_2$ in $\dual{H}$ from $\dual{s}$ to $\dual{t}$ such that $\int_{Q_1} \dual{h_1}=|E(Q_1)|$ and $\int_{Q_2} \dual{h_2}=-|E(Q_2)|$.
Furthermore $E(H)\setminus \supp(h_1)$ and $E(H)\setminus\supp(h_2)$ are formed by edge-sets of forests in $H$,
and in particular if $d$ is even, then $\supp(h_1)=\supp(h_2)=E(H)$.
\end{lemma}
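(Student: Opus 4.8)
The plan is to reformulate the existence of $Q_1$ as a reachability statement in an auxiliary digraph, and then turn a failure of reachability into a strictly better flow. Fix a path $Q$ in $\dual{H}$ from $\dual{s}$ to $\dual{t}$; by Lemma~\ref{lemma-flowover} the property of being $(s,t)$-circulation-maximum does not depend on this choice. For any path $Q_1$ from $\dual{s}$ to $\dual{t}$ we have $\int_{Q_1}\dual{h_1}=\sum_e h_1(e)\sigma(Q_1,\dual{e})\le |E(Q_1)|$, with equality exactly when $h_1(e)\neq 0$ for every $\dual{e}\in E(Q_1)$ and $Q_1$ traverses each such $\dual{e}$ in the orientation ``agreeing with'' the sign of $h_1(e)$. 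So let $D_1$ be the digraph on $V(\dual{H})$ obtained from $\dual{\vec{H}}$ by keeping the edge $\dual{e}$ with its $\dual{\vec{H}}$-orientation whenever $h_1(e)=1$, keeping $\dual{e}$ with the reversed orientation whenever $h_1(e)=-1$, and deleting $\dual{e}$ whenever $h_1(e)=0$. Then a path $Q_1$ with $\int_{Q_1}\dual{h_1}=|E(Q_1)|$ exists if and only if $\dual{t}$ is reachable from $\dual{s}$ in $D_1$.

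Suppose it is not. Let $A$ be the set of vertices reachable from $\dual{s}$ in $D_1$ and $\bar A=V(\dual{H})\setminus A$, so $\dual{s}\in A$, $\dual{t}\notin A$, and no edge of $D_1$ goes from $A$ to $\bar A$. Unwinding the definition of $D_1$, every edge $e$ of $H$ with $\dual{e}$ directed (in $\dual{\vec{H}}$) from $A$ to $\bar A$ satisfies $h_1(e)\in\{-1,0\}$, and every $e$ with $\dual{e}$ directed from $\bar A$ to $A$ satisfies $h_1(e)\in\{0,1\}$. By planar duality the cut $\partial A$ of $\dual{H}$ (equivalently, the coboundary of the indicator of $A$) corresponds to a $\varnothing$-flow $g$ in $H$ with support $\{e:\dual{e}\in\partial A\}$ such that $g(e)=1$ when $\dual{e}$ is directed from $\bar A$ to $A$ and $g(e)=-1$ when $\dual{e}$ is directed from $A$ to $\bar A$. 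The case analysis above shows $h_1-g$ is $\{-1,0,1\}$-valued, hence a $d$-flow; and since $Q$ runs from $A$ to $\bar A$, a crossing of $Q$ from $A$ to $\bar A$ contributes $-1$ to $\int_Q\dual{g}$ and a crossing from $\bar A$ to $A$ contributes $+1$, with one more of the former, so $\int_Q\dual{g}=-1$ and $\int_Q\dual{(h_1-g)}=\int_Q\dual{h_1}+1$, contradicting circulation-maximality of $h_1$. Thus $Q_1$ exists; applying the same argument to the $(-d)$-flow $-h_2$ (which is $(s,t)$-circulation-maximum and has maximal support among $(-d)$-flows) produces $Q_2$ with $\int_{Q_2}\dual{h_2}=-|E(Q_2)|$.

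For the forest statement, suppose $E(H)\setminus\supp(h_1)$ contains a cycle $C$, and let $g_C$ be a unit circulation around $C$, i.e. a $\varnothing$-flow with $\supp(g_C)=E(C)$ taking values in $\{-1,1\}$. Since $h_1$ vanishes on $E(C)$, both $h_1+g_C$ and $h_1-g_C$ are $d$-flows, and their common support $\supp(h_1)\cup E(C)$ strictly contains $\supp(h_1)$. If $C$ is $(s,t)$-non-contractible then $\dual{s}$ and $\dual{t}$ lie on opposite sides of $C$, so $\int_Q\dual{g_C}=\pm1$; orienting $g_C$ so that this equals $1$ gives $\int_Q\dual{(h_1+g_C)}>\int_Q\dual{h_1}$, a contradiction. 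If $C$ is $(s,t)$-contractible then $\dual{s}$ and $\dual{t}$ lie on the same side of $C$, so $\int_Q\dual{g_C}=0$ and $h_1+g_C$ is again circulation-maximum but has strictly larger support, again a contradiction. Hence $E(H)\setminus\supp(h_1)$ is a forest, and symmetrically so is $E(H)\setminus\supp(h_2)$.

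Finally assume $d$ is even. Reducing the flow equation $\sum_{e\in N^+(v)}h_1(e)-\sum_{e\in N^-(v)}h_1(e)=d(v)$ modulo $2$ and using $h_1(e)\equiv|h_1(e)|\pmod 2$, we obtain that $d(v)$ has the same parity as the degree of $v$ in the subgraph with edge set $\supp(h_1)$; since $d$ is even, this equals the parity of $\deg_H(v)$. Therefore every vertex has even degree in the forest $E(H)\setminus\supp(h_1)$, and since a forest with a nonempty edge set has a vertex of degree one, that forest has no edges, i.e. $\supp(h_1)=E(H)$; the same holds for $h_2$. I expect the delicate part to be the orientation bookkeeping of the second paragraph: realizing $\partial A$ as a genuine $\{-1,0,1\}$-valued $\varnothing$-flow with the stated signs, checking that $h_1-g$ stays feasible, and computing $\int_Q\dual{g}=-1$ — in particular, noticing that it is $h_1-g$, and not $h_1+g$, that is simultaneously feasible and an improvement.
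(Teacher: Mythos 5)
Your proof is correct and follows essentially the same strategy as the paper's: you encode, in an auxiliary digraph, the directions in which dual edges may be traversed while keeping $\int\dual{h_1}$ maximal, and turn a failure of reachability from $\dual{s}$ to $\dual{t}$ into an improving flow, with the forest and even-parity claims handled by the identical cycle-augmentation and degree-parity arguments. The only cosmetic difference is that you work directly in $\dual{H}$ and use the full coboundary of the reachable set $A$ as the improving $\varnothing$-flow, whereas the paper builds the equivalent auxiliary orientation $\vec{F}$ on $V(H)$ and extracts a single $(s,t)$-non-contractible cycle from the boundary of the reachable region.
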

\begin{proof}
We prove the claims for $h_1$; the claims for $h_2$ follow by the same argument applied to the $(t,s)$-circulation-maximum $d$-flow $h_2$.
Let $Q$ be any path from $\dual{s}$ to $\dual{t}$ in $\dual{H}$.

If there existed a cycle $C$ with $E(C)\subseteq E(H)\setminus \supp(h_1)$, then note that there exists a $\varnothing$-flow $c$ with support $E(C)$ 
such that $\int_Q \dual{c}\ge 0$, obtained by sending one unit of flow along $C$ in an appropriate direction.
Then $h_1+c$ is a $d$-flow which contradicts either the assumption
that $h_1$ is $(s,t)$-circulation-maximum (when $\int_Q \dual{c}>0$) or that $\supp(h_1)$ is maximal among $(s,t)$-circulation-maximum flows.
Hence, $E(H)\setminus \supp(h_1)$ is an edge-set of a forest in $H$.  If $d$ is even, then each vertex is incident with
even number of edges of $E(H)\setminus \supp(h_1)$, and thus the forest cannot have any leaf; this is only possible if $E(H)\setminus \supp(h_1)=\emptyset$.

Hence, we only need to find a path $Q_1$ from $\dual{s}$ to $\dual{t}$ such that $\int_{Q_1} \dual{h_1}=|E(Q_1)|$.
Let $\vec{H}$ be the orientation of $H$ with respect to which the $d$-flow $h_1$ is defined.
Let us form an auxiliary directed graph $\vec{F}$ with the vertex set $V(H)$ and edge set defined as follows.
For each edge $e\in E\bigl(\vec{H}\bigr)$,
\begin{itemize}
\item if $h_1(e)=1$, then we include in $\vec{F}$ the reversal of $e$,
\item if $h_1(e)=-1$, then we include in $\vec{F}$ the edge $e$, and
\item if $h_1(e)=0$, then we include both $e$ and its reversal in $\vec{F}$.
\end{itemize}
The graph $\vec{F}$ is drawn in the plane in the same way as $H$, with the two opposite edges in the
last case drawn close next to each other.  Let us remark that $\vec{F}$ is defined so that
sending a unit of flow along any directed cycle in $\vec{F}$ and adding the flow to $h_1$ results in a valid $d$-flow in $H$.

Let $S$ be the set of faces $x\in F(H)$ for which there exists a path $Q_x$ in $\dual{H}$ from $\dual{s}$ to $\dual{x}$
such that all edges of $\vec{F}$ intersect $Q_x$ from left to right.  If $t\in S$, then by the definition of $\vec{F}$
we conclude that $\int_{Q_t} \dual{h_1}=|E(Q_t)|$, and thus we can set $Q_1=Q_t$.

Hence, suppose that $t\not\in S$.  Let $H_S$ be the graph obtained from $H$ by deleting all vertices and edges
that are only incident with faces in $S$, and let $s'$ be the face of $H_S$ containing $s$.  Since $t\not\in S$,
the boundary of the face $s'$ contains an $(s,t)$-non-contractible cycle $C$.  All edges of $C$ separate a face in $S$
from a face not in $S$, and thus all of them are in $\vec{F}$ directed from right to left (looking from $s'$).
Let $c:E\bigl(\vec{H}\bigr)\to \{-1,0,1\}$ be defined as follows: For each edge $e\in E(C)$ corresponding to an edge $e'\in E\bigl(\vec{H}\bigr)$,
we let $c(e')=1$ if $e$ and $e'$ are directed in the same way and $c(e')=-1$ otherwise.  On all other edges $e'\in E\bigl(\vec{H}\bigr)$, we set $c(e)=0$.
Then $c$ is a $\varnothing$-flow with $\int_Q \dual{c}=1$, and $-1\le h_1(e)+c(e)\le 1$ for every $e\in E\bigl(\vec{H}\bigr)$ by the definition of the graph $\vec{F}$.
Hence, $h_1+c$ is a $d$-flow with $\int_Q \dual{(h_1+c)}>\int_Q \dual{h_1}$, contradicting the assumption that $h_1$ is $(s,t)$-circulation-maximum.
\end{proof}

The proof of Lemma~\ref{lemma-circmax} can be easily turned into an efficient algorithm to find the flows.
For a function $d:V(H)\to \mathbf{Z}$, let $|d|$ denote $1+\sum_{v\in V(H)} |d(f)|$.

\begin{lemma}\label{lemma-circmax-alg}
Let $H$ be a connected plane graph, let $d:V(H)\to \mathbf{Z}$ be an even function such that $d(V(H))=0$, and let
$s$ and $t$ be distinct faces of $H$.  There exists an algorithm with time complexity $O(|d|\cdot\Vert H\Vert)$ which
finds $(s,t)$-circulation-maximum and minimum $d$-flows in $H$ with support $E(H)$, or decides that $H$ contains no $d$-flow.
\end{lemma}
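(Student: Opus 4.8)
The plan is to implement the existence test and the two extremal-flow computations by reducing each to a single maximum-flow (equivalently, shortest-path / potential) computation in a planar network of size $O(\|H\|)$, following exactly the constructive content of the proofs of Lemma~\ref{lemma-flowover} and Lemma~\ref{lemma-circmax}.

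First, to decide whether a $d$-flow exists and to produce one, I would build the standard single-source single-sink network: add an auxiliary source $\alpha$ joined to every $v$ with $d(v)>0$ by $d(v)$ parallel unit-capacity arcs, and a sink $\beta$ joined from every $v$ with $d(v)<0$ by $|d(v)|$ parallel unit-capacity arcs, orient $\vec H$ arbitrarily and replace each edge by a pair of opposite unit-capacity arcs. A $d$-flow exists iff the maximum $\alpha$–$\beta$ flow has value $d^+ := \sum_{d(v)>0} d(v)$, and then the flow restricted to $E(\vec H)$ (net flow on each edge pair) is a $d$-flow $h_0$. Since every capacity is a small integer and the total source capacity is at most $|d|$, the Ford--Fulkerson / augmenting-path method runs in $O(|d|\cdot \|H\|)$ time; planarity is not even needed here. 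If the max flow is smaller we report that no $d$-flow exists and stop.

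Next, to obtain the $(s,t)$-circulation-maximum flow, I would start from $h_0$ and push flow along $(s,t)$-non-contractible cycles as long as $\int_Q\dual h$ can be increased. Concretely, following the $\vec F$-construction in the proof of Lemma~\ref{lemma-circmax}: build the residual digraph $\vec F$ of $h_0$ (for each edge of $\vec H$ include the arcs along which one more unit can be pushed), which is again a planar digraph of size $O(\|H\|)$, and search for a directed cycle in $\vec F$ that is $(s,t)$-non-contractible and ``winds the right way'' relative to $Q$. Equivalently, in the plane dual $\dual{\vec F}$ one searches for a directed path from $\dual s$ to $\dual t$ all of whose dual edges cross $\vec F$-arcs from left to right; by the set-$S$ argument in the proof, such a path exists precisely when $h_0$ is not yet $(s,t)$-circulation-maximum, and when it does not exist the current $h_0$ already attains the maximum of $\int_Q\dual h$ (and, since $d$ is even, has support $E(H)$ by Lemma~\ref{lemma-circmax}). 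Each augmentation is a BFS/DFS in a graph of size $O(\|H\|)$ and raises $\int_Q\dual h$ by exactly one; since $\int_Q\dual h = \nabla(Q\text{-type walks},\dual d)$ is bounded in absolute value by $O(|d|)$ (each winding number is an integer and the contributing faces carry total $|\dual d|$-weight $|d|-1$), there are $O(|d|)$ augmentations, for a total of $O(|d|\cdot\|H\|)$. The $(s,t)$-circulation-minimum flow is obtained symmetrically by swapping the roles of $s$ and $t$ (i.e., computing the $(t,s)$-circulation-maximum flow), as noted at the start of the proof of Lemma~\ref{lemma-circmax}. Finally, the ``maximal support'' requirement of Lemma~\ref{lemma-circmax} is automatic once $d$ is even, because then $\supp=E(H)$ is forced; so no extra post-processing is needed and we simply output $h_1$ and $h_2$ together with the certifying dual paths $Q_1,Q_2$ if desired.

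The main obstacle I anticipate is not correctness — that is essentially inherited from Lemma~\ref{lemma-circmax} — but bookkeeping the running time cleanly: one must argue that the number of circulation-augmentation steps is $O(|d|)$ rather than $O(\|H\|)$, which requires the a~priori bound $|\int_Q\dual h|\le |d|$ coming from Lemma~\ref{lemma-flowover} (the value equals $\nabla(R,\dual d)$ and the winding numbers are bounded once one fixes a reference half-line, so only the $|d|-1$ total demand mass contributes). A minor additional point is representing $\vec F$ and its dual, and locating the faces $\dual s,\dual t$, within the stated planar-time budget; this is routine given a combinatorial embedding of $H$. Assembling these pieces gives the claimed $O(|d|\cdot\|H\|)$ algorithm.
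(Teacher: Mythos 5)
Your plan correctly identifies the ingredients of the paper's proof (Ford--Fulkerson to get an initial $d$-flow, the auxiliary residual digraph $\vec F$, the set-$S$ region-growing criterion from the proof of Lemma~\ref{lemma-circmax}, and minimum via swapping $s$ and $t$). However, the time-complexity analysis of the augmentation phase is wrong, and that is precisely the content of this lemma. You claim that the number of augmentations is $O(|d|)$ because $\bigl|\int_Q \dual h\bigr|$ is ``bounded in absolute value by $O(|d|)$''. This is false: for a fixed path $Q$ from $\dual s$ to $\dual t$, the range of $\int_Q \dual h$ over all $d$-flows $h$ has width exactly $2\cdot\circul(H,s,t,d)$ by Theorem~\ref{thm-flowchar}(a), and $\circul(H,s,t,d)$ is controlled by the number of edge-disjoint $(s,t)$-non-contractible cycles, not by $d$. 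Take $d\equiv 0$ (so $|d|=1$) and $H$ a cylindrical grid with $n$ nested concentric $4$-cycles between $s$ and $t$: then $\circul(H,s,t,\varnothing)=n=\Theta(|H|)$. Your argument via Lemma~\ref{lemma-flowover} does not apply here: that lemma fixes a closed walk $R$ and shows $\int_R\dual h$ is flow-independent; it says nothing about bounding $\int_Q\dual h$ for a non-closed path $Q$, and in any case the winding numbers $\omega_R(f)$ appearing in $\nabla(R,\dual d)$ are not $O(1)$ for general closed walks. With one BFS/DFS per unit of increase, your algorithm runs in $\Theta(\Vert H\Vert^2)$ time on such instances.

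The paper avoids this by an amortized implementation that finds all augmenting non-contractible cycles in $O(\Vert H\Vert)$ total, rather than $O(\Vert H\Vert)$ per cycle. The key idea you are missing is to maintain the current ``outer boundary'' $\vec B$ of the grown region $s'$ incrementally: in step (i) the region $s'$ absorbs a neighboring face whenever a boundary edge of $\vec F$ points the wrong way, in step (ii) the boundary is pruned to a cycle, and in step (iii) two units of flow are pushed along that cycle and its $\vec F$-orientations are reversed. The amortization rests on the observations that each edge is inserted into $\vec B$ at most twice (once per incident face of $\vec F$), deleted at most once, and has its orientation flipped at most once; this bounds the entire augmentation loop by $O(\Vert H\Vert)$ and yields the claimed $O(|d|\cdot\Vert H\Vert)$ (dominated by the initial Ford--Fulkerson). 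A separate, smaller issue: you assert that ``no extra post-processing is needed'' for the support condition, but Lemma~\ref{lemma-circmax} only guarantees $\supp=E(H)$ for circulation-maximum flows \emph{with maximal support}; the Ford--Fulkerson output need not have full support, and the paper explicitly pads it by decomposing the (Eulerian, since $d$ is even) complement of the support into cycles and adding $\varnothing$-flows on them before entering the augmentation loop.
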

\begin{proof}
It suffices to find an $(s,t)$-circulation-maximum $d$-flow, an $(s,t)$-circulation-minimum $d$-flow is obtained using the
same algorithm to find a $(t,s)$-circulation-maximum $d$-flow.  We can assume that $t$ is the outer face of $H$.
It is possible to find a $d$-flow $h$ in $H$ (or to decide that no $d$-flow exists) in time $O(|d|\cdot \Vert H\Vert)$
using Ford-Fulkerson algorithm.  Since $d$ is even, the complement of $\supp(h)$ is Eulerian; we can partition
it into cycles in linear time and add an arbitrary $\varnothing$-flow on each of the cycles to $h$.  Hence, from now
on $h$ is a $d$-flow with support $E(H)$.

In time $O(\Vert H\Vert)$ we construct the auxiliary graph $\vec{F}$ as described
in the proof of Lemma~\ref{lemma-circmax} (note that $\vec{F}$ is an orientation of $H$, since
$\supp(h)=E(H)$).  We initialize $s'=s$ and $\vec{B}$ as the subgraph of $\vec{F}$
drawn in the boundary of the face $s'$.  Then, we repeat the following steps:
\begin{itemize}
\item[(i)] While there exists an edge $e$ of $\vec{B}$ directed from left to right as seen from $s'$,
\begin{itemize}
\item if $e$ is a bridge in $\vec{B}$ (i.e., it is incident with $s'$ from both sides), then delete $e$ from $\vec{B}$, otherwise
\item let $W$ be the facial walk of the face of $\vec{F}$ not contained in $s'$, delete $e$ from $\vec{B}$, and add $W-e$ to $\vec{B}$.
\end{itemize}
\item[(ii)] If $s'$ became the outer face of $\vec{B}$, then stop; otherwise, remove from $\vec{B}$ all the edges not incident with its outer face,
and all isolated vertices.
\item[(iii)] Note that $\vec{B}$ became a cycle in the previous step (since all its edges are incident with both $s'$ and the outer face).
Add two units of flow on the edges of the cycle $\vec{B}$ to $H$, and reverse the edges in $\vec{F}$ and $\vec{B}$.
\end{itemize}
Note that the part (i) can be implemented efficiently by maintaining a queue of edges directed from left to right,
updated whenever $W-e$ is added to $\vec{B}$ in step (i) or when the direction of edges are altered in step (iii).
Since each edge is added to $\vec{B}$ at most twice (for each face of $\vec{F}$ it is incident with) and deleted at most once,
the total time complexity of (i) over the whole run of the algorithm is $O(\Vert H\Vert)$.  As for the step (iii),
note that the direction of each edge is changed at most once,
and thus the total time complexity of (iii) over the whole run of the algorithm is also $O(\Vert H\Vert)$.
\end{proof}

Let $H$ be a connected plane graph and let $d:V(H)\to \mathbf{Z}$ be a function such that $d(V(H))=0$.
For a cycle $C$ in $\dual{H}$, we define $\dual{\inter}(C)$ to be the set of faces of $\dual{H}$ contained in the open
disk in the plane bounded by $C$, we let $\inter(C)$ be the set of the corresponding vertices of $H$,
and we let $d(C)=\bigl|\sum_{v\in \inter(C)} d(v)\bigr|$.
Note that if $d$ is even, then $|C|\equiv d(C)\pmod 2$ for every cycle $C$ in $\dual{H}$.
Let $\dual{\ext}(C)=F(\dual{H})\setminus \inter(C)$ and $\ext(C)=V(H)\setminus\inter(C)$.
For an edge $a$ of $\dual{H}$, we define $\inter(a)=\dual{\inter}(a)=\emptyset$, $d(a)=0$, and $|a|=2$
(i.e., $a$ is viewed as a cycle of length two).
For a cycle or edge $x$, we let $\gain{d}(x)=|x|-d(x)$.  
The max-flow min-cut theorem implies the following.
\begin{observation}\label{obs-exflow}
Let $H$ be a connected plane graph and let $d:V(H)\to \mathbf{Z}$ be a function such that $d(V(H))=0$.
Then $H$ contains a $d$-flow (or equivalently, a $d$-linkage) if and only if $\gain{d}(C)\ge 0$ for every cycle $C$ in $\dual{H}$.
\end{observation}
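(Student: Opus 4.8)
The plan is to read off the characterization from a single max-flow computation in an auxiliary network and then translate the resulting cut condition into the dual language via planar duality. By Observation~\ref{obs-flicir} it suffices to characterize when $H$ has a $d$-flow. I would form the network $N$ on vertex set $V(H)\cup\{\sigma,\tau\}$ obtained from $H$ by replacing each edge $uv$ with the two arcs $(u,v),(v,u)$ of capacity $1$, adding an arc $(\sigma,v)$ of capacity $d(v)$ for each $v$ with $d(v)>0$, and an arc $(v,\tau)$ of capacity $-d(v)$ for each $v$ with $d(v)<0$. Since $d(V(H))=0$, both the total capacity out of $\sigma$ and the total capacity into $\tau$ equal $D\colonequals\tfrac12\sum_{v}|d(v)|$. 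A routine verification identifies $d$-flows in $H$ with integral $\sigma$--$\tau$ flows of value $D$ in $N$ (route a unit along $(u,v)$, resp.\ $(v,u)$, when the $d$-flow is $+1$, resp.\ $-1$, on the corresponding arc of $\vec H$, and conversely cancel flow on antiparallel pairs), so by the max-flow min-cut theorem together with the integrality of optimal flows, $H$ has a $d$-flow if and only if every $\sigma$--$\tau$ cut of $N$ has capacity at least $D$. Writing such a cut as $\bigl(\{\sigma\}\cup S,\ \{\tau\}\cup(V(H)\setminus S)\bigr)$ and computing its capacity, the inequality ``capacity $\ge D$'' collapses to $d(S)\le|\partial_H(S)|$, where $\partial_H(S)$ is the set of edges of $H$ with exactly one end in $S$ and $d(S)=\sum_{v\in S}d(v)$; applying this also with $S$ replaced by $V(H)\setminus S$ gives the clean statement that $H$ has a $d$-flow if and only if $|d(S)|\le|\partial_H(S)|$ for every $S\subseteq V(H)$.

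It then remains to show this condition is equivalent to the one in the statement. One direction is immediate: the edges of $H$ crossing a cycle $C$ of $\dual H$ are exactly those dual to the edges of $C$, so $\partial_H(\inter(C))=\{e:\dual e\in E(C)\}$ has size $|C|$ and $|d(\inter(C))|=d(C)$; hence taking $S=\inter(C)$ yields $d(C)\le|C|$, i.e.\ $\gain{d}(C)\ge0$ (the inequality $\gain{d}(a)=2\ge0$ for an edge $a$ being immediate). For the converse I would assume $\gain{d}(C)\ge0$ for every cycle $C$ of $\dual H$ and prove $|d(S)|\le|\partial_H(S)|$ for all $S$ by induction on $|\partial_H(S)|$. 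When $\partial_H(S)=\emptyset$, connectivity of $H$ forces $S\in\{\emptyset,V(H)\}$ and $d(S)=0$. Otherwise the set $\{\dual e:e\in\partial_H(S)\}$ — the edges of $\dual H$ separating the faces $\dual v$ with $v\in S$ from those with $v\notin S$ — is a nonempty even subgraph of $\dual H$, so it contains a cycle; among all cycles $C$ contained in it, pick one minimizing $|\dual{\inter}(C)|$. The crux is that this minimality prevents any edge of $\{\dual e:e\in\partial_H(S)\}$ from lying strictly inside $C$, and consequently all faces in $\dual{\inter}(C)$ lie on the same side of the partition, say $\inter(C)\subseteq S$ (the opposite case being symmetric). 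Then $S'\colonequals S\setminus\inter(C)$ satisfies $\partial_H(S')=\partial_H(S)\setminus\{e:\dual e\in E(C)\}$, whence $|\partial_H(S')|=|\partial_H(S)|-|C|$, while $d(S')=d(S)-\sum_{v\in\inter(C)}d(v)$ with $\bigl|\sum_{v\in\inter(C)}d(v)\bigr|=d(C)$. The induction hypothesis applied to $S'$ and the inequality $\gain{d}(C)\ge0$ then give $|d(S)|\le|d(S')|+d(C)\le\bigl(|\partial_H(S)|-|C|\bigr)+|C|=|\partial_H(S)|$, closing the induction.

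The two verification steps — the bijection between $d$-flows and value-$D$ flows, and the cut-capacity computation — are mechanical and I would only spell them out briefly. The step I expect to require genuine care is the claim at the heart of the converse: that a cycle $C$ in $\{\dual e:e\in\partial_H(S)\}$ minimizing $|\dual{\inter}(C)|$ has no edge of that subgraph in its interior, so that $\dual{\inter}(C)$ is monochromatic with respect to $(S,V(H)\setminus S)$. Here planarity of $H$ is used decisively; the argument is a short case analysis on a hypothetical interior edge — whether its component in the even subgraph meets $C$ at all, and if so whether the cycle through it stays inside the disk bounded by $C$ or returns to $C$ — each case producing a cycle in the subgraph enclosing strictly fewer faces than $C$, a contradiction.
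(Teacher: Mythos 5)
The paper offers no proof of this observation beyond the one-line remark that it follows from the max-flow min-cut theorem, so you are filling in an argument, and the route you take -- reduce to a classical source-sink max-flow instance, translate the cut condition ``\(|d(S)|\le|\partial_H(S)|\) for all \(S\)'' into a cycle condition in \(\dual H\) via planar duality -- is exactly the route the authors are gesturing at. The forward reduction, the bijection between \(d\)-flows and value-\(D\) integral flows in \(N\), the cut-capacity computation, and the induction scheme in the converse are all correct.

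The one place where I would push you for more detail is the ``crux'' you yourself flag: that a cycle \(C\) of the Eulerian subgraph \(E'=\{\dual e:e\in\partial_H(S)\}\) minimizing \(|\dual{\inter}(C)|\) has no edge of \(E'\) strictly inside. Your sketch (``take the cycle through the interior edge \(a\); if it stays in the disk we are done, if it returns to \(C\) split the disk along the arc through \(a\)'') works cleanly only when that auxiliary cycle is internally disjoint from \(C\); if it shares vertices or edges with \(C\), the arc through \(a\) need not together with an arc of \(C\) form a simple cycle, and the case analysis as stated does not close. The claim is nonetheless true, and there is a cleaner way to see it: since \(E'\) is Eulerian it is bridgeless, so every bounded face of \(\overline{E'}\) is bounded by cycles of \(E'\). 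Taking \(f\) to be the face of \(\overline{E'}\) incident with \(a\) and contained in the open disk \(D\) bounded by \(C\), one of the boundary cycles of \(f\) through \(a\) lies in \(\overline D\), is distinct from \(C\) (as \(a\) is not on \(C\)), and has \(\dual{\inter}\) a proper subset of \(\dual{\inter}(C)\), contradicting minimality. With that local repair, your argument is complete and is, as far as the paper reveals, the intended one.
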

If $\gain{d}(C)\ge 0$ for every cycle $C$ in $\dual{H}$, then we say that the function $d$ is \emph{feasible}.
For a set $X$ of cycles and edges of $\dual{H}$, we let $\gain{d}(X)=\sum_{x\in X} \gain{d}(x)$.
  For an edge $e\in E(\dual{H})$ contained in $a$ cycles and $b$ edges from $X$, we define $m(X,e)=a+2b$.
If $s$ and $t$ are faces of $H$, we say that $X$ is \emph{$(s,t)$-connecting}
if $\bigcup X$ contains a path from $\dual{s}$ to $\dual{t}$.  We say that $X$ is \emph{laminar}
if for any cycles $C_1,C_2\in X$, the open disks in the plane bounded by $C_1$ and $C_2$
are either disjoint or one is a subset of the other.

If $d$ is feasible, then let $\circul(H,s,t,d)$ denote the maximum integer $k$ such that 
there exists a $d$-linkage and an $(s,t)$-circulation of size $k$ in $H$ with disjoint supports.  We are now ready to prove the main
result of this section, the min-max theorem for $\circul(H,s,t,d)$.  The argument used to prove the part (a) is based on the idea
of Seymour~\cite{twocom} for $2$-commodity flows.

\begin{theorem}\label{thm-flowchar}
Let $H$ be a connected plane graph, let $s$ and $t$ be faces of $H$, and let $d:V(H)\to \mathbf{Z}$ be a feasible even function such that
$d(V(H))=0$.
Let $h_1$ and $h_2$ be $(s,t)$-circulation-maximum and minimum $d$-flows in $H$, respectively.
Then the following claims hold.
\begin{itemize}
\item[(a)] For every path $Q$ from $\dual{s}$ to $\dual{t}$ in $\dual{H}$, $\circul(H,s,t,d)=\frac{1}{2}\Bigl(\int_Q \dual{h_1}-\int_Q \dual{h_2}\Bigr)$.
\item[(b)] $\gain{d}(X)\ge 2\cdot\circul(H,s,t,d)$ for every $(s,t)$-connecting set $X$ of cycles and edges of $\dual{H}$.
\item[(c)] There exists a laminar $(s,t)$-connecting set $X$ of cycles and edges of $\dual{H}$
such that $\gain{d}(X)=2\cdot\circul(H,s,t,d)$.
\end{itemize}
Furthermore, there exists an algorithm with time complexity $O(|d|\cdot\Vert H\Vert)$ which given $H$, $s$, $t$, and $d$ returns
\begin{itemize}
\item a $d$-linkage and an $(s,t)$-circulation of size $\circul(H,s,t,d)$ in $H$ with disjoint supports, and
\item a laminar $(s,t)$-connecting set $X$ of cycles and edges of $\dual{H}$ such that $\gain{d}(X)=2\cdot\circul(H,s,t,d)$.
\end{itemize}
\end{theorem}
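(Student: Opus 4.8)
The plan is to prove the three parts roughly in the order (a) upper bound side, then (b), then (c) with its matching lower bound, bootstrapping each off the previous one, and finally to read off the algorithm from Lemma~\ref{lemma-circmax-alg} together with the explicit obstruction construction in the proof of (c).

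\medskip
\emph{Part (a), the ``$\le$'' direction, and the easy half of (b).}
First I would fix a path $Q$ from $\dual s$ to $\dual t$ and observe, via Observation~\ref{obs-flicir}, that an $(s,t)$-circulation $\CC$ of size $k$ with support disjoint from that of a $d$-linkage $\PP$ yields a $d$-flow $h$ with $\supp(h)=\supp(\PP)$ and a $\varnothing$-flow $c$ with $\supp(c)=\supp(\CC)$ and $\int_Q\dual c=\pm k$; then $h+c$ is a $d$-flow (the supports are disjoint so the sum stays in $\{-1,0,1\}$), so $\int_Q\dual{(h+c)}=\int_Q\dual h\pm k$. Pushing this both ways shows $\int_Q\dual{h_1}-\int_Q\dual{h_2}\ge 2k$, i.e. $\circul(H,s,t,d)\le\frac12(\int_Q\dual{h_1}-\int_Q\dual{h_2})$, independent of $Q$ by Lemma~\ref{lemma-flowover}. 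For (b): given an $(s,t)$-connecting family $X$, the edge-cut weights $m(X,\cdot)$ give a capacity function; any cycle in the support of the circulation or any source--sink path in the linkage must, by a parity/crossing count, use up at least $2$ units of $\gain{d}$-budget along $X$ (a non-contractible cycle crosses the $\dual s$--$\dual t$ path in $X$ an odd number of times net, hence at least twice counting the length-two "edge" cycles with weight $2$), giving $\gain{d}(X)\ge 2\cdot\circul(H,s,t,d)$; this is the routine menger/max-flow-min-cut counting once the capacities are set up correctly.

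\medskip
\emph{Part (c) and the ``$\ge$'' direction of (a) --- the heart of the argument.}
Here I would invoke Lemma~\ref{lemma-circmax}: since $d$ is even, $h_1$ and $h_2$ have full support $E(H)$, and there are paths $Q_1,Q_2$ from $\dual s$ to $\dual t$ with $\int_{Q_1}\dual{h_1}=|E(Q_1)|$ and $\int_{Q_2}\dual{h_2}=-|E(Q_2)|$, i.e. every edge of $Q_1$ is crossed by $h_1$ left-to-right and every edge of $Q_2$ right-to-left. Consider $g=h_1-h_2$; this is a $\varnothing$-flow (a difference of $d$-flows) and $\int_Q\dual g=\int_Q\dual{h_1}-\int_Q\dual{h_2}=2\cdot\circul$ by the ``$\le$'' direction combined with what we are about to show. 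Decompose $g$ (after halving, using evenness-type parity, or working with $h_1-h_2$ directly as an Eulerian subgraph of the symmetric difference) into $(s,t)$-non-contractible cycles; this produces an $(s,t)$-circulation of size $\circul$ --- but we also need a disjoint $d$-linkage, so I would instead extract the circulation from inside $\supp(h_1)\cap\supp(h_2)$ in a way that leaves enough room. The cleaner route, following Seymour, is to take the ``potential'' function on faces given by the winding numbers of $Q_1$ relative to $Q_2$ (equivalently, for each face $f$ of $H$ record $\phi(f)=$ the $h_1$-value minus $h_2$-value pattern around it), and let the sets in $X$ be the level cycles / boundaries of superlevel sets of $\phi$. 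Laminarity is automatic for level sets of a single function; $(s,t)$-connectedness holds because $\phi$ takes different "values" on $s$ and $t$ (that difference is exactly $\circul$); and the key computation is that $\gain{d}(X)=\sum(|C|-d(C))$ telescopes against the edges where $h_1$ and $h_2$ disagree, giving exactly $\int_{Q_1}\dual{h_1}-\int_{Q_2}\dual{h_2}=2\cdot\circul$. Simultaneously, the edges where $h_1=h_2$ carry a common sub-$d$-flow whose support is disjoint from $\supp(h_1-h_2)$; by Observation~\ref{obs-flicir} this common flow gives the required $d$-linkage, and the cycles of $h_1-h_2$ give the circulation of size $\circul$, with disjoint supports by construction. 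This closes the ``$\ge$'' direction of (a) and proves both (b)-tightness and (c).

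\medskip
\emph{The main obstacle and the algorithm.}
The hard part, as in all two-commodity-flow min--max results, is part (c): constructing a single laminar $(s,t)$-connecting obstruction $X$ whose total $\gain{d}$-value is exactly $2\cdot\circul$, rather than merely producing a fractional or non-laminar certificate. The delicate point is the parity bookkeeping --- one must use evenness of $d$ (hence $|C|\equiv d(C)\pmod 2$ for every dual cycle, and full support of $h_1,h_2$) to guarantee that the level-set construction yields genuine cycles (not just closed walks) and that the telescoping sum lands on the integer $2\cdot\circul$ with no slack, and to split $h_1-h_2$ off from the common flow cleanly; I expect this to require a careful choice of which superlevel boundaries to include when $\phi$ has plateaus. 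Once (c) is in place, the algorithm is immediate: run the algorithm of Lemma~\ref{lemma-circmax-alg} in time $O(|d|\cdot\Vert H\Vert)$ to get $h_1$ and $h_2$ with full support together with the certifying paths $Q_1,Q_2$; extract the common sub-flow and decompose $h_1-h_2$ into non-contractible cycles in linear time (Observation~\ref{obs-flicir}); and read off $X$ as the family of boundaries of the superlevel sets of the winding-number potential, which can be traced in $O(\Vert H\Vert)$ time. All pieces together stay within $O(|d|\cdot\Vert H\Vert)$.
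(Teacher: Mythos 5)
Your overall route matches the paper's: the $\le$ direction of (a) by combining a linkage and a circulation with disjoint supports into $d$-flows sandwiched between $h_1$ and $h_2$; the $\ge$ direction by taking $h_\pm=(h_1\pm h_2)/2$, which exploits evenness to give integer-valued flows with disjoint supports; and (c) via a winding-number potential on faces of $\dual H$ and its level cycles, in the style of Seymour. The $h_\pm$ split is exactly the paper's argument for the lower bound. Your reading of (b) as a crossing count is the right spirit, though the precise mechanism in the paper is to form the multigraph $F$ with edge multiplicities $m(X,\cdot)$, observe that $F$ has all even degrees and hence contains two edge-disjoint $\dual s$--$\dual t$ paths, and conclude that every $(s,t)$-non-contractible cycle must meet at least two edges of $F$, forcing $\gain{d}(X)\ge 2\cdot\circul(H,s,t,d)$; the ``odd net crossing hence at least two'' gesture in your write-up does not by itself give the factor $2$.

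There is, however, a genuine gap in your treatment of (c). You propose that $X$ consist solely of the level cycles of the winding-number potential and argue $(s,t)$-connectedness from the fact that the potential differs at $\dual s$ and $\dual t$. This fails whenever $Q_1$ and $Q_2$ traverse a common edge $e$ in the same direction: such an edge separates two faces of $\dual H$ with equal winding number and therefore lies on no level cycle, yet it still has to be accounted for. In the extreme case $Q_1=Q_2$ (possible when $h_1=-h_2$ on a path), the winding number is identically zero, so your $X$ would be empty and hence not $(s,t)$-connecting, while the correct value is $\circul(H,s,t,d)=|E(Q_1)|>0$. The paper repairs this by including in $X$, alongside the level cycles, the set $A$ of edges of $\dual H$ through which both $Q_1$ and $Q_2$ pass in the same direction, each contributing slack $2$. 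With $A$ included one has $\bigcup X = Q_1\cup Q_2$, making $(s,t)$-connectedness automatic, and the identity $|E(Q_1)|+|E(Q_2)|=2|A|+\sum_i|L_i|$ is what drives the exact slack computation $\gain{d}(X)=2\cdot\circul(H,s,t,d)$. Without $A$, neither the connectedness nor the equality goes through, and your ``telescoping against edges where $h_1\ne h_2$'' heuristic does not substitute, because the level cycles live on $Q_1\cup Q_2$ rather than on $\supp(h_1-h_2)$.
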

\begin{proof}
Suppose that $\PP$ is a $d$-linkage and $\CC$ is an $(s,t)$-circulation in $H$ such that $\supp(\PP)\cap\supp(\CC)=\emptyset$
and $|\CC|$ is maximum, i.e., equal to $\circul(H,s,t,d)$.
For any cycle $K$ in $\dual{H}$, the set $\{e:\dual{e}\in E(K)\}$ forms an edge-cut in $H$ separating $\inter(K)$ from $\ext(K)$.
Clearly, $\PP$ contains at least $d(K)$ paths with one end in $\inter(K)$ and the other hand in $\ext(K)$, and thus
all but at most $\gain{d}(K)$ edges of $K$ are intersected by paths in $\PP$.

Consider now an $(s,t)$-connecting set $X$ of cycles and edges of $\dual{H}$.  Let $F$ be the multigraph obtained from
$\bigcup X$ by giving each edge $e$ the multiplicity $m(X, e)$ (i.e., when constructing $F$, the cycles of $X$ are made edge-disjoint and edges of $X$ are turned into $2$-cycles
by increasing the multiplicity of edges).  According to the previous paragraph, all but at most $\sum_{x\in X} \gain{d}(x)=\gain{d}(X)$ edges of $F$ are intersected by paths in $\PP$.
All vertices of $F$ have even degree, and thus each component of $F$ is $2$-edge-connected.  Since $X$ is $(s,t)$-connecting, it follows that $F$ contains
two edge-disjoint paths from $\dual{s}$ to $\dual{t}$, and thus every $(s,t)$-non-contractible cycle in $H$ intersects at least two edges of $F$.
Since the supports of $\PP$ and $\CC$ are edge-disjoint, we conclude that $\gain{d}(X)\ge 2|\CC|=2\cdot\circul(H,s,t,d)$.  Therefore, (b) holds.

Let $Q$ be any path from $\dual{s}$ to $\dual{t}$ in $\dual{H}$.
By Observation~\ref{obs-flicir}, there exists a $d$-flow $h_d$ and a $\varnothing$-flow $h_c$ in $H$ with $\supp(h_d)=\supp(\PP)$,
$\supp(h_c)=\supp(\CC)$, and $\int_Q \dual{h_c}=|\CC|$.  In particular, the supports of $h_d$ and $h_c$ are disjoint, and thus
$h_a=h_d+h_c$ and $h_b=h_d-h_c$ are $d$-flows in $H$.  Since $h_1$ and $h_2$ are $(s,t)$-circulation-maximum and minimum, respectively,
we have $\int_Q \dual{h_a}\le \int_Q \dual{h_1}$ and $\int_Q \dual{h_b}\ge \int_Q \dual{h_2}$.  Consequently,
\begin{align*}
\circul(H,s,t,d)&=|\CC|=\int_Q \dual{h_c}=\int_Q \frac{1}{2}(\dual{h_a}-\dual{h_b})\\
&=\frac{1}{2}\Biggl(\int_Q \dual{h_a}-\int_Q \dual{h_b}\Biggr)\le \frac{1}{2}\Biggl(\int_Q \dual{h_1}-\int_Q \dual{h_2}\Biggr).
\end{align*}
On the other hand, by Lemma~\ref{lemma-circmax}, since $d$ is even we can assume that $\supp(h_1)=\supp(h_2)=E(H)$.
Let $h_+=(h_1+h_2)/2$ and $h_-=(h_1-h_2)/2$.
Since $\supp(h_1)=\supp(h_2)=E(H)$, all values of $h_1$ and $h_2$ are odd, and thus all values of $h_+$ and $h_-$ are integers.
Consequently, $h_+$ is a $d$-flow and $h_-$ is a $\varnothing$-flow in $H$.  Furthermore, observe that $\supp(h_+)\cap \supp(h_-)=\emptyset$.
By Observation~\ref{obs-flicir}, we conclude that $H$ contains a $d$-linkage $\PP'$ and an $(s,t)$-circulation $\CC'$
with disjoint supports such that $|\CC'|=\int_Q \dual{h_-}=\frac{1}{2}\Bigl(\int_Q \dual{h_1}-\int_Q \dual{h_2}\Bigr)$.  Therefore, $\circul(H,s,t,d)\ge \frac{1}{2}\Bigl(\int_Q \dual{h_1}-\int_Q \dual{h_2}\Bigr)$.
Combining the inequalities, we conclude that (a) holds.  Note that $(s,t)$-circulation-maximum and minimum $d$-flows
with maximal supports can be found in time $O(|d|\cdot\Vert H\Vert)$ using the algorithm of Lemma~\ref{lemma-circmax-alg},
and they can be converted into a $d$-linkage and an $(s,t)$-circulation of size $\circul(H,s,t,d)$ in $H$ with disjoint supports
in time $O(\Vert H\Vert)$ as described in this paragraph.

Finally, let us prove the part (c).
By Lemma~\ref{lemma-circmax}, there exist paths $Q_1$ and $Q_2$ in $\dual{H}$ from $\dual{s}$ to $\dual{t}$ such that
$\int_{Q_1} h_1=|E(Q_1)|$ and $\int_{Q_2} h_2=-|E(Q_2)|$.
Let $R$ be the closed walk obtained as the concatenation of $Q_1$ with the reversal of $Q_2$.
By Lemma~\ref{lemma-flowover}, we have $\int_{Q_1} \dual{h_2}=\int_{Q_2} \dual{h_2}+\sum_{f\in F(\dual{H})} \omega_R(f)\dual{d}(f)$,
and thus
\begin{align}
2\cdot\circul(H,s,t,d)&=\int_{Q_1} \dual{h_1}-\int_{Q_1} \dual{h_2}\nonumber\\
&=\int_{Q_1} \dual{h_1}-\int_{Q_2} \dual{h_2}-\sum_{f\in F(\dual{H})} \omega_R(f)\dual{d}(f)\nonumber\\
&=|E(Q_1)|+|E(Q_2)|-\sum_{f\in F(\dual{H})} \omega_R(f)\dual{d}(f)\label{eq-ctos}.
\end{align}
For an integer $i$, let $L_i$ be the set consisting of edges $e$ of $\dual{H}$ such that $e$ is incident with faces $f_1$ and $f_2$ satisfying
$\omega_R(f_1)\ge i$ and $\omega_R(f_2)<i$ (one can view $\omega_R$ as assigning heights to faces of $\dual{H}$, and $L_i$
then corresponds to the contour lines at height very slightly less than $i$).
Let $A$ be the set of edges of $\dual{H}$ through that both $Q_1$ and $Q_2$ pass in the same direction.
We claim that $X$ can be chosen to consist of the edges of $A$ and of the cycles into which the sets $L_i$ naturally decompose.
Let us describe the construction precisely.

By Observation~\ref{obs-wind}(b), for an edge $e$ of $\dual{H}$ incident with faces $f_1$ and $f_2$ and letting
$n=\max(\omega_R(f_1),\omega_R(f_2))$,
\begin{itemize}
\item if $e\in (E(Q_1)\setminus E(Q_2))\cup (E(Q_2)\setminus E(Q_1))$, then $e$ belongs to exactly one of the sets $L_i$,
namely to $L_n$,
\item if $e\in E(Q_1)\cap E(Q_2)\setminus A$, then $e$ belongs exactly to two of the sets, namely to $L_n$ and $L_{n-1}$, and
\item if $e\in A$ or $e\not\in E(Q_1)\cup E(Q_2)$, then $e$ does not belong to any of the sets.
\end{itemize}
It follows that
\begin{equation}\label{eq-sqs}
|E(Q_1)|+|E(Q_2)|=2|A|+\sum_i |L_i|.
\end{equation}

Let $\overline{L_i}$ denote the subgraph of $\dual{H}$ with the edge set $L_i$ and the vertex set consisting of the
vertices incident with the edges of $L_i$.
Observe that if $f$ is a face of $\overline{L_i}$ and faces $f_1$ and $f_2$ of $\dual{H}$ satisfy $f_1,f_2\subseteq f$, then
either both $\omega_R(f_1)\ge i$ and $\omega_R(f_2)\ge i$, or both $\omega_R(f_1)<i$ and $\omega_R(f_2)<i$.
Let $F^+_i$ and $F^-_i$ denote the sets of faces of $\overline{L_i}$ for that the former or the latter, respectively, holds.
For $f\in F^+_i$, let $W_f$ denote the subgraph of $\overline{L_i}$ drawn in the boundary of $f$.
Note that faces in $F^+_i$ only share edges with faces in $F^-_i$ and vice-versa.  Consequently, $\overline{L_i}=\bigcup_{f\in F^+_i} W_f$,
and the graphs $W_f$ for $f\in F^+_i$ are pairwise edge-disjoint and $2$-edge-connected.  For $f\in F^+_i$, let $K_f$
denote the set of $2$-connected blocks of $W_f$; since $W_f$ is $2$-edge-connected and all its edges are incident with $f$,
$K_f$ is a set of cycles.  For $C\in K_f$, let $\out_f(C)=\dual{\ext}(C)$ if $f$ is contained in the open disk of the plane bounded by $C$,
and let $\out_f(C)=\dual{\inter}(C)$ otherwise.  Since $d(V(H))=0$, for each face $f\in F^+_i$ we have
\begin{equation}\label{eq-dkf}
\sum_{f'\in F(\dual{H}), f'\subseteq f} \dual{d}(f')=-\sum_{f'\in F(\dual{H}), f'\not\subseteq f} \dual{d}(f')=-\sum_{C\in K_f}\sum_{f''\in \out_f(C)} \dual{d}(f'')\le \sum_{C\in K_f} d(C).
\end{equation}
Let $K_i=\bigcup_{f\in F^+_i} K_f$ and $X=A\cup \bigcup_i K_i$.  Clearly, $X$ is $(s,t)$-connecting, since $\bigcup X=Q_1\cup Q_2$.
Observe that if $f\in F_i^+$ and $i>j$, then there exists a face $f'\in F_j^+$ such that
$f\subseteq f'$, and thus the set $X$ is laminar.  It remains to argue that $\gain{d}(X)=2\cdot\circul(H,s,t,d)$.

Let $m$ be the minimum of $\{\omega_R(f):f\in F(\dual{H})\}$.  Note that for any $f'\in F(\dual{H})$,
\begin{equation}\label{eq-steps}
\omega_R(f')=m+|\{i>m:\text{$f'\subseteq f$ for some $f\in F^+_i$}\}|.
\end{equation}
By (\ref{eq-steps}) and (\ref{eq-dkf}), and using the fact that $d(V(H))=0$, we have
$$\sum_{f'\in F(\dual{H})} \omega_R(f')\dual{d}(f')=m\cdot \dual{d}(F(\dual{H}))+\sum_{i>m}\sum_{f\in F^+_i}\sum_{f'\in F(\dual{H}), f'\subseteq f} d(f')\le \sum_{i>m} \sum_{C\in K_i} d(C),$$
and thus by (\ref{eq-ctos}),
\begin{equation}\label{eq-ctosl}
2\cdot\circul(H,s,t,d)\ge|E(Q_1)|+|E(Q_2)|-\sum_{i>m} \sum_{C\in K_i} d(C).
\end{equation}
By (\ref{eq-sqs}), we have
$$\sum_{x\in X} |x|=2|A|+\sum_i |L_i|=|E(Q_1)|+|E(Q_2)|,$$
and the definition of $X$ gives
$$\sum_{x\in X} d(x)=\sum_{i>m} \sum_{C\in K_i} d(C).$$
By (\ref{eq-ctosl}), we conclude that $$2\cdot\circul(H,s,t,d)\ge |E(Q_1)|+|E(Q_2)|-\sum_{i>m} \sum_{C\in K_i} d(C)=\sum_{x\in X} |x|-\sum_{x\in X} d(x)=\gain{d}(X).$$   By (b), we conclude that $\gain{d}(X)=2\cdot\circul(H,s,t,d)$,
and thus (c) holds.  Furthermore, observe that this construction of the set $X$ from the $d$-flows $h_1$ and $h_2$ can be performed
in time $O(\Vert H\Vert)$.
\end{proof}

For the $3$-coloring applications, we need the following consequence.

\begin{corollary}\label{alg-minmax}
There exists an algorithm with time complexity $O(|d|\cdot\Vert H\Vert)$ which given 
a connected plane graph $H$, faces $s$ and $t$ of $H$, a path $Q$ from $\dual{s}$ to $\dual{t}$ in $\dual{H}$,
an even function $d:V(H)\to \mathbf{Z}$ such that $d(V(H))=0$, and an integer $m$ returns one of the following:
\begin{itemize}
\item A $d$-flow $h$ in $H$ such that $\supp(h)=E(H)$ and $\int_Q \dual{h}\equiv m\pmod 3$, or
\item a cycle $C$ in $\dual{H}$ such that $\gain{d}(C)<0$, or
\item a laminar $(s,t)$-connecting set $X$ of cycles and edges of $\dual{H}$ with $\gain{d}(X)\le 2$,
and $d$-flows $h_1$ and $h_2$ in $H$ such that $\int_Q \dual{h_2} = \int_Q \dual{h_1}+\gain{d}(X)$ and
$\int_Q \dual{h_1}\not\equiv m\not\equiv \int_Q \dual{h_2}\pmod 3$.
\end{itemize}
\end{corollary}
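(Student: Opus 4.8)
The plan is to reduce the statement to Theorem~\ref{thm-flowchar} together with a short counting argument modulo~$3$. First I would run the Ford--Fulkerson algorithm to look for a $d$-flow in $H$; this takes time $O(|d|\cdot\Vert H\Vert)$. If it fails, then $d$ is infeasible and I must output a bad cycle: the standard max-flow/min-cut analysis yields a set $S\subseteq V(H)$ for which the number of edges of $H$ with exactly one end in $S$ is less than $\bigl|\sum_{v\in S}d(v)\bigr|$, and the corresponding edge-cut of $H$ is an edge-disjoint union of cycles of $\dual H$ whose slacks sum to at most this difference, hence to a negative number (this is the min-cut direction of Observation~\ref{obs-exflow}); one of these cycles $C$ therefore has $\gain{d}(C)<0$, and I return it. From now on assume $d$ is feasible.

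Next I would call the algorithm of Lemma~\ref{lemma-circmax-alg} to obtain, in time $O(|d|\cdot\Vert H\Vert)$, an $(s,t)$-circulation-maximum $d$-flow $h_1$ and an $(s,t)$-circulation-minimum $d$-flow $h_2$, both with support $E(H)$ (this uses evenness of $d$), and the algorithm of the last part of Theorem~\ref{thm-flowchar} to obtain, again in time $O(|d|\cdot\Vert H\Vert)$, a laminar $(s,t)$-connecting set $X$ of cycles and edges of $\dual H$ with $\gain{d}(X)=2\cdot\circul(H,s,t,d)$. Write $a=\int_Q\dual{h_2}$, $b=\int_Q\dual{h_1}$, $k=\circul(H,s,t,d)$; then by Theorem~\ref{thm-flowchar}(a) we have $b-a=2k=\gain{d}(X)$.

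The crux is the claim that a $d$-flow $h$ with $\supp(h)=E(H)$ and $\int_Q\dual h=v$ exists if and only if $v\in\{a,a+2,\dots,b\}$. For the forward direction, extremality of $h_1,h_2$ gives $a\le\int_Q\dual h\le b$; and if $h,h'$ have full support, then $h(e),h'(e)\in\{-1,1\}$ for every $e$, so $h-h'$ is twice an integral $\varnothing$-flow and $\int_Q\dual h\equiv\int_Q\dual{h'}\pmod2$, so all full-support $d$-flows attain $\int_Q$-values of the same parity as $a$ and $b$. For the converse, set $h_+=(h_1+h_2)/2$ and $h_-=(h_1-h_2)/2$; these are integral since $h_1,h_2$ are $\pm1$-valued, the supports $\supp(h_+)$ and $\supp(h_-)$ partition $E(H)$, and $\int_Q\dual{h_+}=(a+b)/2$. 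Given $v\in\{a,a+2,\dots,b\}$, I decompose the $\varnothing$-flow $h_-$ into edge-disjoint cycles and reverse a suitable subfamily so as to obtain a $\varnothing$-flow $g$ with $\supp(g)=\supp(h_-)$ and $\int_Q\dual g=v-(a+b)/2$ (possible because the attainable values form an arithmetic progression with common difference~$2$ symmetric about $0$ whose radius is at least $k$, while $v-(a+b)/2\in\{-k,-k+2,\dots,k\}$). Then $h_++g$ is the desired $d$-flow, produced in time $O(\Vert H\Vert)$.

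Finally I apply pigeonhole modulo~$3$. If $k\ge2$, then $\{a,a+2,a+4\}\subseteq\{a,\dots,b\}$ realizes all residues modulo~$3$, so I pick $v$ among these with $v\equiv m\pmod3$ and return the corresponding full-support $d$-flow. If $k\le1$, then $\gain{d}(X)=2k\le2$; should some $v\in\{a,\dots,b\}$ satisfy $v\equiv m\pmod3$ I again return the corresponding full-support flow, and otherwise --- so $a\not\equiv m$ and, when $k=1$, also $b\not\equiv m$ modulo~$3$ --- I return $X$ together with the two full-support $d$-flows attaining $\int_Q$-values $a$ and $b$ (these are $h_2$ and $h_1$ if $k=1$, and $h_1$ used twice if $k=0$), which satisfy the required $b=a+\gain{d}(X)$. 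Every step takes time $O(|d|\cdot\Vert H\Vert)$. The step I expect to be the main obstacle is the crux claim of the third paragraph, especially its parity half: this is precisely what certifies, in the last case, that no $d$-flow with support $E(H)$ can have $\int_Q$-value $\equiv m\pmod3$, so that $X$ is a genuine obstruction.
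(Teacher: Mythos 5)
Your proof is correct and follows essentially the same strategy as the paper's: handle infeasibility via Ford--Fulkerson, obtain the circulation-extremal full-support flows $h_1,h_2$ and the laminar set $X$ from Lemma~\ref{lemma-circmax-alg} and Theorem~\ref{thm-flowchar}, construct full-support $d$-flows realizing every value in $\{a,a+2,\ldots,b\}$ for $\int_Q\dual{(\cdot)}$, and finish by pigeonhole modulo~$3$, with the endpoint flows and $X$ serving as the certificate in the remaining case. The only difference is mechanical: the paper assembles the full-support flows from the linkage-and-circulation output of Theorem~\ref{thm-flowchar} as $h_0+h'+c_i$, whereas you form $h_+=(h_1+h_2)/2$ and $h_-=(h_1-h_2)/2$ and re-orient cycles in a decomposition of $h_-$ --- two renderings of the same splitting already used in the proof of Theorem~\ref{thm-flowchar}(a), and your explicit characterization of the attainable $\int_Q$-values is what the paper states as a remark immediately after the corollary.
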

\begin{proof}
If $d$ is not feasible, then a straightforward modification of Ford-Fulkerson algorithm returns a cycle $C$ in $\dual{H}$
such that $\gain{d}(C)<0$.  We can return such a cycle and stop.

Hence, suppose that $d$ is feasible.  Let
$\PP$ be a $d$-linkage and let $\CC$ be an $(s,t)$-circulation of size $\circul(H,s,t,d)$ in $H$ with $\supp(\PP)\cap\supp(\CC)=\emptyset$,
and let $X$ be a laminar $(s,t)$-connecting set $X$ of cycles and edges of $\dual{H}$ such that $\gain{d}(X)=2\cdot\circul(H,s,t,d)$
returned by the algorithm from Theorem~\ref{thm-flowchar}.  Let $h_0$ be a $d$-flow with $\supp(h_0)=\supp(\PP)$ obtained from $\PP$
by Observation~\ref{obs-flicir}.  Let $H'$ be the subgraph of $H$ with vertex set $V(H)$ and edge set $E(H)\setminus (\supp(\PP)\cup \supp(\CC))$.
Since $h_0$ is even, all vertices have even degree in $H'$, and by the maximality of $\CC$, the graph $H'$ does not contain
any $(s,t)$-non-contractible cycle.  Hence, we can express $H'$ as an edge-disjoint union of $(s,t)$-contractible cycles,
and by sending one unit of flow along each of them, we obtain a $\varnothing$-flow $h'$ in $H$ such that
$\supp(h_0)\cup \supp(h')\cup \supp(\CC)=E(H)$, the supports are pairwise disjoint, and $\int_Q \dual{h'}=0$.
Let $n=\min(\circul(H,s,t,d)+1,3)$, and for $i\in\{1,\ldots,n\}$, let $c_i$ be a $\varnothing$-flow with $\supp(c_i)=\supp(\CC)$
and $\int_Q \dual{c_i}=-|\CC|+2i-2$, obtained by Observation~\ref{obs-flicir}.  Let $h_i=h_0+h'+c_i$, and note that $h_i$
is a $d$-flow with $\supp(h_i)=E(H)$ and $\int_Q \dual{h_i}=\int_Q \dual{h_1} + 2i-2$.  If there exists $i\in \{1,\ldots, n\}$ such that
$\int_Q \dual{h_i}\equiv m\pmod 3$, we return $h=h_i$ and stop.

Otherwise, we clearly have $n\le 2$, and thus $\circul(H,s,t,d)\le 1$ and $\gain{d}(X)\in\{0,2\}$.
If $\gain{d}(X)=2$, then return $X$, $h_1$, and $h_2$.  If $\gain{d}(X)=0$, then return $X$, $h_1$, and $h_1$.
\end{proof}

Note that the last two outcomes of Corollary~\ref{alg-minmax} certify that the first one is impossible:
the second one because no $d$-flow exists, the third one because $\int_Q \dual{h}\in\Bigl\{\int_Q \dual{h_1},\int_Q \dual{h_2}\Bigr\}$ for every $d$-flow $h$ in $H$
with $\supp(h)=E(H)$.

\section{Coloring graphs in the cylinder}\label{sec-cyl}

Let $G$ be a connected plane graph and let $\vec{G}$ be an orientation of $G$.
Given a $3$-coloring $\varphi:V(G)\to \{0,1,2\}$ of $G$, we define a function
$\delta_{\vec{G},\varphi}:E\bigl(\vec{G}\bigr)\to \{-1,1\}$ so that for every edge $e=(u,v)\in E\bigl(\vec{G}\bigr)$, we have
$\varphi(v)\equiv \varphi(u)+\delta_{\vec{G},\varphi}(e)\pmod 3$.
Note that if $Q$ is a path from a vertex $s$ to a vertex $t$ in $G$, this implies
$\varphi(t)\equiv \varphi(s)+\int_Q \delta_{\vec{G},\varphi}\pmod 3$.
For a plane graph $H$ and a function $d:V(H)\to \mathbf{Z}$, if $d(v)$ is divisible by $3$ for every $v\in V(H)$, we write $3|d$.

Tutte~\cite{tutteflow} observed a connection between colorings of a plane graph and flows in its dual, which we can
restate in the precoloring extension setting as follows.  

\begin{lemma}\label{tutte-flow}
Let $G$ be a connected plane graph, let $H=\dual{G}$, let $\vec{H}$ be an arbitrary orientation of $H$ and let $\vec{G}=\dual{\vec{H}}$.
Let $C_1$ and $C_2$ be connected subgraphs of $G$, let $Q$ be a path in $G$ from a vertex
$v_1\in V(C_1)$ to a vertex $v_2\in V(C_2)$, 
let $C=C_1\cup C_2$ and let $\vec{C}$ be the orientation of $C$ induced by $\vec{G}$.
A proper $3$-coloring $\psi:V(C)\to\{0,1,2\}$ of $C$
extends to a $3$-coloring of $G$ if and only if there exists a
feasible even function $d:V(H)\to \mathbf{Z}$ such that $d(V(H))=0$ and $3|d$,
and a $d$-flow $h$ in $H$ with $\supp(h)=E(H)$ such that the restriction of $\dual{h}$ to $E\bigl(\vec{C}\bigr)$ is equal to
$\delta_{\vec{C},\psi}$
and $\psi(v_2)\equiv \psi(v_1)+\int_Q \dual{h}\pmod 3$.  Given such a $d$-flow $h$, we can obtain a $3$-coloring of $G$
extending $\psi$ in time $O(\Vert G\Vert)$.
\end{lemma}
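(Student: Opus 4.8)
The plan is to prove both implications through the classical correspondence (due to Tutte) between proper $3$-colorings of a plane graph and nowhere-zero $\mathbf{Z}_3$-tensions, and between tensions of $G$ and flows of $\dual{G}=H$. Throughout I will use that $\dual{(\dual{e})}=e$ for $e\in E(\vec{H})$, so that a function $h:E(\vec{H})\to\{-1,0,1\}$ is determined by, and conversely determines, its dual $\dual{h}:E(\vec{G})\to\{-1,0,1\}$; in particular the condition ``$\dual{h}$ restricted to $E(\vec{C})$ equals $\delta_{\vec{C},\psi}$'' makes sense and pins down $h$ on the edges dual to $E(C)$. Also recall that for a path (or closed walk) $P$ in $G$ and a $3$-coloring $\varphi$ of $G$, $\varphi$ changes along $P$ by exactly $\int_P\delta_{\vec{G},\varphi}\pmod 3$, as noted before the lemma.

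\textbf{From a coloring to a flow.} Suppose $\psi$ extends to a $3$-coloring $\varphi$ of $G$. I would let $h$ be the nowhere-zero function on $E(\vec{H})$ with $\dual{h}=\delta_{\vec{G},\varphi}$, and set $d(v):=\sum_{e\in N^+(v)}h(e)-\sum_{e\in N^-(v)}h(e)$, so that $h$ is a $d$-flow by construction with $\supp(h)=E(H)$. Then I verify the four required properties of $d$. First, $d(V(H))=0$, since each edge of $\vec{H}$ contributes $+h(e)$ at its tail and $-h(e)$ at its head. Second, a direct unwinding of the duality identifies $d(v)$, up to a sign depending on $v$, with the sum of $\delta_{\vec{G},\varphi}$ along the facial walk bounding the face $\dual{v}$ of $G$; this is a sum of $\deg_H(v)$ values in $\{-1,1\}$, hence congruent to $\deg_H(v)\pmod 2$ (so $d$ is even), and it is the net color change of $\varphi$ around a closed walk, hence $\equiv 0\pmod 3$ (so $3\mid d$). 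Third, $d$ is feasible by Observation~\ref{obs-exflow}, since $h$ is itself a $d$-flow. Finally, restricting $\dual{h}=\delta_{\vec{G},\varphi}$ to $E(\vec{C})$ gives $\delta_{\vec{C},\psi}$ because $\varphi$ and $\psi$ assign the same colors to endpoints of edges of $C$, and $\psi(v_2)\equiv\psi(v_1)+\int_Q\dual{h}\pmod 3$ is precisely $\varphi(v_2)\equiv\varphi(v_1)+\int_Q\delta_{\vec{G},\varphi}\pmod 3$.

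\textbf{From a flow to a coloring.} Given $d$ and $h$ as in the statement, I define $\varphi:V(G)\to\{0,1,2\}$ by fixing $\varphi(v_1)=\psi(v_1)$ and putting $\varphi(v)\equiv\varphi(v_1)+\int_P\dual{h}\pmod 3$, where $P$ is any path in $G=\dual{H}$ from $v_1$ to $v$. The crux — which I expect to be the main obstacle — is well-definedness: for any closed walk $K$ in $G$, Lemma~\ref{lemma-flowover} applied to the plane graph $H$ (whose dual is $G$) gives $\int_K\dual{h}=\nabla(K,\dual{d})=\sum_{v\in V(H)}\omega_K(\dual{v})\,d(v)$, which is divisible by $3$ because $3\mid d$; hence $\varphi$ is well defined. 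It is proper: taking $P$ to be a single edge $e=(u,v)\in E(\vec{G})$ gives $\varphi(v)-\varphi(u)\equiv\dual{h}(e)\in\{-1,1\}\pmod 3$, using $\supp(h)=E(H)$. That $\varphi$ extends $\psi$ uses the connectivity of $C_1$, of $C_2$, and of $C\cup Q$: since $\psi$ is a proper coloring of each $C_j$, we have $\psi(w)\equiv\psi(u)+\int_P\delta_{\vec{C},\psi}\pmod 3$ along any path $P$ inside $C_j$, and $\dual{h}$ restricted to $E(\vec{C})$ equals $\delta_{\vec{C},\psi}$; so routing a path from $v_1$ to $w$ inside $C_1$ handles $w\in V(C_1)$, and for $w\in V(C_2)$ one routes along $Q$ from $v_1$ to $v_2$ and then inside $C_2$, using $\psi(v_2)\equiv\psi(v_1)+\int_Q\dual{h}\pmod 3$ to bridge the two pieces. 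For the algorithmic claim, $\varphi$ is produced from $h$ by a single traversal of $G$ from $v_1$, accumulating $\dual{h}$-values along the edges of a spanning tree, in $O(\Vert G\Vert)$ time.

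In summary, the substantive steps are the well-definedness of $\varphi$ (where $3\mid d$ and Lemma~\ref{lemma-flowover} enter) and its agreement with $\psi$ on $C$ (where the connectivity of the $C_j$ and of $C\cup Q$, together with the hypothesis on $\int_Q\dual{h}$, enter); the remaining content is a routine unwinding of definitions, whose only delicate part is keeping the dual orientation conventions straight.
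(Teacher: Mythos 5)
Your proposal is correct and follows essentially the same route as the paper: both directions use the Tutte tension/flow duality, with the forward direction reading off $h$ and $d$ from $\delta_{\vec{G},\varphi}$ and verifying $d(V(H))=0$, evenness, $3\mid d$ via the facial-walk integral, and feasibility from the existence of $h$; and the reverse direction defining $\varphi$ by integrating $\dual{h}$ along paths from $v_1$, invoking Lemma~\ref{lemma-flowover} together with $3\mid d$ for well-definedness, using $\supp(h)=E(H)$ for properness, and using the connectivity of $C_1$, $C_2$, and $Q$ together with the hypotheses on $\dual{h}\restriction E(\vec{C})$ and $\int_Q\dual{h}$ for agreement with $\psi$.
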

\begin{proof}
Consider a $3$-coloring $\varphi:V(G)\to\{0,1,2\}$ of $G$ extending $\psi$, and let us define $h(e)=\delta_{\vec{G},\varphi}(\dual{e})$
for every $e\in E\bigl(\vec{H}\bigr)$, so that $\delta_{\vec{G},\varphi}=\dual{h}$.  For every $v\in V(H)$, define
$d(v)=\sum_{e\in N_{\vec{H}}^+(v)} h(e)-\sum_{e\in N_{\vec{H}}^-(v)} h(e)$,
so that $h$ is a $d$-flow in $H$; clearly, $d$ is feasible.  Since $h(e)\in \{-1,1\}$
for all $e\in E\bigl(\vec{H}\bigr)$, we conclude that $d(v)$ and $\deg_H(v)$ have the same parity for all $v\in V(H)$, and
thus $d$ is even.  Let $W=u_1u_2\ldots u_t$ be the facial walk of the face $\dual{v}$ of $G$,
directed so that $\dual{v}$ is to the right from it, and let $u_{t+1}=u_1$.  Then by the definition of $d$ and $h$, we have
\begin{align*}
d(v)&=\sum_{e\in N_{\vec{H}}^+(v)} h(e)-\sum_{e\in N_{\vec{H}}^-(v)} h(e)=\int_W \dual{h}\\
&=\int_W \delta_{\vec{G},\varphi} \equiv \sum_{i=1}^t \varphi(u_{i+1})-\varphi(u_i)=0\pmod 3,
\end{align*}
and thus $3|d$.  Furthermore, if $\varphi$ extends $\psi$, then $\delta_{\vec{C},\psi}$ is equal to the restriction
of $\delta_{\vec{G},\varphi}=\dual{h}$ to $E\bigl(\vec{C}\bigr)$, and 
$\varphi(v_2)\equiv \varphi(v_1)+\int_Q \delta_{\vec{G},\varphi}=\varphi(v_1)+\int_Q \dual{h}\pmod 3$.
Hence, the conclusions of the lemma hold.

Suppose conversely that $d:V(H)\to \mathbf{Z}$ is a feasible even function such that $d(V(H))=0$ and $3|d$,
and let $h$ be a $d$-flow in $H$ with $\supp(h)=E(H)$ such that the restriction of $\dual{h}$ to $E\bigl(\vec{C}\bigr)$ is
equal to $\delta_{\vec{C},\psi}$ and $\psi(v_2)\equiv \psi(v_1)+\int_Q \dual{h}\pmod 3$.
Since $h$ is a $d$-flow and $3|d$, by Lemma~\ref{lemma-flowover}, we have
$\int_R \dual{h}=\nabla(R,\dual{d})\equiv 0\pmod 3$ for every closed walk $R$ in $G$.
For each vertex $x\in V(G)$, let $Q_x$ be any path from $v_1$ to $x$ in $G$.
Let $\theta:V(G)\to \{0,1,2\}$ be the function satisfying $\theta(x)\equiv\psi(v_1)+\int_{Q_x} \dual{h}\pmod 3$
for every $x\in V(G)$.  For any walk $W$ from a vertex $x$ to a vertex $y$ of $G$, let $R_W$ denote the concatenation
of $Q_x$, $W$, and the reversal of $Q_y$.  Since $R_W$ is a closed walk, we have
\begin{equation}\label{eq:walk}
\int_W \dual{h}=\int_{R_W}\dual{h}+\int_{Q_y}\dual{h}-\int_{Q_x}\dual{h}\equiv\int_{Q_y}\dual{h}-\int_{Q_x}\dual{h}\equiv \theta(y)-\theta(x)\pmod 3.
\end{equation}
Since $\supp(h)=E(H)$, if $e=(x,y)$ is an edge of $\vec{G}$, then letting $Q_e$ be the path consisting only of $e$,
we have
$$\theta(y)-\theta(x)\equiv\int_{Q_e} \dual{h}=\dual{h}(e)\in \{-1,1\}\pmod 3.$$
Consequently, $\theta(x)\neq\theta(y)$, and thus $\theta$ is a proper $3$-coloring of $G$.
Clearly $\theta(v_1)=\psi(v_1)$, and (\ref{eq:walk}) implies
$$\theta(v_2)\equiv \theta(v_1)+\int_{Q_{v_2}} \dual{h}=\psi(v_1)+\int_{Q_{v_2}} \dual{h}\equiv \psi(v_2)\pmod 3$$
by the assumptions.  Since $\theta(v_2),\psi(v_2)\in \{0,1,2\}$, we have $\theta(v_2)=\psi(v_2)$.
For $i\in\{1,2\}$ and any vertex $x\in V(C_i)$, let $P_x$ denote a path in $C_i$ from $v_i$ to $x$.
By (\ref{eq:walk}) and the assumptions, we have
\begin{align*}
\theta(x)&\equiv \theta(v_i)+\int_{P_x} \dual{h}=\psi(v_i)+\int_{P_x} \dual{h}\\
&=\psi(v_i)+\int_{P_x} \delta_{\vec{C},\psi}\equiv \psi(v_i)+(\psi(x)-\psi(v_i))\\
&=\psi(x)\pmod 3,
\end{align*}
and since $\theta(x),\psi(x)\in \{0,1,2\}$, we have $\theta(x)=\psi(x)$.  Consequently, $\theta$ is a proper $3$-coloring of $G$
extending $\psi$.  Note that $\theta$ can be constructed from $h$ in time $O(\Vert G\Vert)$, setting $\theta(v_1)=\psi(v_1)$
and propagating the colors using the fact that $\theta(y)\equiv \theta(x)+\dual{h}(e)\pmod 3$ for every edge $e=(x,y)$ of $\vec{G}$.
\end{proof}

When applying Lemma~\ref{tutte-flow}, we usually try all feasible even functions $d:V(H)\to \mathbf{Z}$ such that $d(V(H))=0$ and $3|d$
one by one.  Once the function $d$ is fixed, we can enforce the condition that the restriction of $\dual{h}$ to $E\bigl(\vec{C}\bigr)$ is equal to
$\delta_{\vec{C},\psi}$ using the following construction.

Let $G$ be a connected plane graph, let $H=\dual{G}$, let $\vec{H}$ be an arbitrary orientation of $H$, and let $\vec{G}=\dual{\vec{H}}$.
Let $z_1$ and $z_2$ be vertices of $H$ and let $C$ be the subgraph of $G$ consisting of the vertices and edges drawn in the boundaries
of the faces $\dual{z_1}$ and $\dual{z_2}$.  Let $\vec{C}$ be the orientation of $C$ induced by $\vec{G}$.
For functions $d:V(H)\to \mathbf{Z}$ and $\delta:E\bigl(\vec{C}\bigr)\to\mathbf{Z}$,
let $(d/\delta):V(H-\{z_1,z_2\})\to\mathbf{Z}$ be the function defined by setting
$$(d/\delta)(v)=d(v)+\sum_{i\in\{1,2\}}\Biggl(\sum_{e\in N^+_{\vec{H}}(z_i)\cap N^-_{\vec{H}}(v)} \delta(\dual{e})-\sum_{e\in N^-_{\vec{H}}(z_i)\cap N^+_{\vec{H}}(v)} \delta(\dual{e})\Biggr)$$
for every $v\in V(H)\setminus\{z_1,z_2\}$.  That is, 
$(d/\delta)(v)$ is the amount of charge originating in $v$ adjusted by
adding the amount of charge sent to $v$ from $z_1$ and $z_2$ according to the partial flow corresponding to~$\delta$.

\begin{observation}\label{obs-contrflow}
Let $G$ be a connected plane graph, let $H=\dual{G}$, let $\vec{H}$ be an arbitrary orientation of $H$, and let $\vec{G}=\dual{\vec{H}}$.
Let $z_1$ and $z_2$ be vertices of $H$ and let $C$ be the subgraph of $G$ consisting of the vertices and edges drawn in the boundaries
of the faces $\dual{z_1}$ and $\dual{z_2}$.  Let $\vec{C}$ be the orientation of $C$ induced by $\vec{G}$.
Let $d:V(H)\to \mathbf{Z}$ and $\delta:E\bigl(\vec{C}\bigr)\to\mathbf{Z}$ be functions such that
$$d(z_i)=\sum_{e\in N^+_{\vec{H}}(z_i)} \delta(\dual{e})-\sum_{e\in N^-_{\vec{H}}(z_i)} \delta(\dual{e})$$
for $i\in\{1,2\}$.
\begin{itemize}
\item If $h'$ is a $(d/\delta)$-flow in $H-\{z_1,z_2\}$, then the function $h:E\bigl(\vec{H}\bigr)\to\mathbf{Z}$
defined by $h(e)=h'(e)$ for $e\in E\bigl(\vec{H}-\{z_1,z_2\}\bigr)$ and $h(e)=\delta(\dual{e})$ for each edge $e$
incident with $z_1$ or $z_2$ is a $d$-flow in $H$.
\item If $h$ is a $d$-flow in $H$ such that $h(e)=\delta(\dual{e})$ for each edge $e$ incident with $z_1$ or $z_2$,
then the restriction of $h$ to $E\bigl(\vec{H}-\{z_1,z_2\}\bigr)$ is a $(d/\delta)$-flow in $H-\{z_1,z_2\}$.
\end{itemize}
\end{observation}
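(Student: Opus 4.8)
The plan is to verify both bullet points by checking the flow-conservation identity $\sum_{e\in N^+(v)}h(e)-\sum_{e\in N^-(v)}h(e)=d(v)$ (resp.\ $\,=(d/\delta)(v)$) vertex by vertex, treating $z_1,z_2$ separately from the remaining vertices.

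First I would record the bookkeeping that makes the definition of $d/\delta$ work. Since $C$ is the subgraph of $G=\dual{H}$ drawn in the boundaries of the faces $\dual{z_1}$ and $\dual{z_2}$, the edge set of $\vec{C}$ is exactly $\{\dual{e}:e\in E(H)\text{ incident with }z_1\text{ or }z_2\}$, so $\delta(\dual{e})$ is defined precisely on those edges. For $v\in V(H)\setminus\{z_1,z_2\}$, each edge $e$ joining $v$ to some $z_i$ lies in $N^+(z_i)\cap N^-(v)$ if it points from $z_i$ to $v$, and in $N^-(z_i)\cap N^+(v)$ if it points from $v$ to $z_i$; hence the correction term $(d/\delta)(v)-d(v)$ is, by definition, the net amount of $\delta$-flow sent into $v$ from $\{z_1,z_2\}$, and therefore equals the negative of the net out-flow, across the edges joining $v$ to $\{z_1,z_2\}$, of any function that agrees with $\delta$ on those edges.

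For the first bullet, the function $h$ is well defined: $E(\vec{H}-\{z_1,z_2\})$ and the set of edges incident with $z_1$ or $z_2$ partition $E(\vec{H})$, and $\delta$ is defined on all of the latter. At $v=z_i$ every incident edge $e$ has $h(e)=\delta(\dual{e})$, so the net out-flow of $h$ at $z_i$ is $\sum_{e\in N^+(z_i)}\delta(\dual{e})-\sum_{e\in N^-(z_i)}\delta(\dual{e})=d(z_i)$ by hypothesis. At $v\notin\{z_1,z_2\}$ I would split $N^+(v)$ and $N^-(v)$ into the edges incident with $\{z_1,z_2\}$ and the rest: the net out-flow of $h$ over the ``rest'' is the net out-flow of $h'$ at $v$ inside $H-\{z_1,z_2\}$, which is $(d/\delta)(v)$; the net out-flow of $h$ over the edges incident with $\{z_1,z_2\}$ is $d(v)-(d/\delta)(v)$ by the observation above. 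Adding gives $d(v)$, so $h$ is a $d$-flow. The second bullet is this computation read backwards: given a $d$-flow $h$ agreeing with $\delta$ on all edges incident with $z_1$ or $z_2$, restrict it to $H-\{z_1,z_2\}$ and rearrange the conservation law at each $v\notin\{z_1,z_2\}$, moving the now-known contributions of the deleted incident edges to the right-hand side, where they recombine with $d(v)$ to give $(d/\delta)(v)$.

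There is no real obstacle beyond careful bookkeeping of orientations: keeping straight which of $N^+(v),N^-(v)$ corresponds to which of $N^+(z_i),N^-(z_i)$ for an edge running between $v$ and $z_i$, and checking that edges with both ends in $\{z_1,z_2\}$ (and loops of $H$, if present) are handled consistently by the two cases. No topology beyond the planar duality already in place is required.
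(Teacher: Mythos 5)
Your proposal is correct, and it carries out precisely the computation the paper leaves to the reader: Observation~\ref{obs-contrflow} is stated without proof, and the intended justification is exactly the vertex-by-vertex verification of the conservation identity that you give, using the fact that the correction term $(d/\delta)(v)-d(v)$ is by definition the net $\delta$-flow into $v$ across the edges joining $v$ to $\{z_1,z_2\}$. Your handling of the two sides (splitting the edge set at each $v\notin\{z_1,z_2\}$ into edges meeting $\{z_1,z_2\}$ and the rest, and using the hypothesis at $z_1,z_2$ directly) is correct, and you rightly flag that edges with both ends in $\{z_1,z_2\}$ only enter the conservation equations at $z_1,z_2$, where the hypothesis on $d(z_i)$ already accounts for them.
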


We can now put things together to obtain an algorithm to $3$-color graphs drawn in the cylinder with precolored boundary cycles.
\begin{proof}[Proof of Theorem~\ref{thm-mainalg}]
If the cycles $C_1$ and $C_2$ are not disjoint, then it suffices to check whether $\psi$ extends to a $3$-coloring
of each subgraph of $G$ drawn in the closure of a face of the connected graph $C_1\cup C_2$.
This can be done using the algorithm from Theorem~\ref{thm-onec}, applied separately to each such subgraph.
Hence, we can assume that the cycles $C_1$ and $C_2$ are vertex-disjoint.

Let $H=\dual{G}$, let $\vec{H}$ be an arbitrary orientation of $H$, and let $\vec{G}=\dual{\vec{H}}$.
Let $C=C_1\cup C_2$ and let $\vec{C}$ be the orientation of $C$ induced by $\vec{G}$.  Let $z_1$ and $z_2$ be the vertices of $H$
such that $\dual{z_i}$ is the face of $G$ bounded by $C_i$ for $i\in\{1,2\}$.  We view the cycles $C_1$ and $C_2$ as directed so
that the face $\dual{z_i}$ is to the right from $C_i$.
Let $Q$ be any path in $G$ from a vertex $v_1\in V(C_1)$ to a vertex $v_2\in V(C_2)$ intersecting $C_1\cup C_2$ only in its endvertices,
and let $m=(\psi(v_2)-\psi(v_1))\bmod 3$.
We iterate over all even functions $d:V(H)\to \mathbf{Z}$ such that $d(V(H))=0$, $3|d$,
$d(z_i)=\int_{C_i} \delta_{\vec{C},\psi}$ for $i\in \{1,2\}$, and $|d(v)|\le \deg_H(v)$ for all $v\in V(H)$.
Clearly, there are at most $r_{f_1,f_2}(G)$ possible choices for $d$.  By Lemma~\ref{tutte-flow}, it suffices to check whether
for any such function $d$,
there exists a $d$-flow $h$ in $H$ with $\supp(h)=E(H)$ such that the restriction of
$\dual{h}$ to $E\bigl(\vec{C}\bigr)$ is equal to $\delta_{\vec{C},\psi}$ and $\int_Q \dual{h}\equiv m\pmod 3$.
If such a $d$-flow exists, we can in time $O(|G|)$ turn $h$ into a $3$-coloring extending $\psi$
as described in the proof of Lemma~\ref{tutte-flow}.  Otherwise, Lemma~\ref{tutte-flow} implies no $3$-coloring of $G$
extends $\psi$.

Let $g_1$ and $g_2$ be the faces of $H-\{z_1,z_2\}$ such that in $H$, the vertex $z_i$ is drawn in $g_i$ for $i\in\{1,2\}$
(since the cycles $C_1$ and $C_2$ are vertex-disjoint, we have $g_1\neq g_2$).  Note that the graph $G'=\dual{(H-\{z_1,z_2\})}$
contains a path $Q'$ from $\dual{g_1}$ to $\dual{g_2}$ with $E(Q')=E(Q)$.  By Observation~\ref{obs-contrflow}, to find
a $d$-flow $h$ with the properties described in the previous paragraph (or decide none exists), it suffices to find
a $(d/\delta_{\vec{C},\psi})$-flow $h'$ in $H-\{z_1,z_2\}$ such that $\supp(h')=E(H-\{z_1,z_2\})$
and $\int_{Q} \dual{h'}\equiv m\pmod 3$, or decide none exists.  By Corollary~\ref{alg-minmax}, this can be done in
time $O(|d|\cdot|G|)=O(q(G)|G|)$.

Since the test needs to be performed for at most $r_{f_1,f_2}(G)$ possible choices of $d$, we conclude the time complexity
of the algorithm is $O(r_{f_1,f_2}(G)q(G)|G|)$.
\end{proof}

Using the ideas from the proof of Theorem~\ref{thm-mainalg}, let us now explicitly formulate a sufficient condition
for extendability of a $3$-coloring of two cycles in a planar graph.  Let $G$ be a connected simple plane graph and let $C_1$ and $C_2$
be vertex-disjoint cycles bounding distinct faces $f_1$ and $f_2$ of $G$, where $f_2$ is the outer face of $G$; for $i\in\{1,2\}$, we view the cycle $C_i$ as directed so
that the face $f_i$ is to the right from $C_i$.  Let $\vec{C}$ be any orientation of $C_1\cup C_2$.
Let $\psi$ be a proper $3$-coloring of $C_1\cup C_2$
and let $\dual{d}:F(G)\to \mathbb{Z}$ be an even function such that $\dual{d}(F(G))=0$, $3|\dual{d}$,
$\dual{d}(f_i)=\int_{C_i} \delta_{\vec{C},\psi}$ for $i\in \{1,2\}$, and $|\dual{d}(f)|\le |f|$ for all $f\in F(G)$.

If a path $R$ in $G$ has both ends in $C_i$ for some $i\in\{1,2\}$
and is otherwise disjoint from $C_i$ and edge-disjoint from $C_{3-i}$, we say $R$ is a \emph{generalized chord} of $C_i$.  Let $K_R$ be the unique
$(f_1,f_2)$-contractible cycle in $C_i\cup R$, and let $B$ be the path $K_R\cap C_i$ directed so that $f_i$ is to the right of $B$
(we say $B$ is the \emph{base} of $R$).  We define $\dual{\inter}(R)=\dual{\inter}(K_R)$.
Let us remark that since $f_2$ is the outer face of $G$, we have $f_1,f_2\not\in \dual{\inter}(R)$.
We define $$\gain{\dual{d},\psi}(R)=|E(R)|-\Bigl|\dual{d}(\dual{\inter}(R))+\int_B \delta_{\vec{C},\psi}\Bigr|.$$

A \emph{$(C_1,C_2)$-connector} $Q$ is the union of two vertex-disjoint paths, both with one end in $C_1$, the other end in $C_2$,
and otherwise disjoint from $C_1\cup C_2$.  Let $K_Q$ be one of the two $(f_1,f_2)$-contractible cycles in $C_1\cup C_2\cup Q$,
and for $i\in\{1,2\}$, let $B_i$ be the path $K_Q\cap C_i$ directed so that $f_i$ is to the right of $B_i$.
We define $$\gain{\dual{d},\psi}(Q)=|E(Q)|-\Bigl|\dual{d}(\dual{\inter}(K_Q))+\int_{B_1} \delta_{\vec{C},\psi}+\int_{B_2} \delta_{\vec{C},\psi}\Bigr|.$$
Observe that the value of $\gain{\dual{d},\psi}(Q)$ does not depend on which of the two cycles we choose as $K_Q$,
since $\dual{d}(F(G))=0$ and $\dual{d}(f_i)=\int_{C_i} \delta_{\vec{C},\psi}$ for $i\in\{1,2\}$.

For a cycle $K$ in $G$ edge-disjoint from $C_1\cup C_2$, we let $\gain{\dual{d},\psi}(K)=|K|-|\dual{d}(\dual{\inter}(K))|$.
An edge $e$ of $G$ is \emph{non-chord} if it is not the case that both vertices incident with $e$ are contained in the same cycle $C_i$,
for $i\in\{1,2\}$.  For a non-chord edge $e$, we let $\gain{\dual{d},\psi}(e)=2$.

A \emph{constraint} is a generalized chord, a $(C_1,C_2)$-connector, a cycle edge-disjoint from $C_1\cup C_2$, or a non-chord edge.
A set $X$ of constraints is \emph{$(C_1,C_2)$-connecting} if $\bigcup X$ contains a path from $C_1$ to $C_2$.
We define $\gain{\dual{d},\psi}(X)=\sum_{T\in X} \gain{\dual{d},\psi}(T)$.

\begin{lemma}\label{lemma-sufficient}
Let $G$ be a connected simple plane graph and let $C_1$ and $C_2$
be vertex-disjoint cycles bounding distinct faces $f_1$ and $f_2$ of $G$, where $f_2$ is the outer face of $G$; for $i\in\{1,2\}$, we view the cycle $C_i$ as directed so
that the face $f_i$ is to the right from $C_i$.  Let $\vec{C}$ be any orientation of $C_1\cup C_2$.
Let $\psi$ be a proper $3$-coloring of $C_1\cup C_2$
and let $\dual{d}:F(G)\to \mathbb{Z}$ be an even function such that $\dual{d}(F(G))=0$, $3|\dual{d}$,
$\dual{d}(f_i)=\int_{C_i} \delta_{\vec{C},\psi}$ for $i\in \{1,2\}$, and $|\dual{d}(f)|\le |f|$ for all $f\in F(G)$.
If $\gain{\dual{d},\psi}(T)\ge 0$ for every constraint $T$ and $\gain{\dual{d},\psi}(X)>2$ for every $(C_1,C_2)$-connecting
set $X$ of constraints, then $\psi$ extends to a $3$-coloring of $G$.
\end{lemma}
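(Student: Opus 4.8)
My plan is to follow the proof of Theorem~\ref{thm-mainalg} and then read the obstruction off Corollary~\ref{alg-minmax}.  First I would set up the dual: let $H=\dual G$, let $z_i=\dual{f_i}$, fix an orientation $\vec H$ of $H$ with $\vec G=\dual{\vec H}$ inducing $\vec C$ on $C=C_1\cup C_2$, write $\delta=\delta_{\vec C,\psi}$, and let $d\colon V(H)\to\mathbf Z$ be the function with $d(\dual f)=\dual d(f)$.  The hypotheses on $\dual d$ say precisely that $d$ is even, $d(V(H))=0$, $3\mid d$, $|d(v)|\le\deg_H(v)$, and $d(z_i)=\int_{C_i}\delta$, the last being the compatibility condition of Observation~\ref{obs-contrflow}.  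Pick a path $Q$ in $G$ from a vertex $v_1\in V(C_1)$ to a vertex $v_2\in V(C_2)$ meeting $C$ only in its ends, and set $m=(\psi(v_2)-\psi(v_1))\bmod 3$.  By Lemma~\ref{tutte-flow} it suffices to find a $d$-flow $h$ in $H$ with $\supp(h)=E(H)$, $\dual h|_{E(\vec C)}=\delta$, and $\int_Q\dual h\equiv m\pmod 3$; by Observation~\ref{obs-contrflow} this reduces to finding a $(d/\delta)$-flow $h'$ in $H'=H-\{z_1,z_2\}$ with $\supp(h')=E(H')$ and $\int_{Q'}\dual{h'}\equiv m\pmod 3$, where $g_i$ is the face of $H'$ containing $z_i$ and $Q'$ is the path of $G'=\dual{H'}$ from $\dual{g_1}$ to $\dual{g_2}$ obtained from $Q$ by the contraction (it exists since $Q$ avoids $E(C)$ and meets $V(C)$ only in its ends, and $\int_{Q'}\dual{h'}=\int_Q\dual h$ whenever $h$ and $h'$ agree on $E(G')$), exactly as in the proof of Theorem~\ref{thm-mainalg}.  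Here $H'$ is connected because $C_1$ and $C_2$ are vertex-disjoint cycles, and $d':=d/\delta$ is even with $d'(V(H'))=0$.

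Then I would apply Corollary~\ref{alg-minmax} to $H'$, $s=g_1$, $t=g_2$, $Q'$, $d'$, and $m$.  If its first outcome occurs we obtain the desired $h'$ and are done, so the whole task is to exclude the other two outcomes using the hypotheses; this is where the definitions preceding the lemma enter, as a slack-preserving dictionary between the cycles and edges of $\dual{H'}=G'$ and the constraints of $G$.  Passing from $G$ to $G'$ contracts the closed disk $f_i\cup C_i$ to the vertex $\dual{g_i}$ and deletes $E(C_i)$, so: an edge of $G'$ is a non-chord edge of $G$ or a single-edge generalized chord of some $C_i$; a cycle of $G'$ meeting neither $\dual{g_1}$ nor $\dual{g_2}$ is a cycle of $G$ edge-disjoint from $C$; a cycle meeting only $\dual{g_i}$ un-contracts to a generalized chord of $C_i$, whose cycle $K_R$ and base $B$ are exactly the objects obtained by filling the relevant arc of $C_i$ back in; and a cycle meeting both $\dual{g_1}$ and $\dual{g_2}$ un-contracts to a $(C_1,C_2)$-connector (or, in the degenerate cases where it meets some $\dual{g_i}$ through two edges at one vertex of $C_i$, to a generalized chord or a cycle edge-disjoint from $C$).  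In each case $\gain{d'}(x)=\gain{\dual d,\psi}(T(x))$: by the bookkeeping of Observation~\ref{obs-contrflow}, re-expanding $\dual{g_i}$ into $f_i\cup C_i$ changes ``$\dual d$ of the interior'' of the cycle by the net $\delta$-flow sent from $z_i$ across the base $B$, which is precisely the correction $\int_B\delta$ in the definition of $\gain{\dual d,\psi}$ (with an analogous compensation of $\dual d(f_1)$ when a cycle edge-disjoint from $C$ happens to enclose $f_1$).  Likewise a $(g_1,g_2)$-connecting set of cycles and edges of $G'$ maps to a $(C_1,C_2)$-connecting set of constraints, and $\gain{}$ is additive along the correspondence.

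Granting this dictionary, the proof concludes immediately.  If Corollary~\ref{alg-minmax} returns its second outcome, a cycle $C$ of $G'$ with $\gain{d'}(C)<0$, then $T(C)$ is a constraint with $\gain{\dual d,\psi}(T(C))<0$, contradicting the first hypothesis.  If it returns its third outcome, a laminar $(g_1,g_2)$-connecting set $X$ of cycles and edges of $G'$ with $\gain{d'}(X)\le 2$, then the corresponding set $T(X)$ of constraints is $(C_1,C_2)$-connecting with $\gain{\dual d,\psi}(T(X))\le 2$, contradicting the second hypothesis (slacks are integers, so ``$>2$'' means ``$\ge 3$'').  Hence the first outcome occurs, producing $h'$, and therefore $\psi$ extends to a $3$-coloring of $G$.

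I expect the main obstacle to be the dictionary of the second paragraph.  Establishing it needs a careful case analysis according to how a cycle of $\dual{H'}$ meets the two contracted vertices---whether it avoids them, meets one or both, and meets each through one or through two edges at a common vertex of $C_i$---and, in each case, a sign-chase confirming that the winding/charge bookkeeping reproduces exactly the correction terms $\int_B\delta$ and $\int_{B_1}\delta+\int_{B_2}\delta$ of the $\gain{\dual d,\psi}$ definitions, together with a check that the $(f_1,f_2)$-contractible cycle $K_R$ (resp.\ $K_Q$) selected there is the one arising from the un-contraction.  A minor preliminary is the connectivity of $H'$, which is exactly why the two cycles are assumed vertex-disjoint; everything else is a transcription of the argument already used for Theorem~\ref{thm-mainalg}.
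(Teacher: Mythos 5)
Your proposal is correct and follows essentially the same route as the paper's proof: reduce via Lemma~\ref{tutte-flow} and Observation~\ref{obs-contrflow} to a $(d/\delta_{\vec C,\psi})$-flow problem in $H-\{z_1,z_2\}$, invoke Corollary~\ref{alg-minmax}, and map cycles and edges of $\dual{(H-\{z_1,z_2\})}$ back to constraints in $G$ so that the slacks agree. The paper carries out the same case analysis you describe (according to whether $\dual{g_1},\dual{g_2}$ lie on the cycle $K$, and whether the two edges of $K$ at $\dual{g_i}$ lift to a common vertex of $C_i$) and likewise states the slack identity $\gain{\dual d,\psi}(T)=\gain{d/\delta_{\vec C,\psi}}(K)$ as an observation rather than spelling out the sign-chase.
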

\begin{proof}
Let $H=\dual{G}$, let $\vec{H}$ be an orientation of $H$, and let $\vec{G}=\dual{\vec{H}}$; we choose $\vec{H}$ so that the orientation
$\vec{G}$ extends $\vec{C}$.
Let $Q$ be any path in $G$ from a vertex $v_1\in V(C_1)$ to a vertex $v_2\in V(C_2)$ intersecting $C_1\cup C_2$ only in its endvertices,
and let $m=(\psi(v_2)-\psi(v_1))\bmod 3$.
Let $d:V(H)\to\mathbb{Z}$ be the function corresponding to $\dual{d}$.
By Lemma~\ref{tutte-flow}, it suffices to show that there exists a $d$-flow $h$ in $H$ with $\supp(h)=E(H)$ such that the restriction of
$\dual{h}$ to $E\bigl(\vec{C}\bigr)$ is equal to $\delta_{\vec{C},\psi}$ and $\int_Q \dual{h}\equiv m\pmod 3$.

For $i\in\{1,2\}$, let $z_i=\dual{f_i}$, and let $g_1$ and $g_2$ be the faces of $H-\{z_1,z_2\}$ such that $z_i$ is drawn in $g_i$ for $i\in\{1,2\}$
(since the cycles $C_1$ and $C_2$ are vertex-disjoint, we have $g_1\neq g_2$).
By Observation~\ref{obs-contrflow}, it suffices to show there exists
a $(d/\delta_{\vec{C},\psi})$-flow $h'$ in $H-\{z_1,z_2\}$ such that $\supp(h')=E(H-\{z_1,z_2\})$
and $\int_{Q} \dual{h'}\equiv m\pmod 3$.  By Corollary~\ref{alg-minmax}, it suffices to verify that
\begin{equation}\label{eq-check1}
\gain{d/\delta_{\vec{C},\psi}}(K)\ge 0
\end{equation}
for every cycle $K$ in $\dual{(H-\{z_1,z_2\})}$, and that
\begin{equation}\label{eq-check2}
\gain{d/\delta_{\vec{C},\psi}}(X)>2
\end{equation}
for every $(g_1,g_2)$-connecting set $X$ of cycles and edges of $\dual{(H-\{z_1,z_2\})}$.

Consider any cycle $K$ in $\dual{(H-\{z_1,z_2\})}$, and let $T$ be the subgraph of $G$ with $E(T)=E(K)$
and $V(T)$ consisting of the vertices incident with these edges.  If $\dual{g_1},\dual{g_2}\not\in V(K)$, then $T$
is a cycle in $G$ vertex-disjoint from $C_1\cup C_2$.  If $\dual{g_i}\in V(K)$ and $\dual{g_{3-i}}\not\in V(K)$ for some $i\in\{1,2\}$,
then $T$ is either a cycle in $G$ intersecting $C_i$ in one vertex and disjoint from $C_{3-i}$ (and thus edge-disjoint from
$C_1\cup C_2$), or $T$ is a generalized chord of $C_i$ vertex-disjoint from $C_{3-i}$.  Finally, if $\dual{g_1},\dual{g_2}\in V(K)$, then $T$
is a cycle intersecting each of $C_1$ and $C_2$ in one vertex, or a generalized chord of $C_i$ intersecting $C_{3-i}$ in one vertex
for some $i\in\{1,2\}$, or a $(C_1,C_2)$-connector.  In either case, $T$ is a constraint.  Observe that by the definition
of $d/\delta_{\vec{C},\psi}$, we have $$\gain{\dual{d},\psi}(T)=\gain{d/\delta_{\vec{C},\psi}}(K).$$
Furthermore, if $X$ is a $(g_1,g_2)$-connecting set $X$ of cycles and edges of $\dual{(H-\{z_1,z_2\})}$ and $X'$ is obtained
from $X$ by transforming each cycle as described above and keeping the edges of $X$ that are non-chord in $G$, then $X'$
is $(C_1,C_2)$-connecting.  Therefore, the inequalities (\ref{eq-check1}) and (\ref{eq-check2}) follow from the assumptions of this Lemma.
\end{proof}

Let us remark that while the condition that $\gain{\dual{d},\psi}(T)\ge 0$ for every constraint $T$ is also necessary for the extendability
of $\psi$, the condition on set of constraints is only sufficient: a coloring can extend even
if $\gain{\dual{d},\psi}(X)\le 2$ for some $(C_1,C_2)$-connecting set $X$ of constraints.

\section{Coloring graphs in the torus}\label{sec-tor}

Let us now apply the results to colorings of graphs in the torus. 
Suppose $G$ is a graph with a $2$-cell drawing in the torus,
and let $C$ be a non-contractible cycle in $G$.  Cutting the torus along $C$ results in a graph $G_0$ drawn in a cylinder $\Sigma$, with
the boundary tracing two cycles $C_1$ and $C_2$ corresponding to $C$.
Let $\gamma$ be the function mapping the points of $\Sigma$ to the corresponding points of the torus (note that $\gamma(C_1)=\gamma(C_2)=C$).
We can take $\Sigma$ as a part of the plane and obtain a plane drawing
of $G_0$ such that $C_2$ bounds the outer face.
We say that $(G_0,C_1,C_2,\gamma)$ is obtained from $G$ by \emph{cutting along $C$}.  Clearly, colorings of $G$ and $G_0$ are related
as follows.

\begin{observation}\label{obs-cut}
Let $G$ be a graph with a $2$-cell drawing in the torus, let $C$ be a non-contractible cycle in $G$, and let
$(G_0,C_1,C_2,\gamma)$ be obtained from $G$ by cutting along $C$.  Then $G$ is $3$-colorable if and only if there
exists a $3$-coloring $\psi$ of $C$ such that the $3$-coloring $\gamma\circ\psi$ of $C_1\cup C_2$ extends to a $3$-coloring of $G_0$.
\end{observation}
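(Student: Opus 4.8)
The plan is to check the two directions directly, the only real content being a precise bookkeeping of what \emph{cutting along $C$} does. Recall that $G_0$ is obtained from $G$ by removing a narrow open annulus around $C$; consequently $\gamma$ induces surjections $V(G_0)\to V(G)$ and $E(G_0)\to E(G)$ preserving incidences, it is injective away from $C_1\cup C_2$ and two-to-one on $V(C_1)\cup V(C_2)$ (each vertex of $C$ having one preimage in $C_1$ and one in $C_2$), and re-identifying $C_1$ with $C_2$ according to $\gamma$ recovers $G$. I will use these facts freely; non-contractibility of $C$ enters only through the construction preceding the statement, which guarantees that the cut surface is a cylinder.

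For the forward direction, let $\varphi$ be a $3$-coloring of $G$ and put $\psi=\varphi|_{V(C)}$. Define a coloring of $G_0$ by assigning to each $v\in V(G_0)$ the color $\varphi(\gamma(v))$. Since every edge of $G_0$ is mapped by $\gamma$ onto an edge of $G$ with the same pair of endpoints and $\varphi$ is proper, this coloring is a proper $3$-coloring of $G_0$; and on $V(C_1\cup C_2)$ it assigns to $v$ the color $\psi(\gamma(v))$, i.e.\ it agrees with $\gamma\circ\psi$. Hence $\gamma\circ\psi$ extends to a $3$-coloring of $G_0$.

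For the backward direction, suppose $\psi$ is a $3$-coloring of $C$ and $\varphi_0$ is a $3$-coloring of $G_0$ extending $\gamma\circ\psi$. If $v_1\in V(C_1)$ and $v_2\in V(C_2)$ are identified in $G$, i.e.\ $\gamma(v_1)=\gamma(v_2)$, then $\varphi_0(v_1)=\psi(\gamma(v_1))=\psi(\gamma(v_2))=\varphi_0(v_2)$; thus $\varphi_0$ is constant on each fibre of $\gamma$ and descends to a well-defined map $\varphi$ on $V(G)$. Every edge of $G$ is the $\gamma$-image of an edge of $G_0$ with the same endpoints (modulo the identification), so properness of $\varphi_0$ gives properness of $\varphi$, and $\varphi$ is the desired $3$-coloring of $G$.

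The proof is essentially bookkeeping; the one point deserving care — the ``main obstacle'', such as it is — is pinning down the correspondence between $V(G),E(G)$ and $V(G_0),E(G_0)$ described in the first paragraph, namely that cutting duplicates exactly the vertices and edges of $C$ and otherwise leaves $G$ intact, since once that is fixed both implications are immediate.
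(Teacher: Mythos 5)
Your proof is correct. The paper does not actually give a proof of this observation — it is prefaced only by the sentence ``Clearly, colorings of $G$ and $G_0$ are related as follows'' — so there is nothing to compare against; your argument supplies the omitted bookkeeping via exactly the natural route (pull a coloring of $G$ back along $\gamma$, or observe that a coloring of $G_0$ extending $\psi\circ\gamma$ is constant on $\gamma$-fibres and hence descends). One minor imprecision: cutting the torus along $C$ does not ``remove a narrow open annulus around $C$'' (that would delete $C$ itself); rather it duplicates $C$ into the two boundary cycles $C_1,C_2$ of the resulting cylinder while leaving the rest of $G$ intact. The formal properties of $\gamma$ you actually invoke afterward (surjective, injective off $C_1\cup C_2$, two-to-one on $C_1\cup C_2$, and quotienting by it recovers $G$) are the correct ones, so this does not affect the argument.
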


We now aim to prove Lemma~\ref{lemma-ew} that no 4-critical triangle-free graph can be drawn in the torus with edge-width
at least $21$.  Suppose for a contradiction such a graph $G$ exists, and let $C$ be a shortest non-contractible cycle in $G$.
By Observation~\ref{obs-cut}, to obtain a contradiction it suffices to cut $G$ along $C$ and find a $3$-coloring $\psi$ of $C$ such
that the corresponding coloring of $C_1\cup C_2$ extends to a $3$-coloring of $G_0$. To this end, we will choose $\psi$
and a function $\dual{d}:F(G_0)\to \mathbb{Z}$ carefully and apply Lemma~\ref{lemma-sufficient}.

The following result will be useful when bounding $\gain{\dual{d},\psi}$ for various constraints (recall that $G$ has only
very few faces of length other than $4$ by Theorem~\ref{thm-dvopek}).
For a graph $G$ with a $2$-cell drawing in a surface other than the sphere
and a contractible cycle $C$ in $G$, let $\dual{\inter}(C)$ denote the set of faces of $G$ drawn in the open disk
bounded by $C$.  For a set of faces $T$, let $S(T)$ denote the multiset of the lengths of the faces in $T$
of length other than $4$.

\begin{lemma}[Combination of Lemma~5.3 of~\cite{trfree4} and Theorem~3.1 of~\cite{thom-torus}]\label{lemma-long}
Let $G$ be a triangle-free graph with a $2$-cell drawing in a surface other than the sphere and let $C$ be a contractible
cycle in $G$ not bounding a face.  If $G$ is $4$-critical, then the following claims hold.
\begin{itemize}
\item $|C|\ge 6$,
\item if $S(\dual{\inter}(C))\neq\emptyset$, then $|C|\ge 7$,
\item if $|S(\dual{\inter}(C))|\ge 2$, then $|C|\ge 8$, and
\item if $|S(\dual{\inter}(C))|\ge 3$, then $|C|\ge 9$.
\end{itemize}
\end{lemma}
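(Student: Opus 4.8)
The plan is to reduce the statement to the known structure of $C$-critical plane triangle-free graphs, applied to the part of $G$ lying inside $C$. Let $\Delta$ be the closed disk bounded by $C$, let $H$ be the subgraph of $G$ drawn in $\Delta$, and let $J$ be the subgraph of $G$ drawn in the closure of the complement of the open disk bounded by $C$, so that $H\cap J=C$ and $H\cup J=G$. Since $G$ is $4$-critical it is connected, and any component of $H$ avoiding $C$ would also avoid $J$ and hence be a component of $G$, a contradiction; thus $H$ is a connected triangle-free plane graph whose outer face is bounded by $C$, and the faces of $H$ other than the outer one are exactly the faces of $G$ forming $\dual{\inter}(C)$. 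Because $C$ does not bound a face of $G$, it bounds no face of $H$ either, so $H\neq C$.

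Next I would show that $H$ is $C$-critical for $3$-coloring. Let $e$ be an edge of $H$ not on $C$; then $e$ is drawn strictly inside $\Delta$, so $e\notin E(J)$. Since $G$ is $4$-critical, $G-e$ has a $3$-coloring $\varphi$; set $\psi=\varphi|_{V(C)}$. Then $\psi$ extends to $H-e$ (by $\varphi|_{V(H)}$) and to $J$ (by $\varphi|_{V(J)}$), but $\psi$ does not extend to $H$: an extension $\psi_H$ of $\psi$ to $H$ would agree with $\varphi|_{V(J)}$ on $V(H)\cap V(J)=V(C)$, so the two would combine into a $3$-coloring of $G$. The same argument applied to $G-v$ shows that for every vertex $v$ of $H$ not on $C$ some $3$-coloring of $C$ extends to $H-v$ but not to $H$. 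Hence for every proper subgraph $H'$ of $H$ containing $C$ there is a $3$-coloring of $C$ extending to $H'$ but not to $H$; that is, $H$ is $C$-critical.

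Now I would invoke the cited results. By Theorem~3.1 of~\cite{thom-torus}, every $C$-critical plane triangle-free graph with outer cycle $C$ satisfies $|C|\ge 6$; by Lemma~5.3 of~\cite{trfree4}, if it has at least one, at least two, or at least three internal faces of length other than $4$, then $|C|\ge 7$, $|C|\ge 8$, or $|C|\ge 9$, respectively. Since the internal faces of $H$ are precisely the faces in $\dual{\inter}(C)$, the multiset $S(\dual{\inter}(C))$ coincides with the multiset of lengths, other than $4$, of internal faces of $H$, and all four bullets follow immediately.

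The point requiring care — and the only real obstacle — is that it must be $H$ itself, the entire part of $G$ inside $C$, and not merely some proper subgraph of it, that is $C$-critical: a proper critical subgraph could merge faces and thereby change $S(\dual{\inter}(C))$. This is exactly where $4$-criticality of $G$ is used, since it forces every edge and interior vertex of $H$ to be relevant for some precoloring of $C$, so no pruning is possible; and the hypothesis that $C$ bounds no face is what ensures $H\neq C$, making $C$-criticality non-vacuous. Granting the two structural results of~\cite{thom-torus} and~\cite{trfree4}, the remainder is bookkeeping.
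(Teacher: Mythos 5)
The paper states this lemma purely as a citation (``Combination of Lemma~5.3 of~\cite{trfree4} and Theorem~3.1 of~\cite{thom-torus}'') with no proof, and your argument supplies exactly the implicit reduction one would use: restrict to the subgraph $H$ inside the disk bounded by $C$, verify via $4$-criticality of $G$ that $H$ is $C$-critical (and $H\neq C$ since $C$ bounds no face), and then apply the two cited results. Your proof is correct and is the intended route.
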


Next, we give several properties of constraints formed by generalized chords.
Let $(G_0,C_1,C_2,\gamma)$ be obtained from a $4$-critical triangle-free graph on torus by cutting along a non-contractible
cycle, let $U$ be a (possibly empty) set of odd-length faces of $G_0$, let $k$ and $t$ be positive integers, and let $B$ be a subpath of $C_i$
for some $i\in\{1,2\}$.  We say that $U$ is \emph{$(t,k)$-tied to (the subpath $B$ of) $C_i$} if $|E(B)|=t$ and $C_i$ has a generalized
chord $R$ of length $k$ with base $B$ such that $U$ is exactly the set of odd-length faces contained in $\dual{\inter}(R)$.
For a positive integer $n$, we say that $U$ is \emph{strongly $(\le\!n)$-tied to $C_i$} if $U$ is $(k,k)$-bound to $C_i$ for
some $k\le n$, and $U$ is \emph{$n$-loose with respect to $C_i$} otherwise.
A face $f$ of $G_0$ is \emph{$k$-near} to $C_i$ if $f\in \dual{\inter}(Q)$ for some generalized chord $Q$ of $C_i$ of length at most $k$.

\begin{lemma}\label{lemma-ties}
Let $G$ be a $4$-critical triangle-free graph with a $2$-cell drawing in the torus of edge-width at least $15$,
and let $C$ be a shortest non-contractible cycle
in $G$.  Let $(G_0,C_1,C_2,\gamma)$ be obtained from $G$ by cutting along $C$, and fix $i\in\{1,2\}$.
Let $U$ be a set of odd-length faces of $G_0$ that is $(t,k)$-tied to a subpath $Q$ of $C_i$.
Then the following claims hold.
\begin{itemize}
\item[(a)] $t\le k$
\item[(b)] If $|U|=2$ and $t<k\le 6$, then $(t,k)\in\{(3,5),(2,6),(4,6)\}$.
\item[(c)] If $|U|=2$, $t<k\le 6$, $U$ is also $(t',k')$-tied to another subpath $Q'$ of $C_i$, $t'<k'\le 6$, and $U$ is $7$-loose
with respect to $C_i$,
then $Q\subseteq Q'$ or $Q'\subseteq Q$, or $U$ is $(3,5)$-tied to either $Q\cup Q'$ or $Q\cap Q'$ (and in particular the union or intersection
is a path of length three).
\item[(d)] If $|U|=2$, $k=t\le 7$, $U'$ is a set of odd-length faces of $G_0$ such that $|U'|=2$, and $|U\cap U'|=1$,
then $U'$ is $7$-loose with respect to $C_i$.
\end{itemize}
\end{lemma}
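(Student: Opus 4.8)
\noindent\emph{Proof plan.} I would prove the four parts in order, using three facts throughout. (1) Since $G$ has edge-width at least $15$, no non-contractible closed walk of $G$ has length $\le 14$; via $\gamma$ this forces any generalized chord $R$ of $C_i$ of length at most $7$ to be disjoint from $C_{3-i}$, so $\gamma(K_R)$ is a simple cycle of $G$ of length $|E(B)|+|E(R)|$ (where $B$ is the base of $R$) which bounds the disk $\gamma(\dual{\inter}(R))$; since $f_1,f_2\notin\dual{\inter}(R)$, this disk is made of genuine faces of $G$, so $\gamma(K_R)$ is contractible in $G$ and, as soon as $|\dual{\inter}(R)|\ge 2$, does not bound a face, whence Lemma~\ref{lemma-long} applies to it. (2) For a cycle $K$ bounding a disk in a plane graph, the number of odd-length faces inside $K$ is congruent to $|E(K)|$ modulo $2$ (sum the face lengths inside the disk). (3) By Theorem~\ref{thm-dvopek}, $S(G)$ is torus-realizable; in particular every odd-length face of $G_0$ has length $5$ or $7$, there are at most four of them, and at most two unless $S(G)=\{5,5,5,5\}$.

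For part (a), let $R$ realize the tie, with base $Q$, and let $Q^{c}$ be the arc of $C_i$ complementary to $Q$, so $|E(Q^{c})|=|C|-t$. Working over $\mathbf{Z}_2$ in $H_1$ of the torus, $[\gamma(K_R)]+[\gamma(R\cup Q^{c})]=[\gamma(C_i)]=[C]\ne 0$, while $[\gamma(K_R)]=0$ since $K_R$ is $(f_1,f_2)$-contractible; hence $\gamma(R\cup Q^{c})$ is a non-contractible closed walk of $G$, so it contains a non-contractible cycle, of length at most $|E(R)|+|E(Q^{c})|=k+|C|-t$. Minimality of $C$ gives $|C|\le k+|C|-t$, i.e.\ $t\le k$. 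For part (b), as $|U|=2$ the disk $\dual{\inter}(R)$ contains at least two faces of $G_0$, so by fact (1) the cycle $\gamma(K_R)$ is non-facial and contractible with exactly two odd faces inside; Lemma~\ref{lemma-long} gives $t+k=|E(Q)|+|E(R)|\ge 8$, and fact (2) gives that $t+k$ is even. Together with the hypothesis $t<k\le 6$, an easy enumeration (with $t\ge 1$) leaves exactly $(t,k)\in\{(3,5),(2,6),(4,6)\}$.

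For part (c), let $R,R'$ realize the two ties, with bases $Q,Q'$; by part (b), $(t,k),(t',k')\in\{(3,5),(2,6),(4,6)\}$, so $t,t'\le 4$ and $k,k'\le 6$, and both disks $D=\dual{\inter}(R)$, $D'=\dual{\inter}(R')$ have odd faces exactly $U$. A key point: $D\setminus D'$ and $D'\setminus D$ contain no odd face, so $D\cup D'$ and $D\cap D'$ also have odd faces exactly $U$. If $Q\subseteq Q'$ or $Q'\subseteq Q$ we are done. If $Q$ and $Q'$ share no subpath of positive length, then the chords $R,R'$ must cross (neither $D\subseteq D'$ nor $D'\subseteq D$ is possible), and from a crossing one extracts a short path of $G_0$ between the two arcs of $C_i$; completing it with an arc of $C_i$ and analysing the resulting cycle via Lemma~\ref{lemma-long}, fact (2), parts (a)--(b) and the $7$-looseness of $U$ leads to a contradiction. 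Otherwise $P_{\cup}=Q\cup Q'$ and $P_{\cap}=Q\cap Q'$ are subpaths of $C_i$ with $|E(P_{\cup})|+|E(P_{\cap})|=t+t'\le 8$ and $|E(P_{\cup})|\ge 3$; tracing the boundary of the plane region $D\cup D'$ (resp.\ $D\cap D'$) yields a generalized chord of $C_i$ with base $P_{\cup}$ (resp.\ $P_{\cap}$) whose interior again has odd faces exactly $U$. Bounding the lengths of these chords in terms of $k,k'$ and the overlap, applying part (b) and fact (2) to the resulting ties, and using that $U$ is $7$-loose (which forbids any balanced $(m,m)$-tie with $m\le 7$) should eliminate every surviving option except a $(3,5)$-tie with base $P_{\cup}$ or $P_{\cap}$, giving the conclusion. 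For part (d), $|U\cup U'|\ge 3$ forces, by fact (3), $S(G)=\{5,5,5,5\}$: $G_0$ has exactly four odd faces, all $5$-faces; write $U=\{u_1,u_2\}$, $U'=\{u_1,u_3\}$ with $u_2\ne u_3$. Suppose for contradiction $U'$ is strongly $(\le 7)$-tied, say $(t',t')$-tied via a chord $R'$ of length $t'\le 7$; use also the $(t,t)$-tie of $U$ via a chord $R$ of length $t\le 7$. By part (a), or by Lemma~\ref{lemma-long} applied to the non-facial cycle $\gamma(K_R)$ (two odd faces inside), $t,t'\ge 4$. The disks $\dual{\inter}(R)$ and $\dual{\inter}(R')$ share $u_1$ but neither contains the other (as $u_2\notin\dual{\inter}(R')$, $u_3\notin\dual{\inter}(R)$), so their bounding cycles cross; merging them as in part (c) produces a contractible cycle of $G$ bounding a disk that contains $u_1,u_2,u_3$ — at least three odd faces. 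Using $t,t'\le 7$ together with the large edge-width to control how the two disks fit along $C_i$, I would bound the length of this cycle strictly below $9$, contradicting Lemma~\ref{lemma-long}. Hence $U'$ is $7$-loose with respect to $C_i$.

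The short parts are (a) and (b). The main obstacle will be (c) and (d): controlling the planar topology of two overlapping or crossing chord-disks that enclose the same set of odd faces, and in every configuration performing surgery that produces a single \emph{simple} generalized chord (or simple cycle) of a precisely computed length, so that Lemma~\ref{lemma-long} and the parity fact can be applied. The overlap/chord-length bookkeeping, and the correct use of the $7$-loose versus strongly-tied dichotomy to prune the case analysis, is where essentially all the work lies.
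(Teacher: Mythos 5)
Your proofs of parts (a) and (b) are correct and essentially identical to the paper's: (a) is the ``reroute and use minimality of $C$'' argument (the paper phrases it via homotopy, you via $\mathbf{Z}_2$ homology, but it is the same computation), and (b) is the same parity-plus-Lemma~\ref{lemma-long} argument.

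For parts (c) and (d), however, your writeup is a sketch with genuine gaps, and you say so yourself (``should eliminate every surviving option'', ``I would bound the length of this cycle strictly below $9$''). The core missing idea is this: when two chord-disks $\Delta$ and $\Delta'$ overlap, the boundary of $\Delta\cup\Delta'$ or $\Delta\cap\Delta'$ is in general \emph{not} a single simple cycle or generalized chord of easily controlled length; the chords $R,R'$ may cross several times and share edges, producing a union/intersection with a disconnected or long boundary. Your plan for (d) in particular requires a contractible cycle of length $<9$ enclosing three odd faces, but with $t,t'\ge 4$ both $K_R$ and $K_{R'}$ already have length $\ge 8$, and there is no reason the boundary of their overlap is shorter than either one; you give no mechanism for producing a length-$<9$ cycle. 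The paper sidesteps this entirely: instead of performing surgery to get one cycle, it works with the plane graph $G'=C_i\cup R\cup R'$, sums face lengths over $D_0\cup D_2$ (the faces outside both disks plus those inside both), and obtains the upper bound $\sum_{f\in D}|f|\le 2|C|+d+d'+2|E(Q\cap Q')|$ by a direct edge-count (each interior edge of $R\cup R'$ is counted at most once, each edge of $C_i$ at most twice, etc.). It then derives lower bounds on the left side from $|f_i|=|C|$, $|f_0|\ge|C|$, and Lemma~\ref{lemma-long}/parity applied to the faces of $D_2$ (crucially, $\ell_2\equiv|U\cap U'|\pmod 2$ kills (d) in one line since $d=d'=0$ and $|U\cap U'|=1$ forces $\ell_2>2|E(Q\cap Q')|$). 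Also note the paper's proof of (d) does not invoke $S(G)=\{5,5,5,5\}$ at all; your restriction to that case is an unnecessary detour. To finish your route you would need to prove, under control of all possible crossing patterns of $R$ and $R'$, that the boundary components of the union/intersection are simple generalized chords of the claimed lengths --- which is essentially the bookkeeping the paper's inequality encodes, and which your proposal leaves open.
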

\begin{proof}
Let $R$ be a generalized chord of $C_i$ of length $k$ with base $Q$ such that $U$ is the set of odd-length faces
of $\dual{\inter}(R)$.  Let $C'$ be the closed walk in $G$ obtained from $C$ by replacing
$\gamma(Q)$ by $\gamma(R)$.  Note that $C'$ is homotopically equivalent to $C$, and thus $C'$ is non-contractible.
Since $C$ is a shortest non-contractible cycle in $G$, we have $|C|-t+k=|C'|\ge |C|$, and thus $t\le k$.  Consequently, (a) holds.

In the cases (b), (c), and (d), $\dual{\inter}(R)$ contains exactly two odd-length faces, and thus $k+t$ is even.
Furthermore, by Lemma~\ref{lemma-long}, $k+t\ge 8$.  Hence, if $t<k\le 6$, then $(t,k)\in\{(3,5),(2,6),(4,6)\}$.
Therefore, (b) holds.

For the cases (c) and (d), let $R'$ be a generalized chord of $C_i$ of length $k'$ with base $Q'$ of length $t'$, with $U'$ being the set of odd-length faces
of $\dual{\inter}(R')$, where
\begin{itemize}
\item in the case (c) $U=U'$, and 
\item in the case (d) we for contradiction assume $t'=k'\le 7$.
\end{itemize}
Let $d=k-t$ and $d'=k'-t'$, so that $d=d'=0$ in case (d) and $d,d'\in\{2,4\}$ in case (c) by (b).
Let $f_1$ and $f_2$ be the faces of $G_0$ bounded by $C_1$ and $C_2$.
Let $\Delta$ and $\Delta'$ be the disks in the plane bounded by $R\cup Q$ and $R'\cup Q'$, respectively.
Consider the plane graph $G'=C_i\cup R\cup R'$. Let $D_0$ denote the set of faces of $G'$ not contained in $\Delta\cup \Delta'$,
and let $D_2$ denote the set of faces of $G'$ contained in $\Delta\cap \Delta'$; let $D=D_0\cup D_2$.
Note that if an edge $e\in E(R\cup R')$ is in the boundaries of two faces of $D$, then $e\in E(R\cap R')$.  Furthermore, an edge $e$ of $C_i$
is incident with a face of $D$ distinct from $f_i$ if and only if $e$ is contained
in either both or neither of $Q$ and $Q'$.
Hence,
\begin{equation}\label{eq-lbint}
\sum_{f\in D}|f|\le k+k'+2|C|-(t-|E(Q\cap Q')|)-(t'-|E(Q\cap Q')|)=2|C|+d+d'+2|E(Q\cap Q')|.
\end{equation}
Let $f_0$ denote the face of $G'$ containing $f_{3-i}$;
clearly, $f_0\in D_0$, and since $C$ is a shortest non-contractible cycle in $G$, we have $|f_0|\ge |C|\ge 15>k+k'$.
Hence, the boundary of $f_0$ intersects $C_i$ in at least one edge.

Let $\ell_2=\sum_{f\in D_2} |f|$.  If $E(Q\cap Q')$ is non-empty, then the face of $G'$ distinct from $f_i$ whose boundary contains $E(Q\cap Q')$
belongs to $D_2$ and $\ell_2\ge 2|E(Q\cap Q')|$ by (a).
If $E(Q\cap Q')=\emptyset$, then $\ell_2\ge 2|E(Q\cap Q')|$ trivially.  Furthermore,
the odd-length faces of $G$ drawn in $\Delta\cap \Delta'$ are exactly those belonging to $U\cap U'$, and thus
$\ell_2$ and $|U\cap U'|$ have the same parity.  Hence, $\ell_2$ is odd in the case (d), implying
$\ell_2>2|E(Q\cap Q')|$, and since $d=d'=0$, (\ref{eq-lbint}) gives
$$2|C|+2|E(Q\cap Q')|\ge \sum_{f\in D}|f|\ge |f_i|+|f_0|+\ell_2> 2|C|+2|E(Q\cap Q')|,$$
which is a contradiction.  Therefore, (d) holds.

From now on, we assume the case (c).
Then two odd-length faces of $G$ are drawn in $\Delta\cap \Delta'$, and Lemma~\ref{lemma-long} implies $\ell_2\ge 8$.
We can assume that $Q\not\subseteq Q'$ and $Q'\not\subseteq Q$, and thus $|E(Q\cap Q')|\le \min(t,t')-1\le 3$.
Let $\ell_0=\sum_{f\in D_0\setminus \{f_i,f_0\}} |f|$.
By (\ref{eq-lbint}), we have
\begin{equation}\label{eq-cmp}
2|C|+\ell_0+8\le |f_i|+|f_0|+\ell_0+\ell_2=\sum_{f\in D}|f|\le 2|C|+d+d'+2|E(Q\cap Q')|.
\end{equation}
If $D_0\neq\{f_i,f_0\}$, then $\ell_0\ge 4$, and (\ref{eq-cmp}) implies $d+d'+2|E(Q\cap Q')|\ge 12$.
Since $|E(Q\cap Q')|\le 3$, it follows that $\max(d,d')\ge 3$, and thus $\min(t,t')=2$.
But then $|E(Q\cap Q')|\le \min(t,t')-1=1$, and the same argument gives $\max(d,d')\ge 5$, which is a contradiction.
We conclude that $D_0=\{f_i,f_0\}$, and consequently $V(Q\cap Q')\neq \emptyset$, as otherwise $C_i$ has
two non-empty subpaths edge-disjoint from $Q\cup Q'$, each incident with a distinct face of $D_0\setminus\{f_i\}$.
Since the boundary of $f_0$ contains an edge of $C_i$, we conclude both $Q\cup Q'$ and $Q\cap Q'$ are non-empty connected
subpaths of $C_i$ (with $Q\cap Q'$ possibly consisting of a single vertex).
Let $T$ denote the part of the boundary of $f_0$ edge-disjoint from $C_i$.  Since $\Delta$ and $\Delta'$ are not disjoint,
observe that $T$ is a path intersecting $C_i$ only in its endpoints; hence, $T$ is a generalized chord of $C_i$ with base $Q\cup Q'$,
and $\dual{\inter}(T)=U$.

We claim that $|E(Q\cup Q')|\le 7$.  Indeed, since $t,t'\le 4$, we could have $|E(Q\cup Q')|>7$ only if $t=t'=4$
and $E(Q)\cap E(Q')=\emptyset$; but then $d,d'=2$, contradicting (\ref{eq-cmp}).
Since $U$ is $7$-loose with respect to $C_i$, it follows that $|E(T)|>|E(Q\cup Q')|$.
Furthermore, since $U$ consists of the odd-length faces in $\dual{\inter}(T)$ and $|U|$ is even,
$|E(T)|$ and $|E(Q\cup Q')|$ have the same parity, and thus $|E(T)|\ge |E(Q\cup Q')|+2$.
Note that $|f_0|=|C|+|E(T)|-|E(Q_1\cup Q_2)|$, and thus as in (\ref{eq-cmp}), we have
\begin{align}
2|C|+10&\le 2|C|+|E(T)|-|E(Q_1\cup Q_2)|+8\le |f_i|+|f_0|+\ell_2\nonumber\\
&\le \sum_{f\in D}|f|\le 2|C|+d+d'+2|E(Q\cap Q')|\label{eq-cmp2}.
\end{align}
Since $d,d'\le 4$, it follows that $|E(Q\cap Q')|\ge 1$.

If $|E(Q\cap Q')|=1$, then (\ref{eq-cmp2}) shows that $d=d'=4$ (and thus $t=t'=2$ and $|E(Q\cup Q')|=3$)
and $|E(T)|=|E(Q\cup Q')|+2=5$, implying that $U$ is $(3,5)$-tied to $Q\cup Q'$.  
If $|E(Q\cap Q')|\ge 2$, then $t,t'\ge 3$, and thus $d,d'=2$.  Hence, (\ref{eq-cmp2}) shows that $|E(Q\cap Q')|\ge 3$.
Since $t,t'\le 4$, we have $|E(Q\cap Q')|=3$, and thus all the inequalities in (\ref{eq-cmp2}) are tight; in particular $\ell_2=8$.
Considering the face of $G'$ distinct from $f_i$ whose
boundary contains $Q\cap Q'$, we conclude that $U$ is $(3,5)$-tied to $Q\cap Q'$.
\end{proof}

Analogously, we can prove the following bound
\begin{lemma}\label{lemma-bities}
Let $G$ be a $4$-critical triangle-free graph with a $2$-cell drawing in the torus,
and let $C$ be a shortest non-contractible cycle
in $G$.  Let $(G_0,C_1,C_2,\gamma)$ be obtained from $G$ by cutting along $C$, 
let $f_1$, \ldots, $f_4$ be distinct odd-length faces of $G$, and fix $i\in\{1,2\}$.
Suppose that for $j\in\{1,2\}$, $\{f_{2j-1},f_{2j}\}$ is $(t_j,k_j)$-tied to a subpath $Q_j$ of $C_i$ and $t_j<k_j$.
Then $|E(Q_1\cap Q_2)|\le (t_1+t_2+k_1+k_2)/2-8\le k_1+k_2-10$.
\end{lemma}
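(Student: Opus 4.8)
The plan is to mimic the proof of Lemma~\ref{lemma-ties}, tracking the two pairs of odd faces simultaneously. Fix $i\in\{1,2\}$ and, for $j\in\{1,2\}$, let $R_j$ be a generalized chord of $C_i$ of length $k_j$ with base $Q_j$ (of length $t_j$) such that $\{f_{2j-1},f_{2j}\}$ is exactly the set of odd-length faces of $\dual{\inter}(R_j)=\dual{\inter}(K_{R_j})$, and let $\Delta_j$ be the open disk bounded by the cycle $K_{R_j}=R_j\cup Q_j$; recall that $E(R_j)$ is disjoint from $E(C_i)$. Put $A=\Delta_1\setminus\overline{\Delta_2}$, $B=\Delta_2\setminus\overline{\Delta_1}$, and $p=|E(Q_1\cap Q_2)|$; for a union $Z$ of faces of $G_0$ write $\partial Z$ for the set of edges of $G_0$ incident with one face in $Z$ and one face not in $Z$.

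First I would check that $A$ and $B$ are disjoint and that $f_1,f_2\in A$ while $f_3,f_4\in B$. Any face of $G_0$ lies entirely inside or entirely outside the cycle $K_{R_2}$, and $f_1,f_2\in\Delta_1$; moreover $f_1,f_2\notin\Delta_2$, since otherwise one of them would be an odd face of $\dual{\inter}(R_2)$ and hence equal to $f_3$ or $f_4$, contradicting distinctness. Symmetrically $f_3,f_4\notin\Delta_1$, so $f_1,f_2\in A$ and $f_3,f_4\in B$, and $A$ is disjoint from $\overline{\Delta_2}\supseteq B$. Since $\overline{\Delta_1}$ and $\overline{\Delta_2}$ meet only $C_i$ among $C_1,C_2$, the map $\gamma$ carries each component of $\overline A$ (and of $\overline B$) onto a disk in the torus; decomposing $\gamma(\partial A)$ and $\gamma(\partial B)$ into cycles of $G$ and applying Lemma~\ref{lemma-long} to each (every component of $A$ that is not a single face has perimeter at least $6$, every one containing an odd face has perimeter at least $5$ since $G$ is triangle-free, and one containing two odd faces has perimeter at least $8$), and recalling that $A$ contains the two odd faces $f_1,f_2$, one obtains $|\partial A|\ge8$; likewise $|\partial B|\ge8$.

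The counting then goes as follows. Every edge of $\partial A$ or $\partial B$ lies on $K_{R_1}$ or on $K_{R_2}$, hence belongs to $E(C_i)$, $E(R_1)$, or $E(R_2)$. An edge of $C_i$ lies in $\partial A$ exactly when it lies in $Q_1\setminus Q_2$, and in $\partial B$ exactly when it lies in $Q_2\setminus Q_1$, so the edges of $C_i$ contribute precisely $(t_1-p)+(t_2-p)$ to $|\partial A|+|\partial B|$. An edge of $R_1\cup R_2$ lying in both $\partial A$ and $\partial B$ would separate a face of $\Delta_1\setminus\Delta_2$ from a face of $\Delta_2\setminus\Delta_1$, hence lie on both $K_{R_1}$ and $K_{R_2}$, i.e.\ in $E(R_1)\cap E(R_2)$; so the edges of $R_1\cup R_2$ contribute at most $|E(R_1)|+|E(R_2)|=k_1+k_2$ to $|\partial A|+|\partial B|$. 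Adding, $16\le|\partial A|+|\partial B|\le(t_1-p)+(t_2-p)+k_1+k_2$, which rearranges to $p\le\tfrac12(t_1+t_2+k_1+k_2)-8$. For the final inequality, $K_{R_j}$ bounds a disk containing exactly two odd faces, so $t_j+k_j=|K_{R_j}|$ is even; together with $t_j<k_j$ this forces $k_j-t_j\ge2$, whence $t_1+t_2\le k_1+k_2-4$ and $\tfrac12(t_1+t_2+k_1+k_2)-8\le k_1+k_2-10$.

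The hard part will be making the topology around $A=\Delta_1\setminus\overline{\Delta_2}$ and $B=\Delta_2\setminus\overline{\Delta_1}$ precise when $R_1$ and $R_2$ are in bad position: when they cross, share edges or vertices, when $\overline{\Delta_j}$ touches $C_{3-i}$, or when $A$ or $B$ is disconnected or has a non-simple boundary walk. The single-intersection-region argument of Lemma~\ref{lemma-ties} does not transcribe verbatim, so one must verify case by case (or via a clean topological lemma) that $|\partial A|,|\partial B|\ge8$ and $|\partial A|+|\partial B|\le t_1+t_2+k_1+k_2-2p$ persist. The shortest-cycle property of $C$ and the structural input of Theorem~\ref{thm-dvopek} — in particular that four distinct odd faces force $S(G)=\{5,5,5,5\}$ — are available and streamline several of these cases.
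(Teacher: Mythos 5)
Your proof is essentially the same as the paper's: the paper likewise takes generalized chords $R_1,R_2$, forms $G'=C_i\cup R_1\cup R_2$, sets $\ell_j$ to be the total boundary length of the faces of $G'$ in $\Delta_j\setminus\Delta_{3-j}$, derives $\ell_j\ge 8$ from Lemma~\ref{lemma-long} (using, as you do, that the two odd faces of $\dual{\inter}(R_j)$ lie in $\Delta_j\setminus\Delta_{3-j}$ by distinctness), obtains $\ell_1+\ell_2\le k_1+k_2+t_1+t_2-2|E(Q_1\cap Q_2)|$ by the same edge-incidence count, and finishes with the same parity observation $t_j\le k_j-2$. Your $|\partial A|+|\partial B|$ accounting is just a slight repackaging of the paper's $\ell_1+\ell_2$, and the topological caveats you flag at the end are handled no more explicitly in the paper than in your write-up, so the level of rigor matches.
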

\begin{proof}
For $j\in\{1,2\}$, let $R_j$ denote a generalized chord of $C_i$ of length $k_j$ with base $Q_j$ and
with $\{f_{2j-1},f_{2j}\}$ being exactly the odd-length faces in $\dual{\inter}(R_j)$, let $\Delta_j$ denote
the open disk bounded by $Q_j\cup R_j$, let $D_j$ denote the set of faces of $G'=C_i\cup R_1\cup R_2$
drawn in $\Delta_j\setminus\Delta_{3-j}$, and let $\ell_j$ be the sum of the lengths of these faces.
Since $f_{2j-1}$ and $f_{2j}$ are contained in $\Delta_j\setminus\Delta_{3-j}$, Lemma~\ref{lemma-long} implies $\ell_j\ge 8$.
Note that if an edge $e$ is in boundaries of two faces of $D_1\cup D_2$, then $e\in E(R_1\cap R_2)$.  Furthermore, an edge $e$ of $C_i$
is incident with a face of $D$ if and only if $e$ is contained in exactly one of $Q_1$ and $Q_2$.
Hence,
$$16\le \ell_1+\ell_2\le k_1+k_2+|E(Q_1)|+|E(Q_2)|-2|E(Q_1\cap Q_2)|.$$
Note that $t_j=|E(Q_j)|$ and $k_j$ have the same parity, since $\dual{\inter}(R_j)$ contains exactly two odd-length faces,
and thus $t_j\le k_j-2$.  This implies the inequality from the statement of the lemma.
\end{proof}

Let $G$ be a $4$-critical triangle-free graph with a $2$-cell drawing in the torus,
and let $C$ be a non-contractible cycle in $G$.  Let $(G_0,C_1,C_2,\gamma)$ be obtained from $G$ by cutting along $C$,
and let $f_1$ and $f_2$ be the faces of $G_0$ bounded by $C_1$ and $C_2$.
We now select a function $\dual{d}:F(G_0)\to \mathbb{Z}$ so that the sources and sinks are in absolute value as small
as possible and pairs of faces strongly $(\le\!7)$-tied to $C_1$ or $C_2$ are assigned opposite values.
More precisely, we say that $\dual{d}$ is a \emph{standard assignment of sources and sinks} if $\dual{d}(F(G_0))=0$, $\dual{d}(f)=0$ for every even-length
face $f\in F(G_0)$, $\dual{d}(f)\in\{-3,3\}$ for every odd-length face $f\in F(G_0)$,
$\dual{d}(f_1)\in\{0,3\}$ and $\dual{d}(f_2)=-\dual{d}(f_1)\in\{0,-3\}$, and
$\dual{d}(U)=0$ for every two-element set $U$ strongly $(\le\!7)$-tied to $C_1$ or $C_2$.
Let us remark that the last condition is useful in arguing $\gain{\dual{d},\psi}(R)\ge 0$ for generalized chords $R$ with bases of length $|R|$.
Note also that a standard assignment is even, $3|\dual{d}$ and $|\dual{d}(f)|\le |f|$ for every $f\in F(G_0)$.

\begin{lemma}\label{lemma-exd}
Let $G$ be a $4$-critical triangle-free graph with a $2$-cell drawing in the torus of edge-width at least $15$,
and let $C$ be a shortest non-contractible cycle in $G$.  Let $(G_0,C_1,C_2,\gamma)$ be obtained from $G$ by cutting along $C$.
Then there exists a standard assignment of sources and sinks $\dual{d}:F(G_0)\to\mathbb{Z}$.
\end{lemma}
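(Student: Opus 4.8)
The plan is to set $\dual d$ to $0$ on every even-length face, so that the only remaining freedom is the choice of $\dual d\in\{-3,3\}$ on the set $\mathcal O$ of odd-length faces of $G_0$; identifying the value $3$ with a symbol $+$ and $-3$ with $-$, what I need is a $2$-coloring of $\mathcal O$ that (i) uses each of $+,-$ on exactly half of $\mathcal O$, so that $\dual d(F(G_0))=0$, (ii) if $|C|$ is odd, colors $f_1$ with $+$ and $f_2$ with $-$, and (iii) colors every two-element set strongly $(\le\!7)$-tied to $C_1$ or to $C_2$ with both symbols. By Theorem~\ref{thm-dvopek} the multiset $S(G)$ is one of $\emptyset$, $\{5,5\}$, $\{5,7\}$, $\{5,5,6\}$, $\{5,5,5,5\}$, so $G$ has $0$, $2$ or $4$ odd-length faces; the faces of $G_0$ are those of $G$ together with $f_1$ and $f_2$, which have length $|C|\ge 15$ and are odd precisely when $|C|$ is odd. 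Hence $\mathcal O$ has even cardinality at most $6$, with $f_1,f_2\in\mathcal O$ if and only if $|C|$ is odd, and with at most four members other than $f_1,f_2$.

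The heart of the argument is to analyze the conflict graph $\mathcal J$ on vertex set $\mathcal O$ whose edges are the two-element sets strongly $(\le\!7)$-tied to $C_1$ or to $C_2$; write $M_i$ for the set of such pairs tied to $C_i$. First I would show that $M_1$ and $M_2$ are matchings: if $\{a,b\}$ and $\{a,c\}$ with $b\ne c$ both lay in $M_i$, then $U=\{a,b\}$ would be $(k,k)$-tied to $C_i$ for some $k\le 7$ and the two-element set $U'=\{a,c\}$ would satisfy $|U\cap U'|=1$, so Lemma~\ref{lemma-ties}(d) would force $U'$ to be $7$-loose with respect to $C_i$, contradicting $\{a,c\}\in M_i$. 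Thus $\mathcal J=M_1\cup M_2$ has maximum degree at most $2$; moreover, along any cycle of $\mathcal J$ the two edges at a vertex cannot both lie in the same matching, so the edges of the cycle alternate between $M_1$ and $M_2$ and the cycle has even length. Hence $\mathcal J$ is a disjoint union of paths and even cycles, in particular bipartite. Finally, since $f_2$ is the outer face of $G_0$, a generalized chord $R$ of $C_1$ or $C_2$ has $\dual{\inter}(R)$ equal to $\dual{\inter}(K_R)$ for an $(f_1,f_2)$-contractible cycle $K_R$, and so contains neither $f_1$ nor $f_2$; therefore $f_1$ and $f_2$ are isolated vertices of $\mathcal J$.

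With this in hand I would finish by explicitly producing the coloring. If $|C|$ is odd I color the isolated vertices $f_1$ and $f_2$ with $+$ and $-$ respectively (a balanced coloring of $\{f_1,f_2\}$); if $|C|$ is even, then $f_1,f_2\notin\mathcal O$. In both cases the remaining elements of $\mathcal O$ induce a subgraph $\mathcal J'$ of $\mathcal J$ on an even number of vertices, at most four, which is again a disjoint union of paths and even cycles. A short inspection of the possible component structures on at most four vertices---isolated vertices, a single edge $P_2$, a path $P_3$, a path $P_4$, a four-cycle, and disjoint unions thereof---shows that $\mathcal J'$ has a proper $2$-coloring whose color classes have equal size: $P_2$, $P_4$ and the four-cycle are balanced by themselves, a $P_3$ has a proper $2$-coloring with classes of sizes $2$ and $1$ whose minority color is given to the one remaining (necessarily isolated) vertex, and isolated vertices can be split evenly. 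Combining this coloring of $\mathcal J'$ with the one of $f_1,f_2$ yields a balanced proper $2$-coloring of $\mathcal O$, which is exactly the $\dual d$ we want: $\dual d(F(G_0))=0$, $\dual d(f)=0$ on even faces and $\pm3$ on odd faces, $\dual d(f_1)\in\{0,3\}$ with $\dual d(f_2)=-\dual d(f_1)$, and $\dual d(U)=0$ for every two-element set $U$ strongly $(\le\!7)$-tied to $C_1$ or $C_2$.

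The step I expect to be the main obstacle is the structural claim that $\mathcal J$ is a union of two matchings and hence bipartite with maximum degree at most $2$; this is precisely what Lemma~\ref{lemma-ties}(d) is designed to give, and once it is established the balance requirement reduces to the elementary finite check above, made possible by the bound on the number of odd-length faces furnished by Theorem~\ref{thm-dvopek}.
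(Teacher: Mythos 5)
Your proof is correct, and while it reaches the same conclusion, it takes a modestly different route from the paper. The paper's argument adds one further structural observation that eliminates the case analysis: since $|C|\ge 15$ and $C$ is a shortest non-contractible cycle, no face of $G_0$ is $7$-near to both $C_1$ and $C_2$ (generalized chords of $C_1$ and $C_2$ with intersecting interiors would contain two edge-disjoint paths of total length less than $|C|$ between $C_1$ and $C_2$, contradicting minimality of $C$). Combined with Lemma~\ref{lemma-ties}(d), this shows the conflict graph is in fact a \emph{partial matching}, i.e., has maximum degree at most $1$, and a balanced $\pm 3$ assignment is then immediate. You establish only the weaker fact that the conflict graph is a union of the two matchings $M_1$ and $M_2$---bipartite with maximum degree at most $2$---and then complete the argument via a finite check over the at most four relevant odd faces guaranteed by Theorem~\ref{thm-dvopek}. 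Your finite check is exhaustive and correct, and your identification of Lemma~\ref{lemma-ties}(d) as the crux is exactly right; the one observation you did not use (and did not need here, though the paper also relies on it later in the proof of Lemma~\ref{lemma-ew}) is the $7$-nearness disjointness that collapses the conflict graph to a matching.
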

\begin{proof}
Let $f_1$ and $f_2$ be the faces of $G_0$ bounded by $C_1$ and $C_2$.
The values of $\dual{d}$ are exactly determined by the definition of a standard assignment except on
odd faces distinct from $f_1$ and $f_2$.  We need to choose the values on these odd faces so that
$\dual{d}(F(G_0))=0$ and the last condition from the definition holds.

Note that $G_0$ does not contain a path $P$ of length less than $|C|/2$ between $C_1$ and $C_2$, as otherwise one of the three
non-contractible cycles in $C\cup \gamma(P)$ is shorter than $C$.
If $R_i$ is a generalized chord of $C_i$ for $i\in\{1,2\}$ and $\dual{\inter}(R_1)\cap \dual{\inter}(R_2)\neq \emptyset$, then
$R_1\cup R_2$ contains two edge-disjoint paths from $C_1$ to $C_2$, and thus $(|E(R_1)|+|E(R_2)|)/2\ge |C|/2$.  Since $|C|\ge 15$,
we conclude that each face of $G_0$ is $7$-near to at most one of $C_1$ and $C_2$.  Let $D$ be an auxiliary graph whose
vertices are the odd-length faces of $G_0$ distinct from $f_1$ and $f_2$ and $ff'\in E(D)$ if and only if $\{f,f'\}$ is strongly
$(\le\!7)$-tied to $C_1$ or $C_2$.  Clearly, $|V(D)|$ is even.  By Lemma~\ref{lemma-ties}(d), $D$ has maximum degree at most~$1$, and thus $D$ is a partial matching.
For each edge $ff'$ of $D$, set $\dual{d}(f)=3$ and $\dual{d}(f')=-3$.  Finally, choose $\dual{d}(f)\in\{-3,3\}$ for every
isolated vertex $f$ of $D$ so that $\dual{d}(F(G_0))=0$; this is possible, since the number of these vertices is even.
\end{proof}

Let $\vec{C}$ be an orientation of a cycle $C$ and let $\psi$ be a $3$-coloring of $C$.
In order to maximize $\gain{\dual{d},\psi}$ for generalized chords, it is convenient if $\Bigl|\int_Q \delta_{\vec{C},\psi}\Bigr|$ is
relatively small for all subpaths $Q$ of $C$.
We say that $\psi$ is \emph{tame} if $\Bigl|\int_Q \delta_{\vec{C},\psi}\Bigr|\le 2$ for
every subpath $Q$ of $C$ of length at most $5$.
Since $\int_Q \delta_{\vec{C},\psi}$ and $|E(Q)|$ have the same parity,
if $|E(Q)|\in\{1,3,5\}$, then $\Bigl|\int_Q \delta_{\vec{C},\psi}\Bigr|=1$.
Any longer subpath $Q$ of $C$ can be partitioned into paths of length~$5$
and one path of length $|E(Q)|\bmod 5$, giving us the following bound.
\begin{observation}\label{lemma-tamevalues}
Let $\psi:V(C)\to \{0,1,2\}$ be a coloring of a cycle $C$, let $\vec{C}$ be an orientation of $C$,
and let $Q$ be a subpath of $C$.  If $\psi$ is tame, then
$$\Bigl|\int_Q \delta_{\vec{C},\psi}\Bigr|\le \lfloor |E(Q)|/5\rfloor+m(|E(Q)|\bmod 5),$$
where
$m(0)=0$, $m(1)=m(3)=1$, and $m(2)=m(4)=2$.
\end{observation}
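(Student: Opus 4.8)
The plan is to decompose $Q$ into $\lfloor |E(Q)|/5\rfloor$ subpaths of length exactly $5$ together with one leftover subpath of length $|E(Q)|\bmod 5$, use additivity of $\int$ over concatenations of paths, and bound each piece separately using the definition of tameness together with a parity observation. Since $Q$ is a subpath of $C$, its length is at most $|E(C)|$, so no case with leftover length $\ge 5$ arises.

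First I would record two elementary facts. (1) If a path $Q$ is the concatenation of paths $Q_1,\ldots,Q_j$, then $\sigma(Q,e)=\sum_{i=1}^j\sigma(Q_i,e)$ for every edge $e$, hence $\int_Q p=\sum_{i=1}^j\int_{Q_i}p$ for any function $p$; in particular, by the triangle inequality, $\bigl|\int_Q\delta_{\vec C,\psi}\bigr|\le\sum_{i=1}^j\bigl|\int_{Q_i}\delta_{\vec C,\psi}\bigr|$. (2) Since $\delta_{\vec C,\psi}(e)\in\{-1,1\}$ and a subpath of $C$ traverses each of its edges exactly once, $\int_Q\delta_{\vec C,\psi}$ is a sum of $|E(Q)|$ terms each equal to $\pm 1$, and therefore $\int_Q\delta_{\vec C,\psi}\equiv|E(Q)|\pmod 2$.

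Next I would handle the pieces. Write $|E(Q)|=5q+r$ with $0\le r\le 4$, and split $Q$ into $q$ consecutive subpaths of length $5$ followed by a final subpath $Q'$ of length $r$ (omit $Q'$ when $r=0$). For each length-$5$ subpath $Q_i$, tameness gives $\bigl|\int_{Q_i}\delta_{\vec C,\psi}\bigr|\le 2$, and since this value is odd by fact~(2), it is at most $1$. For the final piece $Q'$: if $r\in\{1,3\}$ the parity argument again gives $\bigl|\int_{Q'}\delta_{\vec C,\psi}\bigr|\le 1=m(r)$; if $r\in\{2,4\}$ tameness directly gives $\bigl|\int_{Q'}\delta_{\vec C,\psi}\bigr|\le 2=m(r)$; and if $r=0$ the bound $0=m(0)$ is trivial. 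Summing over the pieces using fact~(1) yields $\bigl|\int_Q\delta_{\vec C,\psi}\bigr|\le q+m(r)=\lfloor|E(Q)|/5\rfloor+m(|E(Q)|\bmod 5)$, as required.

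There is essentially no obstacle: all the content is already packaged in the definition of tameness and in the trivial parity remark made in the paragraph preceding the observation. The only point needing a sliver of care is to confirm that a subpath of a path in a cycle is again a path, so that the decomposition of $Q$ into shorter subpaths and their concatenation back into $Q$ is legitimate; this is immediate.
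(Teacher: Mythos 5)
Your proposal is correct and follows essentially the same route the paper sketches: decompose $Q$ into $\lfloor|E(Q)|/5\rfloor$ length-$5$ pieces plus a leftover of length $|E(Q)|\bmod 5$, bound each piece by combining tameness with the parity of $\int_{Q_i}\delta_{\vec C,\psi}$, and sum using additivity of $\int$ over concatenation. The only difference is that you spell out the additivity and parity steps explicitly, which the paper leaves to the reader since the statement is labeled an observation.
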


Let $G$ be a $4$-critical triangle-free graph with a $2$-cell drawing in the torus,
and let $C$ be a non-contractible cycle in $G$.  Let $(G_0,C_1,C_2,\gamma)$ be obtained from $G$ by cutting along $C$.
Let $\dual{d}$ be a standard assignment of sources and sinks.
Let $f_1$ and $f_2$ be the faces of $G_0$ bounded by $C_1$ and $C_2$.
Let $\vec{C}$ be a cyclic orientation of $C$ chosen so that the paths in $C_1$ for which $f_1$ is to their right
are mapped by $\gamma$ to paths along $\vec{C}$ (and thus, the paths in $C_2$ for which $f_2$ is to their right
are mapped by $\gamma$ to paths in the opposite direction to $\vec{C}$).  
The \emph{$(G,C,\dual{d})$-request} is the system of all pairs $(Q,s)$, where $Q$ is a subpath of $C$ directed along $\vec{C}$
and $s$ is a non-zero integer, such that one of the following holds:
\begin{itemize}
\item[(Ra)] $G_0$ has a two-element set $U$ of odd faces such that $\dual{d}(U)\neq 0$, for some $i\in\{1,2\}$,
$U$ is $(3,5)$-tied to a subpath $Q'$ of $C_i$, $\gamma(Q')$ is (up to reversal) equal to $Q$, and
$s=(-1)^i\dual{d}(U)$; or
\item[(Rb)] $G_0$ has a two-element set $U$ of odd faces such that $\dual{d}(U)\neq 0$, for some $i\in\{1,2\}$,
$U$ is $(2,6)$- or $(4,6)$-tied to a subpath $Q'$ of $C_i$, $\gamma(Q')$ is (up to reversal) equal to $Q$, and
$s=(-1)^i\dual{d}(U)$; or
\item[(Rc)] $G_0$ has a $5$-face $f$ such that for some $i\in\{1,2\}$, the boundary of $f$ intersects $C_i$ in a path
$Q'$ of length two, $\gamma(Q')$ is (up to reversal) equal to $Q$, and
$s=(-1)^i\dual{d}(f)$.
\end{itemize}
The $(G,C,\dual{d})$-request $\RR$ is \emph{satisfied} by a $3$-coloring $\psi$ of $C$ if $\int_C \delta_{\vec{C},\psi}=\dual{d}(f_1)$
and for every $(Q,s)\in \RR$, $$s\cdot\int_Q \delta_{\vec{C},\psi}\ge 0.$$  Note this means that the amount of charge sent
across $Q'$ according to $\psi$ ``compensates'' for the charge originating in $U$ or $f$ according to $\dual{d}$.
It turns out this suffices to ensure extendability of $\psi$.

\begin{lemma}\label{lemma-satisfy}
Let $G$ be a $4$-critical triangle-free graph with a $2$-cell drawing in the torus of edge-width at least $21$,
let $C$ be a shortest non-contractible cycle in $G$, and let $\vec{C}$ be a cyclic orientation of $C$.
Let $(G_0,C_1,C_2,\gamma)$ be obtained from $G$ by cutting along $C$, let $\dual{d}$ be a standard assignment of sources and sinks,
and let $\psi_0$ be a $3$-coloring of $C$.  If $\psi_0$ is tame, then $\psi_0$ does not satisfy the $(G,C,\dual{d})$-request $\RR$.
\end{lemma}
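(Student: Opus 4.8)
The plan is to argue by contradiction: assume $\psi_0$ is tame and satisfies $\RR$, and deduce that $G$ is $3$-colorable, contradicting $4$-criticality. By Observation~\ref{obs-cut} it suffices to show $\gamma\circ\psi_0$ extends to a $3$-coloring of $G_0$, and for this I would verify the hypotheses of Lemma~\ref{lemma-sufficient} with the standard assignment $\dual{d}$ and $\psi=\gamma\circ\psi_0$. All numerical conditions on $\dual{d}$ ($\dual{d}$ even, $3|\dual{d}$, $\dual{d}(F(G_0))=0$, $|\dual{d}(f)|\le|f|$) hold by definition of a standard assignment, and $\dual{d}(f_i)=\int_{C_i}\delta_{\vec{C},\psi}$ follows from the identity $\int_C\delta_{\vec{C},\psi_0}=\dual{d}(f_1)$ contained in ``$\psi_0$ satisfies $\RR$'', using that $\vec{C}$ is chosen compatibly with $\gamma$ (so $\gamma$ maps $C_1$ direction-preservingly and $C_2$ direction-reversingly onto $C$). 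The real content is thus to show $\gain{\dual{d},\psi}(T)\ge0$ for every constraint $T$ and $\gain{\dual{d},\psi}(X)>2$ for every $(C_1,C_2)$-connecting set $X$ of constraints.

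Two facts will be used throughout. First, by Theorem~\ref{thm-dvopek} the odd-length faces of $G_0$ other than $f_1,f_2$ form a set $O$ with $|O|\le4$ and $\dual{d}(O)=0$, so $|\dual{d}(Z)|\le 3\lfloor|O|/2\rfloor\le6$ for all $Z\subseteq O$; in particular $|\dual{d}(\dual{\inter}(R))|\le6$ for every generalized chord $R$ and $|\dual{d}(\dual{\inter}(K_Q))|\le6$ for every $(C_1,C_2)$-connector $Q$, since $f_1,f_2$ lie outside those regions. Second, $G_0$ has no $C_1$--$C_2$ path shorter than $|C|/2$ (as in the proof of Lemma~\ref{lemma-exd}), since the image in $G$ of such a path, completed by an arc of $C$, is non-contractible of length $\ge|C|$. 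Now a non-chord edge contributes exactly $2$. For a cycle $K$ edge-disjoint from $C_1\cup C_2$: if $\gamma(K)$ is a contractible cycle bounding a face $f$ then $\gain{\dual{d},\psi}(K)=|f|-|\dual{d}(f)|\ge0$; if $\gamma(K)$ is contractible bounding no face, Lemma~\ref{lemma-long}, the bound $|\dual{d}(\dual{\inter}(K))|\le6$, and a parity check on $|S(\dual{\inter}(K))|$ give $\gain{\dual{d},\psi}(K)\ge0$; and if $\gamma(K)$ is non-contractible or $K$ meets both $C_1$ and $C_2$ (whence $K$ contains two internally disjoint $C_1$--$C_2$ paths, so $|K|\ge|C|$), then $\gain{\dual{d},\psi}(K)\ge21-9>0$ as $\dual{\inter}(K)$ may now also contain $f_1$. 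For a $(C_1,C_2)$-connector $Q=P\cup P'$, each of $P,P'$ is a $C_1$--$C_2$ path so $|E(Q)|\ge|C|$; with $|\dual{d}(\dual{\inter}(K_Q))|\le6$ and $|\int_{B_1}\delta_{\vec{C},\psi}+\int_{B_2}\delta_{\vec{C},\psi}|\le 2|C|/5+4$ (tameness and Observation~\ref{lemma-tamevalues}) this gives $\gain{\dual{d},\psi}(Q)\ge|C|-6-(2|C|/5+4)=3|C|/5-10\ge3$ since the edge-width is at least $21$.

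The crux is the generalized chords. Let $R$ have base $B$ of length $t$ and $|E(R)|=k$. Then $t\le k$ (Lemma~\ref{lemma-ties}(a)); Lemma~\ref{lemma-long} applied to $\gamma(K_R)$ bounds $k+t$ below in terms of $|S(\dual{\inter}(R))|$; Observation~\ref{lemma-tamevalues} bounds $|\int_B\delta_{\vec{C},\psi}|$; and $k+t$ has the parity of the number of odd faces of $\dual{\inter}(R)$. Together with $|\dual{d}(\dual{\inter}(R))|\le6$ these settle every configuration except the tight ones, which are precisely those for which $\RR$ and the standard assignment were designed. When $\dual{\inter}(R)$ contains a two-element set $U$ of odd faces with $\dual{d}(U)\neq0$: if $t=k\le7$ then $U$ is strongly $(\le\!7)$-tied, so $\dual{d}(U)=0$ and $\gain{\dual{d},\psi}(R)=k-|\int_B\delta_{\vec{C},\psi}|\ge0$; if $t<k\le6$, Lemma~\ref{lemma-ties}(b) forces $(t,k)\in\{(3,5),(2,6),(4,6)\}$, and items (Ra)/(Rb) of $\RR$ force $\dual{d}(U)$ and $\int_B\delta_{\vec{C},\psi}$ to have opposite signs (this is exactly what the factor $(-1)^i$ and the ``up to reversal'' in the definition of $\RR$ arrange, once one tracks how $\gamma$ orients $B$), whence $|\dual{d}(U)+\int_B\delta_{\vec{C},\psi}|=6-|\int_B\delta_{\vec{C},\psi}|\le k$ and $\gain{\dual{d},\psi}(R)\ge0$; and if $k\ge7$ the bounds above already give $\gain{\dual{d},\psi}(R)\ge0$. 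The analogous tight case where $\dual{\inter}(R)$ is a single $5$-face reached by a base of length $2$ is handled by item (Rc). Finally, a generalized chord of $C_i$ meeting $C_{3-i}$ splits into two $C_1$--$C_2$ subpaths, so $|E(R)|\ge|C|$ and $\gain{\dual{d},\psi}(R)\ge|C|-6-(|C|/5+2)\ge9$.

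For a $(C_1,C_2)$-connecting set $X$: by the estimates above a connector, a cycle meeting both $C_1$ and $C_2$ or with non-contractible image, and a generalized chord of $C_i$ meeting $C_{3-i}$ each have $\gain{\dual{d},\psi}\ge3>2$, so if $X$ contains such a ``spanning'' constraint we are done. Otherwise every constraint of $X$ is short and one-sided; then, using that no non-chord edge joins a vertex of $C_1$ to a vertex of $C_2$ (its image would be a chord of $C$ at distance at most $1$, impossible since $G$ is simple and $C$ is shortest) and that $G_0$ has no short $C_1$--$C_2$ path, one checks that $\bigcup X$ cannot contain a $C_1$--$C_2$ path at all, contradicting the choice of $X$; hence $\gain{\dual{d},\psi}(X)>2$. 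Then Lemma~\ref{lemma-sufficient} applies, $\gamma\circ\psi_0$ extends to a $3$-coloring of $G_0$, $G$ is $3$-colorable, and we have our contradiction. I expect the main obstacle to be the generalized-chord case analysis of the third paragraph---in particular the sign bookkeeping confirming that satisfaction of $\RR$ yields exactly the cancellation needed in the tight cases, and checking, with the help of Lemmas~\ref{lemma-ties} and~\ref{lemma-bities}, that no tight configuration is missed---together with the verification in the last paragraph that a family of short, one-sided constraints cannot connect $C_1$ to $C_2$.
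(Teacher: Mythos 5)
The overall approach matches the paper exactly: verify the two hypotheses of Lemma~\ref{lemma-sufficient} for $\psi=\gamma\circ\psi_0$ and the standard assignment $\dual{d}$, using tameness, the structure of a standard assignment, Lemmas~\ref{lemma-long} and~\ref{lemma-ties}, and the satisfaction of $\RR$ exactly where the paper does. The paper does not prove the second hypothesis directly; rather, after establishing $\gain{\dual{d},\psi}(T)\ge 0$ for every constraint $T$, it invokes $4$-criticality to know $\psi$ does not extend, applies Lemma~\ref{lemma-sufficient} in contrapositive form to obtain a $(C_1,C_2)$-connecting set $X$ with $\gain{\dual{d},\psi}(X)\le 2$, and then derives a contradiction from the structure of $X$. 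The two formulations are logically equivalent, so the route is the same.

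However, your final paragraph has a genuine gap. You claim that if a $(C_1,C_2)$-connecting set $X$ contains no ``spanning'' constraint, then $\bigcup X$ contains no $C_1$--$C_2$ path. That is not true as stated: a long one-sided generalized chord (say of length $15$), or a long chain of constraints each of which individually touches at most one of $C_1,C_2$, can reach across the cylinder without any single constraint being a connector or meeting both boundary cycles. What is actually true is the conditional statement: \emph{if, in addition, every constraint of $X$ has slack at most $2$} (and, since slacks are non-negative and even, at most one of them has slack $2$), then the lengths of these constraints are bounded by your earlier case analysis, and only then can one bound the ``reach'' of $\bigcup X$ and conclude it cannot contain a path of length $\ge \lceil|C|/2\rceil\ge 11$. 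This is precisely why the paper first fixes a connecting $X$ with $\gain{\dual{d},\psi}(X)\le 2$ before reasoning about lengths and reach: the slack bound is what forces the constraints to be short. In your direct formulation you must explicitly split into ``$X$ contains a constraint with slack at least $4$'' (done), ``$X$ contains at least two constraints with slack $2$'' (done), and ``$X$ has total slack at most $2$'', and in the last case carry out the reach computation $\sum_{R\in X}\lfloor |R|/2\rfloor < \lceil|C|/2\rceil$ using the length bounds extracted from the slack classification. You acknowledge this verification is outstanding; the specific shape of the claim you wrote is the part that needs repair.

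A second, smaller issue: the generalized-chord analysis is sketched only for the case $|\dual{d}(\dual{\inter}(R))|=6$; the cases $|\dual{d}(\dual{\inter}(R))|\in\{0,3\}$ (the paper handles the $3$ case via item (Rc) of $\RR$ and Lemma~\ref{lemma-long}) must be treated as well, and in the $=6$ case you need the observation that a standard assignment forces $k\ge t+2$ whenever $k\le 7$ to make the tameness bound give $\gain{\dual{d},\psi}(R)\ge 0$ at $k=7$. These are fillable but not optional details.
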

\begin{proof}
Suppose for a contradiction that $\psi_0$ satisfies $\RR$.
Let $f_1$ and $f_2$ be the faces of $G_0$ bounded by $C_1$ and $C_2$.
By Theorem~\ref{thm-dvopek}, $G$ has at most four odd-length faces, and thus
$|\dual{d}(U)|\le 6$ for any $U\subseteq F(G_0)\setminus \{f_1,f_2\}$, and $|\dual{d}(U)|\le 9$ for any $U\subseteq F(G_0)$.
Let $\psi=\gamma\circ\psi_0$ be the $3$-coloring of $C_1\cup C_2$ corresponding to $\psi_0$,
and let $\delta=\delta_{\vec{C_1}\cup\vec{C_2},\psi}$ for arbitrary orientations of $C_1$ and $C_2$.
Let us discuss possible constraints.
\begin{itemize}
\item Let $K$ be a cycle in $G_0$ edge-disjoint from $C_1\cup C_2$.  If $K$ is $(f_1,f_2)$-non-contractible,
then $\gamma(K)$ is non-contractible, and thus $|K|\ge 21$ and $\gain{\dual{d},\psi}(K)=|K|-|\dual{d}(\dual{\inter}(K))|\ge 12$.
Suppose that $K$ is $(f_1,f_2)$-contractible.  If $|K|\le 7$, then by Lemma~\ref{lemma-long}
$\dual{\inter}(K)$ contains at most one odd-length face, and
$$\gain{\dual{d},\psi}(K)=|K|-|\dual{d}(\dual{\inter}(K))|=\begin{cases}|K|&\text{ if $|K|$ is even}\\|K|-3&\text{ if $|K|$ is odd.}\end{cases}$$
If $|K|\ge 8$, then $\gain{\dual{d},\psi}(K)=|K|-|\dual{d}(\dual{\inter}(K))|\ge |K|-6$.

We conclude
that $\gain{\dual{d},\psi}(K)\ge 2$, and $\gain{\dual{d},\psi}(K)\ge 4$ unless $K$ is $(f_1,f_2)$-contractible and either
$|K|=5$ and $\dual{\inter}(K)$ consists of a $5$-face, or $|K|=8$ and $\dual{\inter}(K)$ contains two odd-length faces.

\item Let $R$ be a $(C_1,C_2)$-connector consisting of paths $P_1$ and $P_2$.
Let $Q_1$ and $Q_2$ be subpaths of $C_1$ and $C_2$ directed so that $f_1$ and $f_2$ are to the right of them,
such that $K=Q_1\cup Q_2\cup P_1\cup P_2$ is a $(f_1,f_2)$-contractible cycle.
Note that there are two possible choices for $Q_1$ and $Q_2$, and we choose one where $|E(Q_1)|+|E(Q_2)|\le |C|$.
For $i\in\{1,2\}$, let $a_i=\int_{Q_i} \delta$.  Since $C$ is a shortest non-contractible cycle in $G$, we have $|E(P_1)|,|E(P_2)|\ge |C|/2$.
We have $\gain{\dual{d},\psi}(R)\ge |E(R)|-|a_1|-|a_2|-|\dual{\inter}(K)|\ge |C|-|a_1|-|a_2|-6$.
Since $\psi$ is tame, Observation~\ref{lemma-tamevalues} gives $|a_1|+|a_2|\le |E(Q_1)|/5+|E(Q_2)|/5+4\le |C|/5+4$, and thus
$\gain{\dual{d},\psi}(R)\ge \tfrac{4}{5}|C|-10>4$.

\item Let $R$ be a generalized chord of $C_i$ for some $i\in \{1,2\}$; by symmetry, we can assume $i=1$.
Let $Q$ be the base of $R$, and let $k=|E(R)|$, $t=|E(Q)|$ and $a=\int_Q \delta$.  Since $C$ is a shortest
non-contractible cycle, we have $k\ge t$ and $k\ge 2$.  Recall that $\gain{\dual{d},\psi}(R)=k-|a+\dual{d}(\dual{\inter}(R))|$.

\begin{itemize}
\item If $\dual{d}(\dual{\inter}(R))=0$, then $k$ and $t$ have the same parity, and by Observation~\ref{lemma-tamevalues},
we have $\gain{\dual{d},\psi}(R)\ge 0$ if $k=2$, $\gain{\dual{d},\psi}(R)\ge 2$ if
$k\in\{3,4\}$, and $\gain{\dual{d},\psi}(R)\ge 4$ if $k\ge 5$.

\item If $|\dual{d}(\dual{\inter}(R))|=3$, then by parity we have $k\ge t+1$, and since $G$ is triangle-free, $k\ge 3$.  If $k=3$, then
$R\cup Q$ is a $5$-cycle, and by Lemma~\ref{lemma-long} $R\cup Q$ bounds a $5$-face $f$.  Since $t=2$,
$(\gamma(Q),-\dual{d}(f))\in\RR$, and since $\psi$ satisfies $\RR$, $a$ and $\dual{d}(\dual{\inter}(R))$
do not have the same sign.
Consequently, $|a+\dual{d}(\dual{\inter}(R))|\le 3$ and $\gain{\dual{d},\psi}(R)\ge 0$.  

By Observation~\ref{lemma-tamevalues},
we have $\gain{\dual{d},\psi}(R)\ge 0$ if $k\in\{4,5\}$, $\gain{\dual{d},\psi}(R)\ge 2$ if $k\in\{6,7,8\}$, and
$\gain{\dual{d},\psi}(R)\ge 4$ if $k\ge 9$.

\item If $|\dual{d}(\dual{\inter}(R))|=6$, then $\dual{\inter}(R)$ contains exactly two odd-length faces $f_1$ and $f_2$ such that 
$\dual{d}(f_1)=\dual{d}(f_2)\in\{-3,3\}$.  Since $\dual{d}$ is a standard assignment, if $k\le 7$ then $k\ge t+2$.

If $k\le 5$, then by Lemma~\ref{lemma-ties}(b) we have $k=5$ and $t=3$.  Hence,
$(\gamma(Q),-\dual{d}(\dual{\inter}(R)))\in\RR$, and since $\psi$ satisfies $\RR$, $a$ and $\dual{d}(\dual{\inter}(R))$
have opposite signs.  Consequently $|a+\dual{d}(\dual{\inter}(R))|\le 5$ and $\gain{\dual{d},\psi}(R)\ge 0$.
If $k=6$, then an analogous argument gives $|a+\dual{d}(\dual{\inter}(R))|\le 6$ and $\gain{\dual{d},\psi}(R)\ge 0$.

By Observation~\ref{lemma-tamevalues},
we have $\gain{\dual{d},\psi}(R)\ge 0$ if $k\in\{7,8,9\}$, $\gain{\dual{d},\psi}(R)\ge 2$ if $k\in\{10,11,12\}$, and
$\gain{\dual{d},\psi}(R)\ge 4$ if $k\ge 13$.
\end{itemize}
\end{itemize}
Hence, $\gain{\dual{d},\psi}(R)\ge 0$ for every constraint $R$.  Since $G$ is $4$-critical,
$\psi_0$ does not extend to a $3$-coloring of $G$, and by Observation~\ref{obs-cut}, $\psi$ does not extend
to a $3$-coloring of $G_0$. By Lemma~\ref{lemma-sufficient}
we conclude that there exists a $(C_1,C_2)$-connecting set $X$ of constraints
such that $\gain{\dual{d},\psi}(X)\le 2$.  In particular, we have $\gain{\dual{d},\psi}(R)\le 2$ for every $R\in X$.
According to the preceding analysis, we conclude that $X$ contains for $i\in\{1,2\}$ either a non-chord edge $R_i$ with one
end in $C_i$ or a generalized chord $R_i$ of $C_i$, and in case $R_1$ and $R_2$ are generalized chords with
$\gain{\dual{d},\psi}(R_1)=\gain{\dual{d},\psi}(R_2)=0$, $X$ can additionally contain a non-chord edge $R_3$ or an $(f_1,f_2)$-contractible
cycle $R_3$ with $\gain{\dual{d},\psi}(R_3)=2$.
Note that $G_0$ contains a path from $C_1$ to $C_2$ of length at most $\sum_{R\in X} \lfloor |R|/2\rfloor$,
and by a straightforward case analysis using the description of constraints $R$ with $\gain{\dual{d},\psi}(R)\le 2$ we obtained above,
this path has length at most $10$.  However, since $C$ is a shortest non-contractible cycle in $G$, any such path must have length at
least $|C|/2>10$, which is a contradiction.
\end{proof}

We are now ready to prove that no $4$-critical triangle-free graph can be drawn in the torus with edge-width at least 21,
by constructing a $3$-coloring satisfying the request.
\begin{proof}[Proof of Lemma~\ref{lemma-ew}]
Suppose for a contradiction that $G$ is a $4$-critical triangle-free graph with a $2$-cell drawing in the torus of edge-width at least $21$,
let $C$ be a shortest non-contractible cycle in $G$.  Let $(G_0,\Sigma,C_1,C_2,\gamma)$ be obtained from $G$ by cutting along $C$.
Let $\vec{C}$ be a cyclic orientation of $C$ chosen so that the paths in $C_1$ for which $f_1$ is to their right
are mapped by $\gamma$ to paths along $\vec{C}$.

We say that an odd-length face $f\in F(G_0)\setminus\{f_1,f_2\}$
is \emph{isolated} if no two-element set containing $f$ is strongly $(\le\!7)$-tied to $C_1$ or $C_2$,
and that $f$ is \emph{aligned} if for some $i\in\{1,2\}$, $f$ is $7$-near to $C_i$ and $\dual{d}(f)=(-1)^i\cdot 3$.
Let $\dual{d}$ be a standard assignment of sources and sinks such that the number of aligned isolated faces is maximum.
Let $\RR$ be the $(G,C,\dual{d})$-request.

If $|C|$ is odd, let $\psi_0$ be a $3$-coloring of $C$ where vertices in order have colors
\begin{equation}\label{col-odd}
\mathbf{0, 1, 2}, 1, 2, \mathbf{1, 2, 0}, 2, 0, \mathbf{2, 0, 1}
\end{equation}
followed by $0, 1$ repeated $(|C|-13)/2$ times; if $|C|$ is even, let $\psi_0$ be a $3$-coloring
of $C$ where vertices in order have colors
\begin{equation}\label{col-even}
\mathbf{0, 1, 2}, 1, \mathbf{2, 1, 0}, 1
\end{equation}
followed by $0, 1$ repeated $(|C|-8)/2$ times.  The boldface emphasizes the places where $\delta_{\vec{C},\psi_0}(e_1)=\delta_{\vec{C},\psi_0}(e_2)$
for two consecutive edges $e_1$ and $e_2$ of $C$; clearly, $\psi_0$ is tame and $\int_C \delta_{\vec{C},\psi_0}=\dual{d}(f_1)$.
For $k\in \{1,\ldots, |C|-1\}$, let $\psi_k$ denote
the coloring of $C$ obtained by rotating $\psi_0$ on $C$ by $k$ vertices.  We say the index $k$ is \emph{killed} by $(Q,s)\in\RR$
if $s\cdot\int_Q \delta_{\vec{C},\psi_k}<0$.  By Lemma~\ref{lemma-satisfy}, $\RR$ is not satisfied, and thus every index $k\in\{0,\ldots,|C|-1\}$
is killed by some element of $\RR$.

Let $U^+$ and $U^-$ be the set of faces of $F(G_0)\setminus \{f_1,f_2\}$ to that $\dual{d}$ assigns the value $3$ and $-3$, respectively.
Recall that by Theorem~\ref{thm-dvopek}, $G$ has at most $4$ odd-length faces, and thus $|U^+|=|U^-|\le 2$.
For $i\in \{1,2\}$, let $U^+_i$ denote those of the faces in $U^+$ that are $7$-near to $C_i$; $U^-_i$ is defined analogously.
Let us now discuss the elements of $\RR$ arising from $U^+$.
Since $C$ is a shortest non-contractible cycle and $|C|\ge 21$, the sets $U^+_1$ and $U^+_2$ are disjoint.
\begin{itemize}
\item If $|U^+_1|\le 1$ and $|U^+_2|\le 1$, then $U^+$ only contributes to $\RR$ by (Rc): $U^+_1$ may contribute $(Q_1,-3)$
and $U^+_2$ may contribute $(Q_2,3)$ for some subpaths $Q_1,Q_2\subset C$ of length two.  In total the contributed elements
kill at most $3$ indices if $|C|$ is odd and at most two indices if $|C|$ is even.
\item Suppose $|U^+_1|=2$ (and thus $U^+_2=\emptyset$).  Note that $U^+_1$ is $7$-loose with respect to $C_1$, since $\dual{d}$ is
a standard assignment of sources and sinks.
\begin{itemize}
\item If $U^+_1$ is $(3,5)$-tied to a subpath $Q$ of $C_1$, then $U^+$ contributes to $\RR$ the element $(Q,-6)$, and
by Lemma~\ref{lemma-ties}(c) can additionally contribute only elements $(Q',s)$ where $s\in\{-3,-6\}$ and $Q'$ is either
a length-two subpath or a length-four superpath of $Q$; and observe that each index killed by $(Q',s)$ is also killed by $(Q,s)$.
Consequently, the elements contributed by $U^+$ kill $(|C|+9)/2$ indices if $|C|$ is odd and $|C|/2$ indices if $|C|$ is even.
\item If $U^+_1$ is not $(3,5)$-tied to $C_1$, but is $(4,6)$-tied to a subpath $Q$ of $C_1$, then
$U^+$ contributes to $\RR$ the element $(Q,-6)$, and by Lemma~\ref{lemma-ties}(b) and (c) can additionally contribute
only elements $(Q',s)$ where $s\in\{-3,-6\}$ and $Q'$ is a length-two subpath of $Q$.  Note that each index killed by $(Q,-6)$
is also killed by $(Q',-3)$ for some length-two subpath of $Q$, and by considering three consecutive length-two subpaths
of $C$, we conclude the elements contributed by $U^+$ kill at most $9$ indices if $|C|$ is odd and at most $3$ indices if $|C|$ is even.
\item If $U^+_1$ is neither $(3,5)$-tied nor $(4,6)$-tied to $C_1$, then by Lemma~\ref{lemma-ties}(b) and (c) contributes
to $\RR$ either one element $(Q_1,-6)$ for a length-two subpath $Q_1$, or (by (Rc)) two elements $(Q_1,-3)$ and $(Q_2,-3)$
for length-two subpaths $Q_1$ and $Q_2$.  Consequently, the elements contributed by $U^+$ kill at most $6$ indices if $|C|$ is odd
and at most $2$ indices if $|C|$ is even.
\end{itemize}
\item The case that $|U^+_2|=2$ is symmetric, but the signs are switched; hence, in the three considered subcases,
the elements contributed by $U^+$ kill
\begin{itemize}
\item $(|C|-9)/2$ indices if $|C|$ is odd and $|C|/2$ indices if $|C|$ is even,
\item no indices if $|C|$ is odd and at most $3$ indices if $|C|$ is even, and
\item no indices if $|C|$ is odd and at most $2$ indices if $|C|$ is even, respectively.
\end{itemize}
\end{itemize}
The situation for $U^-$ is symmetric, up to switching of signs.  Let us first consider the case that $|C|$ is even.  Since all indices
are killed and $|C|/2>3$, this implies each of $U^+$ and $U^-$ is $(3,5)$ tied to one of $C_1$ and $C_2$, and by the preceding
analysis, each index is killed by exactly one of elements $(Q^+,s^+)$ and $(Q^-,s^-)$ of $\RR$, where $Q^+$ and $Q^-$ are length-three subpaths of $C$
and $s^+,s^-\in \{-6,6\}$.  Without loss of generality, we can assume that the index $0$ is killed by $(Q^+,s^+)$ and in $\psi_0$, the path $Q^+$
covers the first four colors of (\ref{col-even}); all other cases are symmetric.  Then also $1$, $|C|-1$, and $|C|-2$ are killed by $(Q^+,s^+)$,
and thus none of them is killed by $(Q^-,s^-)$.  This is only possible if $Q^-$ covers the last four colors of (\ref{col-even}).
However, then $2$ is killed by neither of the elements, which is a contradiction.

Therefore, $|C|$ is odd.  We can by symmetry assume that at least $|C|/2>9$ indices are killed by elements contributed by $U^+$,
and thus $|U^+|=2$ and $U^+$ is $(3,5)$-tied to $C_1$.  If $|U^-_2|=2$, then since $\dual{d}$ is a standard assignment of sources and sinks,
all elements of $U^+\cup U^-$ are isolated but not aligned, and there exists a standard assignment of sources and sinks
with more isolated aligned faces, contradicting the choice of $\dual{d}$.  Hence, $|U^-_2|\le 1$.  Since all indices are killed,
the elements contributed by $U^-$ kill at least $(|C|-9)/2>3$ indices.  It follows that $|U^-_1|=2$ and $U^-$
is $(3,5)$-tied to $C_1$.  By the preceding analysis, each index is killed by exactly one of elements $(Q^+,-6)$ and $(Q^-,6)$ of $\RR$,
where $Q^+$ and $Q^-$ are length-three subpaths of $C$; and by Lemma~\ref{lemma-bities}, $Q^+$ and $Q^-$ are edge-disjoint.
Without loss of generality, we can assume that in $\psi_0$, the path $Q^+$
starts two vertices before the first color of (\ref{col-odd}), and thus $(Q^+,-6)$ kills $0$, $1$, $2$, and $3$.  Hence, $(Q^-,6)$ does not kill
$0$, $1$, $2$, and $3$, which is only possible if $Q^-$ is shifted by $5$ or $10$ vertices to the right from $Q^+$ in (\ref{col-odd}).
However, then $|C|-2$ is not killed by either of the elements, which is a contradiction.
\end{proof}

\bibliographystyle{siam}
\bibliography{../data.bib}

\end{document}